\theoremstyle{plain}
\newtheorem{theorem}{Theorem}[section]
\newtheorem{lemma}[theorem]{Lemma}
\newtheorem{proposition}[theorem]{Proposition}
\theoremstyle{definition}
\newtheorem{definition}[theorem]{Definition}
\theoremstyle{remark}
\newtheorem{remark}{Remark}
\newcommand\loc{{\operatorname{loc}}}
\title{Large Deviations Principle for the tagged empirical field of a general {interacting gas}}
\author{David Padilla-Garza }
\begin{document}

\maketitle

\begin{abstract}
    This paper deals with rare events in a general {interacting gas} at high temperature, by means of Large Deviations Principles. The main result is an LDP for the tagged empirical field, which features the competition of an energy term and an entropy term. The approach to proving this Large Deviations Principle is to first deduce one for the tagged empirical field of non-interacting particles at high temperature, and upgrade that result to interacting particle systems.   
\end{abstract}

\section{Introduction and motivation}

Consider $N$ particles that interact via a pair-wise interaction and are confined by an external potential. This is modeled by the Hamiltonian
\begin{equation}
\label{eq:hamilton}
    \mathcal{H}_{N}(X_{N}) = \sum_{i \neq j} g(x_{i}- x_{j}) + N \sum_{i=1}^{N} V(x_{i}),
\end{equation}
where $X_{N} \in \mathbb{R}^{d \times N},$ with $X_{N} = (x_{1},...x_{N})$, $V:  \mathbb{R}^{d } \to \mathbb{R}$ is the confining potential, and $g:  \mathbb{R}^{d}\to \mathbb{R}$ is the pair-wise interaction. In this paper, we will work mainly with particles in the hypercube $\mathbb{Q}^{d} = \left[ -\frac{T}{2}, \frac{T}{2} \right]^{d} $. Given that it is useful to work with point configurations in $\mathbb{R}^{d}$, the way to work in the hypercube will be to assume that the confining potential is infinite outside of  $\mathbb{Q}^{d}$. {The size of the cube $T$ will play no role except to affect some constants. However, we keep the parameter to emphasize that we may treat a cube of arbitrary size.}  

Consider the Gibbs measure $\mathbf{P}_{N,\beta}$ associated to $\mathcal{H}_{N}:$
\begin{equation}
\label{Gibbs}
    \mathrm d  \mathbf{P}_{N,\beta} (X_{N}) = \frac{1}{Z_{N, \beta}} \exp\left( - \beta    \mathcal{H}_{N}(X_{N}) \right) \, \mathrm d X_{N},
\end{equation}
where $\beta = \beta(N)$ is the inverse temperature, and
\begin{equation}
    Z_{N, \beta} = \int_{\mathbb{R}^{d \times N}} \exp\left( - \beta    \mathcal{H}_{N}(X_{N}) \right) \, \mathrm d X_{N}
\end{equation}
is the partition function. A system governed by \eqref{Gibbs} will be called a general interacting gas, or interacting particle system. 

A very frequent form of the pair-wise interaction $g$ is given by
\begin{equation}
\label{Coulombcase}
\begin{cases}
g(x)=\frac{1}{|x|^{d-2}} \text{ if } d \geq 3 \\
g(x)=-\log(|x|) \text{ if } d = 1,2
\end{cases}
\end{equation}
We will refer to this setting as the Coulomb case for $d \geq 2$, and log case for $d=1$. Another frequent form of $g$ is given by 
\begin{equation}\label{Rieszcase}
    g(x) = \frac{1}{|x|^{d-2s}},
\end{equation}
with $s \in \left(0,\min \left\{1, \frac{d}{2}\right\}\right)$. We will refer to this setting as the Riesz case. Coulomb and Riesz gases are a classical field with applications in Statistical Mechanics \cite{jancovici1993large, girvin2005introduction, stormer1999fractional}, Random Matrix Theory \cite{bourgade2014universality, bourgade2014local, hardy2021clt, lambert2019quantitative}, and Mathematical Physics \cite{berman2014determinantal, rougerie2015incompressibility}, among other fields. Although this paper does not deal with Coulomb or Riesz gasses, we mention them because many results for particle systems driven by the Gibbs measure (equation \eqref{Gibbs}) are for these kernels. Later on we will discuss some of them.

As mentioned earlier, this paper will not deal with a Coulomb or Riesz interaction. Instead, it will deal with a general interacting gas, a system of particles driven by the Gibbs measure (equation \eqref{Gibbs}) for general $g$. The study of a general {interacting gas} goes back decades \cite{georgii2011gibbs,ruelle1967variational,ruelle1968statistical}, and continues to attract attention \cite{garcia2019large, lambert2021poisson, chafai2014first,garcia2024generalized}. It is also linked to to AI and machine learning, more specifically to neural networks. Neural networks are common ways of accurately representing high-dimensional functions: given $f$ a high-dimensional function, it may be represented as
\begin{equation}
    f (x) := \lim_{n \to \infty} \frac{1}{n} \sum_{i=1}^{n} {\varphi}_{i} (x, \theta_{i}),
\end{equation}
for given basis functions ${\varphi}_{i} (x, \theta_{i})$, depending on parameters $\theta_{i}$. stochastic gradient descent (SGD) is one of the most popular algorithms to accurately tune the parameters $\theta_{i}$, but even despite its frequent use very few rigorous results for convergence existed for SGD until recently. This gap in the literature was addressed by \cite{rotskoff2018neural,rotskoff2018parameters,rotskoff2022trainability,wen2024coupling}, by modeling the evolution of parameters $\theta_{i}$ driven by the SGD algorithm as an interacting particle system, driven by a Stochastic PDE (SPDE). One advantage of this approach is that it allows the authors to prove a rate of convergence of order $O(n^{-1})$ to the mean-field limit. Additionally, the energy landscape for the empirical measure becomes convex, making it more amenable to the SGD algorithm. The precise form of the interaction kernel depends on the error between the measurements and the approximating function, and on the functions ${\varphi}_{i}$, but it is not given by a Coulomb or Riesz kernel except in very specific cases. Hence, as examples of general interactions, we may consider those that arise from modelling parameters as particles. This approach is linked to high dimensional particle systems as well (large $d$), since the function $f$ is high dimensional. 

Given that there are $N$ particles in space, and that typically they are confined to a compact set by the potential, the distance between the particles is of order $N^{-\frac{1}{d}}$. This means that the scale $N^{\frac{1}{d}}$ is special and will be called the \emph{microscopic scale}. We will call the original length scale \emph{macroscopic}. Any length scale which is between these two will be called \emph{mesoscopic}. 

At a macroscopic scale, the system is well-described by the empirical measure. Given $X_{N} = (x_{1}, ... x_{N}),$ the empirical measure is defined as 
\begin{equation}
\label{empm}
    {\rm emp}_{N} (X_{N}) := \frac{1}{N} \sum_{i=1}^{N} \delta_{x_{i}}.
\end{equation}
The behavior of systems governed by \eqref{Gibbs} can be significantly different depending on the order of magnitude of $\beta$. We will call the regime $\beta = \frac{\theta}{N}$ (with $\theta$ constant) the \emph{high-temperature} regime. In the regime $\frac{1}{N} \ll \beta$, the empirical measure converges (a.s. under the Gibbs measure) to a probability measure that typically has compact support, and is characterized by minimizing the mean-field limit of the Hamiltonian. This probability measure is called the equilibrium measure. In the high-temperature regime, however, this does not happen. Instead, the empirical measure converges (a.s. under the Gibbs measure) to a measure that is positive in the set in which $V$ is finite (in particular, positive a.e. if $V$ is finite a.e.). 

In the Coulomb or Riesz setting, we will call the regime $\beta = \theta N^{\frac{2s - d}{d}}$ the \emph{{low temperature}} regime. The reason for identifying this temperature scaling is that in the low-temperature regime, we observe structure at the microscopic level (in the sense that the tagged empirical field converges to a limit that is not Poisson, see \cite{leble2017large}), whereas at higher temperatures we don't (in the sense that the limit of the tagged empirical field is Poisson, see Proposition \ref{Prop:rieszmedtemp}). 

At a microscopic scale, the system is well-described by the local point process, which is defined (given a fixed $x \in \mathbb{R}^{d}$) as
\begin{equation}
    \sum_{i=1}^{N} \delta_{N^{\frac{1}{d}} (x_{i} - x)}.
\end{equation}
This object is a positive, discrete measure of mass $N$ on $\mathbb{R}^{d}$. Alternatively, the local point process may be thought of as an element of $\mathbb{R}^{d \times N}$, defined as $N^{\frac{1}{d}} \theta_{x} X_{N}$, where $\theta_{x} $ denotes the element-wise translation by the vector $x$. This object is often hard to study analytically (even in the non-interacting case), so we may define a less fine observable by averaging the local point process over a compact subset $\Omega \subset \mathbb{R}^{d}$:
\begin{equation}
\label{eq:nontagged}
   \frac{1}{|\Omega|} \int_{\Omega} \delta_{\left(N^{\frac{1}{d}} \theta_{x} X_{N}\right)}  \, \mathrm d x.
\end{equation}
This is called the empirical field, and it is a probability measure on the space of point configurations. A more precise observable, which is still not as precise as the local point process, is the tagged empirical field. It is given by averaging the local point process while keeping track of the blow-up point. The tagged empirical field is defined as 
\begin{equation}
    \frac{1}{|\Omega|} \int_{\Omega} \delta_{\left(x, N^{\frac{1}{d}} \theta_{x} X_{N}\right)}  \, \mathrm d x.
\end{equation}
The tagged empirical field yields a measure on the cross product of $\Omega$ and the space of point configurations. The first marginal of this measure is the normalized Lebesgue measure on $\Omega$, and the second marginal is the empirical field, given by equation \eqref{eq:nontagged}. 

The main goal of this paper is to derive a Large Deviations Principle (LDP) for the tagged empirical field at high temperature ($\beta = \frac{1}{N}$) in the setting of a general interaction. This problem is motivated from several angles:
\begin{itemize}
    \item[1.] It was proved in \cite{lambert2021poisson} that the local point process converges to a Poisson Point Process in the high-temperature regime for a very general class of interactions. A natural question is then to quantify this convergence by means of an LDP. {Ideally, we would like to understand rare events in the local point process}. Still, as mentioned before, this is a very complicated problem, so we can quantify this convergence with an LDP for the tagged empirical field. 
    \item[2.] In, \cite{leble2017large} the authors treat the Riesz and Coulomb cases at {low temperature} ($\beta = \theta N^{\frac{2s - d}{d}}$) \footnote{In \cite{leble2017large}, different units and also different notations are used. Hence the $\beta$ in \cite{leble2017large} differs from the $\beta$ in this paper by a power on $N$}. They prove that, in this regime, the tagged empirical field satisfies an LDP in $\mathcal{P}(\Sigma \times {\rm Config})$ (where $\Sigma$ denotes the support of the equilibrium measure) at speed $N$ with rate function
\begin{equation}
    \mathcal{F}( \overline{\mathbf{P}}) = \begin{cases}
       \mathcal{J}( \overline{\mathbf{P}})  - \inf_{\overline{\mathbf{P}}^* \in \mathcal{P}(\Sigma \times {\rm Config}) } \mathcal{J}( \overline{\mathbf{P}}^*) \ \ \ &\textrm{if}\ \ \  \overline{\mathbf{P}} \in \mathcal{P}_{s,1}(\Sigma \times {\rm Config})\\
       \infty \ \ \ \ \ \ \ \ \ \ \ \ \ \  \ \ \ \ \ \ \ \ \ \ \ \ &\textrm{if}\ \ \  \overline{\mathbf{P}} \notin \mathcal{P}_{s,1}(\Sigma \times {\rm Config})
    \end{cases} 
\end{equation}
with 
\begin{equation}
     \mathcal{J}( \overline{\mathbf{P}}) = \theta \overline{\mathbb{W}}( \overline{\mathbf{P}}, \mu_{V}) + \overline{\rm Ent}[ \overline{\mathbf{P}} | \overline{\mathbf{\Pi}}^{1}].
\end{equation}
The term $\overline{\mathbb{W}}$ is energy-derived and corresponds to the renormalized energy of point processes, see \cite{leble2017large} for an exact definition. The term $\overline{\rm Ent}$ is entropy derived, see Section \ref{sect:prelims} for an exact definition. A natural question is then to understand the behavior of the system in the limit as $\theta$ tends to $0$. We solve this question for general interactions in the regime $\beta = \frac{1}{N}$, and also for Riesz interactions in the regime $\frac{1}{N} \ll \beta \ll N^{\frac{2s - d}{d}} $. A natural expectation is that the rate function consists of dropping the energy term and keeping only the entropy term. This expectation turns out to be close to correct in the regime $\frac{1}{N}\ll \beta \ll N^{\frac{2s - d}{d}} $ for Riesz interactions, but completely wrong for the regime $\beta = \frac{1}{N}$ and general interactions. 

    \item[3.] There has been considerable interest in LDPs for particle systems. An LDP for the empirical measure (in various temperature regimes) has been derived both in the Coulomb setting \cite{arous1997large,petz1998logarithmic} and in the general setting \cite{chafai2014first,garcia2019large}. On the other hand, \cite{leble2017large,leble2017local,armstrong2021local} derive an LDP for the tagged empirical field in the {low temperature} regimes. However, an LDP for the tagged empirical field outside the {low temperature} regimes has been absent from the literature. This paper fills this gap. 
\end{itemize}

\begin{remark}
    Throughout the paper, we will commit the abuse of notation by not distinguishing between a measure and its density. 
\end{remark}

\section{Setting and main definitions}

This section will introduce the objects that the main theorem deals with. Most of the section discusses the tagged empirical field and related objects. 

We start with definitions related to energy.

\begin{definition}
We denote the self-interaction of a measure $\mu$ by $\mathcal{E}(\mu)$:
\begin{equation}
    \mathcal{E} (\mu) = \int_{\mathbb{R}^{d} \times \mathbb{R}^{d}} g(x-y) \, \mathrm d  \mu \otimes \mu(x,y).
\end{equation}

We denote the mean field limit of $\mathcal{H}_{N},$ by $\mathcal{E}_{V}:$
\begin{equation}\label{meanfieldlimit}
    \mathcal{E}_{V} (\mu) =  \mathcal{E} (\mu) + \int_{\mathbb{R}^{d}} V \, \mathrm d \mu. 
\end{equation}

We also introduce the thermal energy $ \mathcal{E}_{V}^{\theta}:$
\begin{equation}\label{thermallimit}
     \mathcal{E}_{V}^{\theta} (\mu) =  \mathcal{E}_{V} (\mu) + \frac{1}{ \theta} {\rm ent}[\mu], 
\end{equation}
with
\begin{equation}
{\rm ent}[\mu ]=
    \begin{cases}
    \int_{\mathbb{R}^{d}}  \log  ({ \mathrm d\mu})  \, \mathrm  d{\mu} \quad  \text{ if } \mu \text{ is absolutely continuous w.r.t. Lebesgue measure} \\
    \infty\quad  \text{ otherwise}
    \end{cases}
\end{equation} 
\end{definition}

\begin{definition}
We denote by $\mathcal{P}(\mathbb{R}^{d})$ (respectively $\mathcal{P}(\mathbb{Q}^{d})$) the set of probability measures on $\mathbb{R}^{d}$ (respectively on $\mathbb{Q}^{d}$ ).

We denote by $\mu_{\theta}$ the minimizer of $ \mathcal{E}_{V}^{\theta}$ in $\mathcal{P}(\mathbb{R}^{d})$:
\begin{equation}
\label{def:theqmeas}
    \mu_{\theta}:= {\rm argmin}_{\mu \in \mathcal{P}(\mathbb{R}^{d})}  \mathcal{E}_{V}^{\theta} (\mu).
\end{equation}
We will refer to $\mu_{\theta}$ as the thermal equilibrium measure, see section \ref{thermeqmeas} for existence, uniqueness, and basic properties.
\end{definition}

We now introduce a few basic operations and sets that will be relevant later on in the paper. 

\begin{definition}
Throughout this section and throughout the paper, we will use the notation $\theta_{\tau}$ for the translation by $\tau$:
\begin{equation}
 \theta_{\tau}(x) = x+\tau.   
\end{equation}

We will also use the notation $\square_{R}(x)$ for the square of side $R$ and center $x$:
\begin{equation}
    \square_{R}(x) = \left[ x - \frac{R}{2}, x + \frac{R}{2} \right]^{d}.
\end{equation}
We will use the notation
\begin{equation}
    \square_{R} =  \square_{R}(0).
\end{equation}

\end{definition}

We now introduce the main object that this paper deals with: the tagged empirical field. 

\begin{definition}\label{def:tagempfiel}
Given a bounded measurable set $\Omega \subset \mathbb{R}^{d},$ and $X_{N} \in \mathbb{R}^{d\times N}$, we define the \emph{tagged empirical field}
\begin{equation}
\label{def:eqPN}
     \overline{\mathbf{P}}_{N}(X_{N}) = \frac{1}{|\Omega|} \int_{\Omega} \delta_{\left(x, \theta_{N^{\frac{1}{d}}x} \cdot X_{N}'\right)} \, \mathrm d x,
\end{equation}
where $X_{N}' = N^{\frac{1}{d}} X_{N}$, $\theta_{\tau}$ denotes the element-wise translation by $\tau$, and $|\Omega|$ denotes the Lebesgue measure of $\Omega$. 
\end{definition}

Having defined the tagged empirical field, we identify some topological spaces of interest and state a few foundational results. This is needed in order to identify the topology of the LDP. 

\begin{definition}
\label{def:distconfig}
Given an open set $A \subset \mathbb{R}^{d}$, we define ${\rm Config}(A)$ to be the set of locally finite points configurations on $A$. Equivalently, ${\rm Config}(A)$ can be thought of as the set of non-negative, purely atomic Radon measures on $A$ giving an integer mass to singletons. Given a measurable set $B \subset A$, and $\mathcal{C} \in {\rm Config}(A)$ we denote by $|\mathcal{C}|(B)$ the number of points of $\mathcal{C}$ in $B$. 

We denote ${\rm Config}:= {\rm Config}(\mathbb{R}^{d})$. 

We endow ${\rm Config}$ with the topology induced by the topology
of weak convergence of Radon measures.

We define the following distance on ${\rm Config}$:
\begin{equation}
\label{def:dist}
    d_{\rm Config}(\mathcal{C}_{1}, \mathcal{C}_{2}) = \sum_{k=1}^{\infty} \frac{1}{2^{k}} \sup_{f \in {\rm Lip}_{1}(\mathbb{R}^{d})} \frac{\int_{\square_{K}} f \, \mathrm d \left(  \mathcal{C}_{1} - \mathcal{C}_{2} \right)}{|\mathcal{C}_{1}|(\square_{k}) + |\mathcal{C}_{2}|(\square_{k})},
\end{equation}
where ${\rm Lip}_{1}(\mathbb{R}^{d})$ denotes the set of Lipschitz functions on $\mathbb{R}^{d}$ with Lipschitz constant $1$ and such that ${\rm sup}\{|f|\} \leq 1$.
\end{definition}

The following Lemma establishes some basic properties about ${\rm Config}$.

\begin{lemma}
\begin{itemize}
    \item The topological space ${\rm Config}$ is Polish.
    \item The distance $ d_{\rm Config}$ is compatible with the topology induced by the topology
of weak convergence of Radon measures.
\end{itemize}
\end{lemma}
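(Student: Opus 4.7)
The plan is to establish the two statements in tandem by verifying that the explicit distance $d_{\rm Config}$ of \eqref{def:dist} is itself a complete, separable metric generating the vague/weak topology on ${\rm Config}$. With the convention $0/0 = 0$ on summands where both configurations are empty on $\square_k$, the series defining $d_{\rm Config}$ converges absolutely thanks to the $2^{-k}$ weights, so $d_{\rm Config}$ is well-defined; symmetry and the triangle inequality are routine after clearing denominators. Polishness and compatibility will then both follow.

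For compatibility of the metric with vague convergence, I would handle the forward direction by dominated convergence of the defining series: if $\mathcal{C}_n \to \mathcal{C}$ vaguely, testing against a Lipschitz approximation of $\mathbf{1}_{\square_k}$ shows that $|\mathcal{C}_n|(\square_k)$ is bounded, so each $k$-th summand is dominated by $2^{-k}$ uniformly in $n$ while tending to $0$ pointwise in $k$. For the converse, any $f \in C(\mathbb{T}^d)$ can be approximated in $L^\infty$ by a scalar multiple of an element of ${\rm Lip}_1(\mathbb{T}^d)$; combining this with a preliminary bound on $|\mathcal{C}_n|(\square_k)$ extracted directly from $d_{\rm Config}(\mathcal{C}_n, \mathcal{C}) \to 0$ (again by testing against Lipschitz approximations of indicators), one recovers $\int f \, \mathrm d \mathcal{C}_n \to \int f \, \mathrm d \mathcal{C}$.

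The Polish property then reduces to separability and completeness of $d_{\rm Config}$. Separability is witnessed by the countable dense subset of finite configurations whose points lie in $\mathbb{Q}^d \cap \mathbb{T}^d$ with arbitrary integer multiplicities. For completeness, a $d_{\rm Config}$-Cauchy sequence has, for each fixed $k$, a uniformly bounded number of points in $\square_k$; a diagonal argument extracts a subsequential vague limit in ${\rm Config}$, and the Cauchy property upgrades this to convergence of the full sequence to a locally finite point measure. Alternatively, the Polish structure can be cited from Kallenberg's classical result on the space of locally finite counting measures on a Polish space, leaving only the compatibility of $d_{\rm Config}$ with the canonical topology to verify.

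The main technical obstacle I expect is the mass-dependent normalisation $|\mathcal{C}_1|(\square_k) + |\mathcal{C}_2|(\square_k)$ in the denominator: it is neither bilinear nor bounded away from $0$, so small values of $d_{\rm Config}$ do not, at first glance, force closeness in total variation on $\square_k$. The delicate point is to show that $d_{\rm Config}$-smallness already pins down the masses on each cube, e.g.\ by testing against Lipschitz-$1$ functions that are approximately constant equal to $1$ on $\square_k$, so that the normalisation cannot hide macroscopic discrepancies between configurations. Once this is in hand, both the forward comparison with vague convergence and the completeness argument go through without further friction.
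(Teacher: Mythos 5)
The paper gives no proof of its own---it simply cites \cite{leble2017large}, Lemma~2.1---so your sketch is necessarily a different route: it reconstructs from first principles the argument one would find in that reference (or in a standard text on point processes). The overall plan is sound, and your observation that Polishness can either be verified directly or cited from Kallenberg, leaving only compatibility of $d_{\rm Config}$ to check by hand, is the right division of labour. Two remarks. First, since the ambient space here is the compact torus $\mathbb{T}^{d}$, every element of ${\rm Config}$ is a \emph{finite} point measure, and for $k>T$ the cube $\square_{k}$ already contains all of $\mathbb{T}^{d}$; the geometric-series structure of $d_{\rm Config}$ is inherited from the $\mathbb{R}^{d}$ setting of \cite{leble2017large} and effectively collapses to a single normalised bounded-Lipschitz comparison, so several of the diagonal and localisation steps you invoke simplify considerably. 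Second, your claim that the triangle inequality is ``routine after clearing denominators'' undersells the very normalisation issue you later flag as the main technical obstacle: because the denominator $|\mathcal{C}_{1}|(\square_{k})+|\mathcal{C}_{2}|(\square_{k})$ depends on the pair being compared, an intermediate configuration $\mathcal{C}_{2}$ of very different mass changes the two denominators on the right-hand side of the putative triangle inequality independently, and the inequality is not obtained by merely clearing denominators. It does hold, but one must combine the triangle inequality for the unnormalised quantity $\sup_{f}\int_{\square_{k}}f\,\mathrm{d}(\mathcal{C}_{1}-\mathcal{C}_{2})$ with the elementary lower bound obtained by testing against constant functions, namely $\sup_{f}\int_{\square_{k}}f\,\mathrm{d}(\mathcal{C}_{1}-\mathcal{C}_{2})\geq\bigl||\mathcal{C}_{1}|(\square_{k})-|\mathcal{C}_{2}|(\square_{k})\bigr|$, which prevents the normalised terms from collapsing when $\mathcal{C}_{2}$ is heavy. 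Your remedy for the mass-control issue---testing against a Lipschitz approximation of the constant function $1$ on the cube---is exactly the right tool, and it should be advertised as doing double duty: it both pins down the masses for the completeness and compatibility arguments and underwrites the triangle inequality.
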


\begin{proof}
See for example \cite{leble2017large}, Lemma 2.1. 
\end{proof}

\begin{definition}
\label{def:disttagged}
Given a set $\Omega \subset \mathbb{R}^{d}$, we denote by $\mathcal{P}({\rm Config})$ the set of probability measures on ${\rm Config}$, and by $\mathcal{P}(\Omega \times {\rm Config})$ the set of probability measures on $\Omega \times {\rm Config}$. An element of $\mathcal{P}(\Omega \times {\rm Config})$ will be called a tagged point process. We endow $\mathcal{P}(\Omega \times {\rm Config})$ with the distance
\begin{equation}
\label{def:dist2}
    d_{\mathcal{P}(\Omega \times {\rm Config})} ( \overline{\mathbf{P}}_{1}, \overline{\mathbf{P}}_{2}) := \sup_{F \in {\rm Lip}_{1}(\Omega \times {\rm Config})} \int F \, \mathrm d ( \overline{\mathbf{P}}_{1} - \overline{\mathbf{P}}_{2}),
\end{equation}
where ${\rm Lip}_{1}(\Omega \times {\rm Config})$ denotes the space of Lipschitz functions on the space $(\Omega \times {\rm Config})$ with Lipschitz constant $1$ and such that $\mathrm{sup} |F| \leq 1$. The distance \eqref{def:dist2} metrizes the topology of weak convergence on $\mathcal{P}(\Omega \times {\rm Config})$ (see \cite{leble2017large}). 
\end{definition}

Now that we have defined the object that the main theorem deals with, and that we have defined a topology on the subspace in which it lives, we turn to define important quantities associated with tagged empirical fields. These are necessary for defining the rate function, and the space on which the LDP is proved. 

\begin{definition}
\label{def:disintegration}
Given $ \overline{\mathbf{P}} \in \mathcal{P}(\Omega \times {\rm Config})$, we define the disintegration measure $ \overline{\mathbf{P}}^{x}$, which for each $x \in \Omega$ is an element of $\mathcal{P}({\rm Config})$, characterized by the requirement that for any $F \in C^{0}(\Omega \times {\rm Config})$ we have
\begin{equation}
    \mathbf{E}_{ \overline{\mathbf{P}}}[F] = \frac{1}{|\Omega|} \int_{\Omega}  \mathbf{E}_{ \overline{\mathbf{P}}^{x}}[F(x, \cdot)] \, \mathrm d x.
\end{equation}

See \cite{ambrosio2005gradient} for the existence and uniqueness of the disintegration measure. 
\end{definition}

\begin{definition}
Given an open subset $A \subset \mathbb{R}^{d}$ and $\lambda > 0$, we define the Poisson Point Process of intensity $\lambda$ on $A$ as $\mathbf{\Pi}^{\lambda} \in \mathcal{P}({\rm Config}(A))$ characterized by the requirement that for any Borel set $B \subset A$,
\begin{equation}
\label{eq:Poisson}
    \mathbf{\Pi}^{\lambda} \left\{ |C|(B) = n \right\} = \frac{(\lambda|B|)^{n}}{n!} \exp \left( -\lambda |B| \right). 
\end{equation}
\end{definition}

\begin{definition}
Given a set $\Omega \subset \mathbb{R}^{d}$ and a measurable function $\lambda: \Omega \to \mathbb{R}^{+}$, we define the tagged Poisson Point Process $\overline{\mathbf{\Pi}}^{\lambda} \in \mathcal{P}(\Omega \times {\rm Config})$ as the unique tagged point process such that the disintegration measure satisfies that for each $x \in \Omega$,
\begin{equation}
  (\overline{\mathbf{\Pi}}^{\lambda})^{x} =   \mathbf{\Pi}^{\lambda(x)}. 
\end{equation}
\end{definition}

\begin{definition}
A point process ${\mathbf{P}} \in \mathcal{P}({\rm Config})$ is called stationary if for any set $A \subset {\rm Config}$ and any vector $\tau \in \mathbb{R}^{d}$,
\begin{equation}
    \mathbf{P}(A) = \mathbf{P}(\theta_{\tau} A).
\end{equation}
 The set of stationary point processes is denoted by $\mathcal{P}_{s}({\rm Config})$.

A tagged point process $ \overline{\mathbf{P}} \in \mathcal{P}(\Omega \times {\rm Config})$ is called stationary if $ \overline{\mathbf{P}}^{x}$ is stationary for all $x \in \Omega$. The set of stationary tagged point processes whose first marginal is the Lebuesgue measure on $\Omega$ is denoted by $ \overline{\mathbf{P}} \in \mathcal{P}_{s}(\Omega \times {\rm Config})$.
\end{definition}

\begin{definition}
\label{def:intensity}
 We define the intensity of a point process ${\mathbf{P}} \in \mathcal{P}({\rm Config})$ as 
 \begin{equation}
    {\rm int}[\mathbf{P}] := \mathbf{E}_{\mathbf{P}}[|C|\square_{1}].
 \end{equation}
 
 Note that, according to this definition, the ``Poisson Point Process of intensity $\lambda$" has intensity $\lambda$. Note also that if $\mathbf{P}$ is stationary, then the intensity is equal to 
 \begin{equation}
    \frac{1}{k^{d}} \mathbf{E}_{\mathbf{P}}[|C|\square_{k}]
 \end{equation}
for any $k > 0$.  
\end{definition}

\begin{definition}
We say that a tagged point process $ \overline{\mathbf{P}} \in \mathcal{P}(\Omega \times {\rm Config})$ has intensity $1$ if 
\begin{equation}
    \int_{\Omega} {\rm int} [ \overline{\mathbf{P}}^{x}] \, \mathrm  d x =1.
\end{equation}

The set of stationary tagged point processes of intensity $1$ is denoted by $ \mathcal{P}_{s,1}(\Omega \times {\rm Config})$.
\end{definition}
   
\begin{definition}
Given two point processes on a compact set $\Omega$, $\mathbf{P}_{1}, \mathbf{P}_{2} \in \mathcal{P}({\rm Config}(\Omega))$, we define the relative entropy of $\mathbf{P}_{1}$ with respect to $\mathbf{P}_{2}$ as 
\begin{equation}
   {\rm Ent}[\mathbf{P}_{1}|\mathbf{P}_{2}] = \begin{cases}
      \int \frac{ \mathrm d \mathbf{P}_{1}}{ \mathrm d \mathbf{P}_{2}} \log \left( \frac{ \mathrm d \mathbf{P}_{1}}{ \mathrm d \mathbf{P}_{2}} \right) \, \mathrm d \mathbf{P}_{2} \ {\rm if }\  \mathbf{P}_{1} \ll \mathbf{P}_{2}\\
      \infty \ {\rm otherwise, }
    \end{cases}
\end{equation}
where $\frac{d \mathbf{P}_{1}}{d \mathbf{P}_{2}}$ is the Radon–Nikodym derivative.

Given a stationary point process $\mathbf{P} \in \mathcal{P}_{s}({\rm Config})$, and $\lambda >0$ we define the specific relative entropy of $\mathbf{P}_{1}$ with respect to $\mathbf{\Pi}^{\lambda}$ as 
\begin{equation}
\label{def:relent}
     {\rm Ent}[\mathbf{P}|\mathbf{\Pi}^{\lambda}]=\lim_{R \to \infty} \frac{1}{R^{d}}  {\rm Ent}[\mathbf{P}|_{\square_{R}}|\mathbf{\Pi}^{\lambda}|_{\square_{R}}],
\end{equation}
where $\mathbf{\Pi}^{\lambda}|_{\square_{R}}$ and $\mathbf{P}|_{\square_{R}}$ denote the restrictions of $\mathbf{\Pi}^{\lambda}$ and $\mathbf{P}$ to the set $\square_{R}$, respectively \footnote{`restriction' is a bit sloppy, more precisely, we mean `projections on the set of intersections with'.}.

Given a tagged point processes $ \overline{\mathbf{P}} \in  \mathcal{P}_{s}(\Omega \times {\rm Config})$, and a measurable function $\lambda: \Omega \to \mathbb{R}^{+}$, we define the specific relative entropy of $ \overline{\mathbf{P}}$ with respect to $\overline{\mathbf{\Pi}}^{\lambda}$ as
\begin{equation}
    \overline{\rm Ent}[ \overline{\mathbf{P}} | \overline{\mathbf{\Pi}}^{\lambda}] = \int_{\Omega} {\rm Ent}[ \overline{\mathbf{P}}^{x} | \mathbf{\Pi}^{\lambda(x)}] \, \mathrm  d x.
\end{equation}
\end{definition}
Note that the specific relative entropy with respect to $\overline{\mathbf{\Pi}}^{\lambda}$ is not a function of the disintegration measure alone, since it depends on the configuration of points well as their density. 

The following lemma is classical (see, for example, \cite{leble2017large}) and establishes some basic properties about the entropy functional.  
\begin{lemma}
For any $\lambda>0$, there holds:
\begin{itemize}
    \item The limit in equation \eqref{def:relent} exists if $ {\mathbf{P}}$ is stationary.
    
    \item The map $\mathbf{P} \mapsto  {\rm Ent}[\mathbf{P}|\mathbf{\Pi}^{\lambda}]$ is affine and lower semi-continuous on $ \mathcal{P}_{s}({\rm Config})$.
    
    \item The sub-level sets of $ {\rm Ent}[\cdot|\mathbf{\Pi}^{\lambda}]$ are compact in $ \mathcal{P}_{s}({\rm Config})$ (it is a good rate function).
    
    \item We have $ {\rm Ent}[\mathbf{P}|\mathbf{\Pi}^{\lambda}] \geq 0$, and it vanishes only if $\mathbf{P} = \mathbf{\Pi}^{\lambda}$.
\end{itemize}
\end{lemma}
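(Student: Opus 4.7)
The plan is to dispatch the four bullet points in sequence, following the classical treatment of specific relative entropy for stationary point processes (as in Georgii's monograph, and adapted to Poisson references in \cite{leble2017large}). The single structural input that I will exploit repeatedly is the independence property of the Poisson process: for disjoint Borel sets $A,B \subset \mathbb{R}^{d}$, the restriction factorizes as $\mathbf{\Pi}^{\lambda}|_{A \sqcup B} = \mathbf{\Pi}^{\lambda}|_A \otimes \mathbf{\Pi}^{\lambda}|_B$.

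For the existence of the limit in \eqref{def:relent}, I would combine this factorization with the chain rule for relative entropy, which yields the superadditivity
\begin{equation*}
{\rm Ent}\bigl[\mathbf{P}|_{A \sqcup B}\bigm|\mathbf{\Pi}^{\lambda}|_{A \sqcup B}\bigr] \;\geq\; {\rm Ent}\bigl[\mathbf{P}|_{A}\bigm|\mathbf{\Pi}^{\lambda}|_{A}\bigr] + {\rm Ent}\bigl[\mathbf{P}|_{B}\bigm|\mathbf{\Pi}^{\lambda}|_{B}\bigr].
\end{equation*}
Tiling $\square_{kR}$ by $k^{d}$ translates of $\square_{R}$ and using stationarity of $\mathbf{P}$ to make all translates contribute equally gives $a_{kR} \geq k^{d}a_{R}$ for $a_{R} := {\rm Ent}[\mathbf{P}|_{\square_{R}}|\mathbf{\Pi}^{\lambda}|_{\square_{R}}]$; together with the a priori linear upper bound $a_{R} \leq CR^{d}$ coming from comparing intensities, a Fekete-type lemma yields the limit along dyadic sequences, and a sandwiching argument upgrades to general $R \to \infty$.

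For (2) and (3) I would invoke the Donsker--Varadhan variational formula
\begin{equation*}
{\rm Ent}\bigl[\mathbf{P}|_{\square_{R}}\bigm|\mathbf{\Pi}^{\lambda}|_{\square_{R}}\bigr] = \sup_{F \in C_{b}({\rm Config}(\square_{R}))} \left\{ \int F\, \mathrm d \mathbf{P}|_{\square_{R}} - \log \int e^{F}\, \mathrm d \mathbf{\Pi}^{\lambda}|_{\square_{R}} \right\},
\end{equation*}
which exhibits $a_{R}$ as a supremum of affine continuous functionals of $\mathbf{P}$; rescaling by $R^{-d}$ and passing to the limit preserves both convexity and lower semi-continuity on $\mathcal{P}_{s}({\rm Config})$. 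Affinity, which is strictly more than convexity, would then require the ergodic decomposition of stationary point processes: I would show that the specific entropy commutes with this decomposition, so that the functional is a genuine integral against an ergodic measure. For the goodness of the rate function in (3), I would upgrade lsc to compactness of sublevel sets by exponential tightness, via the standard entropy inequality applied to the events $\{|\mathcal{C}|(\square_{R}) \geq M\}$, whose probability under $\mathbf{\Pi}^{\lambda}$ decays super-exponentially in $M$ thanks to \eqref{eq:Poisson}.

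For (4), nonnegativity at each finite scale is immediate from Jensen's inequality applied to $x\log x$, and the nonnegativity survives the limit. To see that vanishing forces $\mathbf{P} = \mathbf{\Pi}^{\lambda}$, I would use the affinity from (2) together with the ergodic decomposition to reduce to ergodic $\mathbf{P}$, then apply a Pinsker-type estimate at each scale $R$ where the superadditivity saturates to conclude $\mathbf{P}|_{\square_{R}} = \mathbf{\Pi}^{\lambda}|_{\square_{R}}$ for a cofinal family of $R$, hence $\mathbf{P} = \mathbf{\Pi}^{\lambda}$. The main obstacle throughout is the affinity in (2): convexity is free from the variational formula, but the reverse inequality requires the ergodic machinery and is the technical heart of the classical proof; the other items reduce to routine manipulations once the superadditivity of Step~1 and the variational formula are in hand.
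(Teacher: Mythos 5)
The paper does not prove this lemma at all: it invokes it as classical and points to Lemma~2.1 (and the surrounding discussion) of \cite{leble2017large}, so there is no ``paper proof'' to compare against. Your sketch is a reconstruction of the standard argument from the literature on specific relative entropy of stationary point processes (Georgii, F\"{o}llmer, and the adaptation in \cite{leble2017large}), and in outline it is sound: the product structure of the Poisson reference measure gives, via the chain rule, superadditivity of the box relative entropies; the Donsker--Varadhan formula gives lower semi-continuity and convexity; ergodic decomposition (or an elementary two-sided estimate) upgrades convexity to affinity; exponential tightness gives goodness; and Jensen plus a Pinsker bound yields nonnegativity and the characterization of the zero set.

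Two remarks on accuracy. First, your claim of an a priori linear bound $a_R \leq C R^d$ ``by comparing intensities'' is not true in general: for a stationary process singular with respect to the Poisson law on boxes (e.g.\ a stationarized lattice), $a_R = \infty$ for all $R$ large, and no comparison of intensities produces a finite bound. Fortunately the superadditive Fekete lemma does not require an upper bound --- superadditivity alone gives $\lim_k a_{kR}/(kR)^d = \sup_k a_{kR}/(kR)^d \in (-\infty,\infty]$ --- so this assertion is unnecessary rather than fatal; it should just be dropped. Second, you present the ergodic decomposition as required for affinity. It is one valid route, but there is also an elementary argument: writing $f = tf_1 + (1-t)f_2$ for the finite-volume densities, one has $f \geq t f_1$, hence $\int f_1 \log f_1 \,\mathrm d\mathbf{\Pi} \leq \int f_1 \log f\,\mathrm d\mathbf{\Pi} - \log t$, which summed against $(t,1-t)$ gives $a_R(\mathbf{P}) \geq t\,a_R(\mathbf{P}_1) + (1-t)\,a_R(\mathbf{P}_2) - \log 2$; dividing by $R^d$ and letting $R\to\infty$ kills the bounded error and yields concavity, hence affinity. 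That shortcut sidesteps the ergodic machinery for item (2), though you would still want ergodic decomposition for the characterization of the zero set in item (4). Finally, note that the paper works on a fixed torus $\mathbb{T}^d$ while the $R\to\infty$ limit in \eqref{def:relent} and your tiling argument implicitly take place in $\mathbb{R}^d$; this is a notational looseness inherited from the paper, not a flaw introduced by you.
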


Lastly, we recall the definitions of rate functions and of Large Deviations Principle.

\begin{definition}[Rate function] Let $X$ be a metric space (or a topological space). A rate function is an lower semi-continuous  function $I:X \to [0,\infty]$, it is called a good rate function if its sublevel sets are compact.
\end{definition}

\begin{definition}[LDP] Let $P_{N}$ be a sequence of Borel probability measures on $X$ and $a_{N}$ a sequence of positive reals such that $a_{N} \to \infty.$ Let $I$ be a good rate function on $X.$ The sequence $P_{N}$ is said to satisfy a Large Deviations Principle (LDP) at speed $a_{N}$ with (good) rate function $I$ if for every Borel set $E \subset X$ the following inequalities hold:
\begin{equation}
    - \inf_{E^{\mathrm{o}}} I \leq \liminf_{N \to \infty} \frac{1}{a_{N}} \log \left( P_{N}(E) \right) \leq \limsup_{N \to \infty} \frac{1}{a_{N}} \log \left( P_{N}(E) \right) \leq  - \inf_{\overline{E}} I,
\end{equation}
where $E^{\mathrm{o}}$ and $\overline{E}$ denote respectively the interior and the closure of a set $E.$
Formally, this means that $P_{N}(E) \simeq \exp(-a_{N} \inf_{{E}} I).$
\end{definition}

We have now defined all objects needed for the statement of the main theorem, which we now introduce. 

\section{Main results}

The main result of this paper is the following theorem:

\begin{theorem}\label{LDPbeta1N}
Assume that $\beta = \frac{\theta}{N}$ for fixed $\theta>0$, and that $g: \mathbb{R}^{d} \to \mathbb{R}$ satisfies:
\begin{itemize}
    \item[1.] Symmetry.
    \begin{equation}
        g(x) = g(-x).
    \end{equation}
    
    \item[2.] Integrability.
    \begin{equation}
        g \in L^{1}_{\loc}(\mathbb{R}^{d}).
    \end{equation}
    
    \item[3.] Uniform continuity. For any $\epsilon >0,$ we have that $g$ is uniformly continuous on $\mathbb{R}^{d} \setminus B(0,\epsilon)$.
    
    \item[4.] Weak positive definiteness. If $\mu \in {BV}(\mathbb{R}^{d})$ is such that
    \begin{equation}
        \int_{\mathbb{R}^{d}} \, \mathrm d \mu =0,
    \end{equation}
    then
    \begin{equation}
        \mathcal{E}(\mu) \geq 0,
    \end{equation}
    where ${BV}(\mathbb{R}^{d})$ denotes the space of signed measures of bounded variation. 
    
    \item[5.]  For any $\mu \in \mathcal{P}(\mathbb{R}^{d})$,
    \begin{equation}
        \mathcal{E}(\mu) > - \infty.
    \end{equation}

    {\item[6.] There exists a function $D \in L^{1}_{\loc}(\mathbb{R}^{d})$ such that
    \begin{itemize}
        \item[6.1] $|g(x)| \leq D (x)$ for a.e. $x \in \mathbb{Q}^{d}$. 
        \item[6.2] $D$ is radially symmetric.
        \item[6.3] $D(r)$ is non-increasing.
    \end{itemize}} 
\end{itemize}

Assume that the confining potential $V$ satisfies:
\begin{itemize}
    \item[1.] $V$ is lower-semi-continuous.
    
    \item[2.] $V \in L^{1}(\mathbb{Q}^{d})$.

    \item[3.] $V$ is infinity outside of $\mathbb{Q}^{d}$.
\end{itemize}

Define $ \overline{\mathbf{P}}_{N}$ by Definition~\ref{def:tagempfiel} with $\Omega = \mathbb{Q}^{d}$ and $\mu_{\theta}$ by \eqref{def:theqmeas}. Then the push-forward of the Gibbs measure $\mathbf{P}_{N, \beta}$ (equation \eqref{Gibbs}) by $ \overline{\mathbf{P}}_{N}$ satisfies an LDP in $ \mathcal{P}(\mathbb{Q}^{d} \times {\rm Config})$ at speed $N$ with rate function 
\begin{equation}
    \mathcal{F}( \overline{\mathbf{P}}) =
    \begin{cases}
        \theta \mathcal{E}(\rho - \mu_{\theta}) + \overline{{\rm Ent}}[ \overline{\mathbf{P}} | \overline{\mathbf{\Pi}}^{\mu_{\theta}}] \ \ \ \textrm{if}\ \ \  \overline{\mathbf{P}} \in \mathcal{P}_{s,1}(\mathbb{Q}^{d} \times {\rm Config}),\\
        \infty \ \ \ \ \ \ \ \ \ \ \ \ \ \ \ \ \ \ \ \ \ \ \ \ \ \ \ \ \ \ \ \textrm{if}\ \ \  \overline{\mathbf{P}} \notin \mathcal{P}_{s,1}(\mathbb{Q}^{d} \times {\rm Config}),
    \end{cases}
\end{equation}
where $\rho$ is the probability measure with density ${\rm int}[ \overline{\mathbf{P}}^{x}]$. 
\end{theorem}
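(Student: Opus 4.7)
Following the two-step strategy outlined in the introduction, I would first establish an LDP for the tagged empirical field $\overline{\mathbf{P}}_N$ under the non-interacting product reference $\mu_\theta^{\otimes N}$, and then transfer it to the Gibbs measure $\mathbf{P}_{N,\beta}$ via a tilting identity and Varadhan's lemma. The target rate function for the non-interacting step is $\overline{{\rm Ent}}[\,\cdot\, | \,\overline{\mathbf{\Pi}}^{\mu_\theta}]$, finite on $\mathcal{P}_{s,1}(\mathbb{T}^d \times {\rm Config})$. Heuristically, $N$ i.i.d.\ $\mu_\theta$-distributed points, viewed at the microscopic scale $N^{1/d}$, locally resemble a Poisson process of intensity $\mu_\theta(x)$ near the tag $x$. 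This LDP can be extracted by Poissonizing the sample size (at cost $O(\log N) = o(N)$) and invoking a Sanov-type theorem for stationary point processes via subadditivity in the spirit of Georgii--Zessin; the tagged structure is recovered by disintegration and an integration over $x \in \mathbb{T}^d$.

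\textbf{Tilting identity.} Expanding the two-body interaction around $\mu_\theta$ via
\begin{equation*}
N\mathcal{E}({\rm emp}_N) \;=\; -N\mathcal{E}(\mu_\theta) + 2\sum_{i=1}^{N} g*\mu_\theta(x_i) + N\mathcal{E}({\rm emp}_N - \mu_\theta),
\end{equation*}
and applying the Euler--Lagrange equation $\theta(2 g*\mu_\theta + V) + \log\mu_\theta = c$ to rewrite $\theta \sum_i (2 g*\mu_\theta(x_i) + V(x_i))$ as $Nc - \sum_i \log\mu_\theta(x_i)$, a direct computation yields
\begin{equation*}
\mathrm d \mathbf{P}_{N,\beta}(X_N) \;=\; \tilde Z_{N,\beta}^{-1}\, \exp\bigl(-\theta N\, \mathcal{E}({\rm emp}_N - \mu_\theta)\bigr)\, \mathrm d \mu_\theta^{\otimes N}(X_N),
\end{equation*}
where $\tilde Z_{N,\beta}$ absorbs all deterministic constants (including the diagonal contribution $-\theta g(0)$). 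This exhibits the interacting Gibbs measure as a continuous tilt of the product reference.

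\textbf{Varadhan transfer.} The functional $\overline{\mathbf{P}} \mapsto \mathcal{E}(\rho - \mu_\theta)$, where $\rho(x) = {\rm int}[\overline{\mathbf{P}}^x]$, is continuous on $\mathcal{P}_{s,1}(\mathbb{T}^d \times {\rm Config})$ under hypotheses (1)--(3) on $g$ (using $g \in L^1$ and uniform continuity away from $0$) and bounded below by the weak positive definiteness hypothesis (4). Applying Varadhan's lemma to the continuous tilt $-\theta \mathcal{E}({\rm emp}_N - \mu_\theta)$, combined with the non-interacting LDP, yields an LDP for $\overline{\mathbf{P}}_N$ under $\mathbf{P}_{N,\beta}$ with rate function $\theta \mathcal{E}(\rho - \mu_\theta) + \overline{{\rm Ent}}[\overline{\mathbf{P}} | \overline{\mathbf{\Pi}}^{\mu_\theta}]$, up to a normalization. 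Evaluating at $\overline{\mathbf{P}} = \overline{\mathbf{\Pi}}^{\mu_\theta}$, where both summands vanish, shows that the infimum is $0$, so no subtraction is required.

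The principal difficulty should lie in the non-interacting LDP: producing a joint LDP that simultaneously controls the macroscopic empirical measure ${\rm emp}_N$ (which determines $\rho$) and the microscopic blowups (which feed the specific entropy $\overline{{\rm Ent}}$) requires tracking two distinct scales and carefully identifying the reference as $\overline{\mathbf{\Pi}}^{\mu_\theta}$ rather than the unit-intensity $\overline{\mathbf{\Pi}}^{1}$. A secondary difficulty is verifying continuity of $\overline{\mathbf{P}} \mapsto \rho$ in the weak topology on tagged point processes, which likely requires a mesoscopic regularization together with the integrability of $g$.
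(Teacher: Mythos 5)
Your strategy mirrors the paper at a structural level (splitting around $\mu_\theta$, first proving the non-interacting LDP, then transferring), but the transfer step has a genuine gap. Varadhan's lemma requires the tilt functional to be (at least upper semi-) continuous, and you assert continuity of $\overline{\mathbf{P}} \mapsto \mathcal{E}(\rho - \mu_\theta)$ where $\rho(x) = {\rm int}[\overline{\mathbf{P}}^x]$. This fails in general: the intensity map $\overline{\mathbf{P}} \mapsto \rho$ is only lower semi-continuous in the weak topology (since $C \mapsto |C|(\square_1)$ is unbounded on ${\rm Config}$), and $\mathcal{E}$ itself is only l.s.c.\ because $g$ is permitted to be singular at the origin (hypothesis 2 only gives $g \in L^1$). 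The paper acknowledges this by splitting the argument: the upper bound relies on the l.s.c.\ statement (Lemma~\ref{lem:l.s.c.}) plus the compactness Lemma~\ref{lem:convden}, while the lower bound requires the explicit construction of Proposition~\ref{Prop:fund}, which produces regularized microstates and proves, via a lengthy estimate (Substeps 3.1--3.3), that along this specific family ${\rm F}_N({\rm emp}_N, \mu_\theta) \to \mathcal{E}(\rho - \mu_\theta)$. This is precisely the work that cannot be outsourced to a continuity-plus-Varadhan black box.

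A second issue: you absorb ``the diagonal contribution $-\theta g(0)$'' into a constant, but $g(0)$ is allowed to be $+\infty$ (e.g., any singular $L^1$ kernel). The paper's tilt is phrased in terms of ${\rm F}_N(X_N, \mu_\theta) = \frac{1}{N^2}\sum_{i\neq j} g(x_i-x_j) + \mathcal{E}(\mu_\theta) - 2\mathcal{G}({\rm emp}_N, \mu_\theta)$, which deliberately \emph{excludes} the diagonal, and the quantitative comparison between the off-diagonal quantity ${\rm F}_N$ and the smoothed $\mathcal{E}({\rm emp}_N^\epsilon - \mu_\theta)$ is one of the main technical points of the lower-bound construction (Substep 3.1), exploiting the regularization that keeps points $\gtrsim \tau N^{-1/d}$ apart. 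Identifying $N\mathcal{E}({\rm emp}_N)$ with the off-diagonal Hamiltonian plus a finite constant is not available here. In sum, the non-interacting LDP and the splitting identity are in the right spirit, but the Varadhan transfer must be replaced by separate upper/lower bound arguments, the lower one necessitating a constructive estimate of the mean-field energy along admissible configurations.
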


\begin{remark}
    
As mentioned in the introduction, the hypothesis that $V$ is infinity outside of $\mathbb{Q}^{d}$ is added so that we can effectively work on the hypercube $\mathbb{Q}^{d}$. Note that The hypothesis that $V \in L^{1}(\mathbb{Q}^{d})$ implies that $V$ is finite a.e. in $\mathbb{Q}^{d}$. It is straightforward to generalize our results to potentials $V$ that take the value $+\infty$ in $\mathbb{Q}^{d}$. 
\end{remark}

\begin{remark}
    An equivalent formulation of condition $6$ is the following: 
    \begin{itemize}
        \item[6.] The function $D: \mathbb{R}^{d} \to \mathbb{R}$, defined as 
        \begin{equation}
            D(x):=
            \begin{cases}
                \max_{ \left\{ s : |x| \leq |s| \leq \sqrt{d} T \right\}} |f(s)| &\textbf{ if }  |x| \leq \sqrt{d} T\\
                 \max_{\left\{ s :  |s| = \sqrt{d} T \right\}} |f(s)|&\textbf{ if } |x| \geq \sqrt{d} T,
            \end{cases}
        \end{equation}
        satisfies that $D \in L^{1}_{\loc}(\mathbb{R}^{d})$ \footnote{The $\sqrt{d} T$ factor comes from the diagonal of the hypercube in $d$ dimenions}. 
    \end{itemize}
\end{remark}

The way to prove Theorem \ref{LDPbeta1N} will be to first consider the case of non-interacting particles, and then generalize the statement to interacting particles. The statement for non-interacting particles is the following proposition, which is a new technical ingredient in this paper.

\begin{proposition}[LDP for non-interacting particles at high temperature]
\label{LDPnoninteracting}
Assume that $g=0$, $\beta = \frac{\theta}{N}$ for fixed $\theta>0$, and $V$ satisfies items $1-3$ of Theorem \ref{LDPbeta1N}. Define $\mu_{\theta}$ by \eqref{def:theqmeas} and define $ \overline{\mathbf{P}}_{N}$ by Definition~\ref{def:tagempfiel} with $\Omega = \mathbb{Q}^{d}$. Then the push-forward of $\mathbf{P}_{N, \beta}$ (equation \eqref{Gibbs}) by $ \overline{\mathbf{P}}_{N}$ satisfies an $LDP$ in $ \mathcal{P}(\mathbb{Q}^{d} \times {\rm Config})$ at speed $N$ and rate function
\begin{equation}
  \mathcal{F}( \overline{\mathbf{P}}) =\begin{cases}
      \overline{{\rm Ent}}[ \overline{\mathbf{P}} | \overline{\mathbf{\Pi}}^{\mu_{\theta}}] \ \ \ \ \ \ \ \ \ \ \ \ \ \ \ \ \ \ \, \ \textrm{if}\ \ \  \overline{\mathbf{P}} \in \mathcal{P}_{s,1}(\mathbb{Q}^{d} \times {\rm Config}),\\
        \infty \ \ \ \ \ \ \ \ \ \ \ \ \ \ \ \ \ \ \ \ \ \ \ \ \ \ \ \ \ \ \ \textrm{if}\ \ \  \overline{\mathbf{P}} \notin \mathcal{P}_{s,1}(\mathbb{Q}^{d} \times {\rm Config}),
  \end{cases} 
\end{equation}
\end{proposition}

\begin{remark}
It is easy to see that in the setting of Proposition \ref{LDPnoninteracting},
\begin{equation}
    \mu_{\theta} (x) = \frac{1}{z} \exp\left(-\theta{V(x)}\right),
\end{equation}
where
\begin{equation}
    z = \int_{\mathbb{Q}^{d}} \exp\left(-\theta{V(x)}\right) \, \mathrm  d x.
\end{equation}
We also have that 
\begin{equation}
    \mathrm d {\mathbf{P}}_{N,\beta} = \mu_{\theta}^{\otimes N} \mathrm d X_{N}
\end{equation}
and
\begin{equation}
    {Z}_{N, \beta} = z^{N}.
\end{equation}
\end{remark}

The strategy to prove Theorem \ref{LDPbeta1N} consists of 3 parts:
\begin{itemize}
    \item[1.] A splitting formula, that allows us to factorize the energy around the thermal equilibrium measure (limiting macroscopic density). 
    \item[2.] An LDP for a system of non-interacting particles (Proposition \ref{LDPnoninteracting}).  
    \item[3.] A study of the mean-field energy functional, that
allows us to derive the full LDP from the non-interacting case.
\end{itemize}

\cite{garcia2019large} uses a remarkably natural and general strategy to prove an LDP for the empirical measure in general interactions. However, when dealing with the tagged empirical field, a different approach is needed. The remarkable paper \cite{leble2017large} introduced several new technical ingredients, and this proof relies on them. However, it is still necessary to introduce new ingredients when dealing with a larger temperature regime and general interactions.  

\section{Literature review}

Rare events at the macroscopic scale were treated in \cite{garcia2019large} and \cite{chafai2014first} in the context of general interactions. The observable that allows us to analyze the macroscopic scale is the empirical measure (equation \eqref{empm}). In our setting, their main results are that the push-forward of the Gibbs measure (equation \eqref{Gibbs}) by the empirical measure (equation \eqref{empm}) satisfies an LDP in $\mathcal{P}(\mathbb{R}^{d})$ at speed $N^{2} \beta$ with a rate function given by
\begin{equation}
    \mathcal{E}_{V}(\cdot) - \min_{\mu } \mathcal{E}_{V}(\mu)
\end{equation}
if $\frac{1}{N} \ll \beta$, and 
\begin{equation}
    \mathcal{E}_{V}^{\theta}(\cdot) - \min_{\mu } \mathcal{E}_{V}^{\theta}(\mu)
\end{equation}
if $\beta = \frac{\theta}{N}$. The reference \cite{garcia2019large} is even more general since it treats interacting particle systems in general compact manifolds, and with an interaction that is given by a many body formula. This paper continues the investigation of \cite{garcia2019large} by analyzing rare events in the high-temperature regime for general interactions at the microscopic scale. 

The microscopic behavior of a general {interacting gas} in the high-temperature regime is also the subject of \cite{lambert2021poisson}. In this case, the author deals with the local point process
\begin{equation}
    \sum_{i=1}^{N} \delta_{N^{\frac{1}{d}} (x_{i} -x_{0} )},
\end{equation}
and shows that it converges to a Poisson Point Process, with density given by the thermal equilibrium measure at the point $x_{0}$. Even though the subject of this paper is also the microscopic behavior of a general {interacting gas} in the high-temperature regime, our results are to a large extent independent. Neither result implies the other, and the techniques used are quite different. Indeed, even though the LDP proved in Theorem \ref{LDPbeta1N} implies that the tagged empirical field converges to a tagged Poisson Point Process, it does not imply that $ \sum_{i=1}^{N} \delta_{N^{\frac{1}{d}} (x_{i} -x_{0} )}$ converges to a Poisson Point Process. Conversely, one cannot derive an LDP from the convergence result proved in \cite{lambert2021poisson}.     

As mentioned in the introduction, in \cite{leble2017large} the authors treat the Riesz and Coulomb cases at {low temperature} ($\beta = \theta N^{\frac{2s - d}{d}}$). The main result of \cite{leble2017large} was later extended to hyper-singular Riesz gases \cite{hardin2018large}, two-component plasmas \cite{leble2017largetwo}, and the local tagged empirical field of a one-component plasma \cite{leble2017local, armstrong2021local}. The main result in \cite{leble2017large} has a similar flavor to ours because the rate function involves the competition of two terms: one derived from the energy and one derived from the entropy. In contrast, in the case of a Riesz gas at an intermediate temperature regime, there is no competition between the terms: the energy imposes a constraint at the leading order, and the entropy appears at the next order (see Appendix). Unlike \cite{leble2017large}, the energy-derived term that appears in the rate function of Theorem \ref{LDPbeta1N} is not the renormalized energy, but rather a mean-field jellium-type energy. Indeed, it is not even clear what ``renormalized energy" means in the context of general  interactions. {The object of our LDP is basically the tagged empirical field as defined in \cite{leble2017large}. The main difference (in the definition of the observable) is that in our case, the domain of averaging is the (effective) entire space and not the support of the equilibrium measure. This is due to the fact that, unlike the equilibrium measure, the \emph{thermal} equilibrium measure does not have compact support and is everywhere positive (even in Euclidean space) if the potential $V$ is finite a.e. The definition of the tagged empirical field would be trivial if the domain of averaging were the whole Euclidean space. In the setting of the hypercube instead of the entire space, this obstacle is naturally eliminated since the domain of integration is always compact. Extending quantities to infinite space is a classical problem in statistical mechanics (see, for example, chapter 6 of \cite{friedli2017statistical} for a discussion of the ``energy density"), but the intrinsically long-range range nature of the interaction makes it non-trivial to adapt this general setting to our problem. A feature that our LDP has in common with \cite{leble2017large} is the presence of the tagged specific relative entropy in the rate function. Unlike \cite{leble2017large}, however, in our case, the entropy is taken with respect to an in-homogeneous Poisson Point Process. }

Apart from the microscopic and macroscopic scales, it is also possible to analyze rare events at a mesoscopic scale. In the Coulomb setting, this is the subject of \cite{padilla2024large}. In this case, the observable to analyze is the local empirical measure, defined as 
\begin{equation}
    \frac{1}{N^{1-\lambda d}} \sum_{i=1}^{N} \delta_{N^{\lambda}x_{i}}|_{\square_{R}},
\end{equation}
for $\lambda \in \left( 0, \frac{1}{d} \right)$. In this case, the typical event is that the local empirical measure approximates a uniform measure of density $\mu_{V}(0)$. The rare events are governed by an LDP in which the rate function contains either an entropy-derived term or an energy-derived term, depending on the magnitude of the temperature. 

\section{Preliminaries}
\label{sect:prelims}

Before starting the proof of Proposition \ref{LDPnoninteracting} and Theorem \ref{LDPbeta1N}, we state some general preliminary results, and introduce additional notation and definitions. 

\subsection{Additional notation and definitions}

We start by giving a few additional definitions and introducing additional notation. 

\begin{definition}

We introduce the notation 
\begin{equation}
    \begin{split}
        \mathcal{G}(\mu, \nu) &= \int_{\mathbb{R}^{d} \times \mathbb{R}^{d}} g(x-y)\, \mathrm d \mu \otimes \nu (x,y)\\
        \mathcal{G}^{\neq}(\mu, \nu) &= \int_{\mathbb{R}^{d} \times \mathbb{R}^{d} \setminus \Delta} g(x-y) \, \mathrm d \mu \otimes \nu (x,y),
    \end{split}
\end{equation}
where $\Delta = \{x,y \in \mathbb{R}^{d} \times \mathbb{R}^{d} : x=y \}$.

Given a measure $\mu$ on $\mathbb{R}^{d}$, we define
\begin{equation}
    h^{\mu} = g \ast \mu. 
\end{equation}

Given a measure $\mu$ on $\mathbb{R}^{d}$, and $X_{N} \in \mathbb{R}^{d \times N}$, we define
\begin{equation}
    {\rm F}_{N}(X_{N}, \mu) = \frac{1}{N^{2}} \sum_{i \neq j} g(x_{i} - x_{j}) + \mathcal{E}(\mu) - 2   \mathcal{G}({\rm emp}_{N}, \mu).
\end{equation}

\end{definition}

\begin{definition}
Given an open subset $A \subset \mathbb{R}^{d}$, and a positive measurable function $\mu: A \to \mathbb{R}^{+}$, we define the in-homogeneous Poisson Point Process of intensity $\mu$ on $A$ as $\mathbf{\Pi}^{\mu} \in \mathcal{P}({\rm Config}(A))$ characterized by the requirement that for any Borel set $B \subset A$,
\begin{equation}
    {\mathbf{\Pi}^{\mu} \left\{ |C|(B) = n \right\} = \frac{( \lambda m(B))^{n}}{n!} \exp \left( - \lambda m(B) \right),}
\end{equation}
where
\begin{equation}
    m(B) = \int_{B} \mu(x) \, \mathrm  d x.
\end{equation}
\end{definition}

\begin{definition}
Given two probability measures $\mu, \nu \in \mathcal{P}(\mathbb{R}^{d})$, we define the relative entropy of $\mu$ with respect to $\nu$ as 
\begin{equation}
    \begin{cases}
     {\rm ent}[\mu|\nu] = \int \frac{ \mathrm d \mu}{ \mathrm d \nu} \log \left( \frac{ \mathrm d \mu}{ \mathrm d \nu} \right) \, \mathrm d \nu \ {\rm if }\  \mu \ll \nu\\
     {\rm ent}[\mu|\nu] = \infty \ {\rm if }\ {\rm not,}
    \end{cases}
\end{equation}
where $\frac{ \mathrm d \mu}{ \mathrm d \nu}$ is the Radon–Nikodym derivative.
\end{definition}

\subsection{The thermal equilibrium measure}
\label{thermeqmeas}

In this section, we prove the existence, uniqueness, and some other basic properties of the thermal equilibrium measure. We also use the thermal equilibrium measure to derive a splitting formula for the energy. 

\begin{proposition}[Existence, uniqueness, characterization]

{Assume that $g$ satisfies items $2,4,5$ of the hypotheses of Theorem \ref{LDPbeta1N} and that $V$ satisfies items $1,2,3$ of the hypotheses of Theorem \ref{LDPbeta1N}.} 

Then the functional \eqref{thermallimit} has a unique minimizer in the set $\mathcal{P}(\mathbb{Q}^{d})$, which we denote $\mu_{\theta}.$ Additionally, $\mu_{\theta}$ satisfies that $\mu_{\theta} > 0$ a.e. in $\mathbb{Q}^{d}$ and the Euler-Lagrange equation

\begin{equation}
\label{eq:ELeq}
    2 h^{\mu_{\theta}} + V + \frac{1}{\theta} \log \mu_{\theta} = c,
\end{equation}
for some $c \in \mathbb{R}.$

\end{proposition}

\begin{proof}

\textbf{Step 1}[Existence and uniqueness]

Existence is a simple consequence of the direct method: Let $\mu_{n}$ be a sequence such that 
\begin{equation}
    \lim_{n \to \infty} \mathcal{E}^{\theta}_{V}(\mu_{n}) = \inf_{\mu \in \mathcal{P}(\mathbb{Q}^{d})} \mathcal{E}^{\theta}_{V}(\mu). 
\end{equation}
Then, modulo a subsequence, $\mu_{n}$ converges weakly to a measure $\mu$. By properties $1$ and $2$ of $V$, and $1$ and $2$ of $g$, we have that $\mathcal{E}_{V}$ is lower semi-continuous with respect to weak convergence and therefore
\begin{equation}
        \mathcal{E}_{V}(\mu) \leq \liminf_{n \to \infty} \mathcal{E}_{V}(\mu_{n}).
\end{equation}
It is well-known that the entropy functional is lower semi-continuous, and hence
\begin{equation}
    {\rm ent}[\mu] \leq \liminf_{n \to \infty} {\rm ent}[\mu_{n}].
\end{equation}
This implies that $\mu$ is a minimizer of $\mathcal{E}^{\theta}_{V}$ in $\mathcal{P}(\mathbb{Q}^{d})$. 

Uniqueness is a consequence of property $4$ of $g$: Proceed by contradiction and assume that two distinct minimizers $\mu_{1}, \mu_{2}$ exist. Then, since $\mathcal{E}$ is quadratic and by property $4$ of $g$,
\begin{equation}
    \begin{split}
         \mathcal{E}\left(\frac{\mu_{1}+\mu_{2}}{2}\right) &= \frac{1}{2}\left(  \mathcal{E}\left(\mu_{1}\right) +  \mathcal{E}\left(\mu_{2}\right) -2  \mathcal{E}\left( \frac{\mu_{1}-\mu_{2}}{2} \right) \right)\\
         &\leq\frac{1}{2}\left(  \mathcal{E}\left(\mu_{1}\right) +  \mathcal{E}\left(\mu_{2}\right) \right).
    \end{split}         
\end{equation}

 If $\mu_{1} \neq \mu_{2}$, by strict convexity of the function 
 \begin{equation}
     \mu \to {\rm ent}[\mu],
 \end{equation}
 there holds
 \begin{equation}
 \begin{split}
     \mathcal{E}^{\theta}_{V} \left( \frac{\mu_{1}+\mu_{2}}{2} \right) &< \frac{1}{2} \left( \mathcal{E}^{\theta}_{V}(\mu_{1}) + \mathcal{E}^{\theta}_{V}(\mu_{2}) \right)\\
     &= \inf_{\mu \in \mathcal{P}(\mathbb{Q}^{d})} \mathcal{E}^{\theta}_{V}(\mu).
\end{split}
\end{equation}
This is a contradiction and therefore the minimizer is unique. 

\textbf{Step 2}[Positivity]

The proof is standard, see for example \cite{armstrong2022thermal, neri2004statistical, rougerie2014quantum}.

We proceed by contrapositive, and assume that there is a bounded set $X \subset \mathbb{Q}^{d}$ such that $\mu_{\theta}(x)=0$ for all $x \in X$. Now consider
\begin{equation}
    {\mu}_{\theta}^{\epsilon} = \frac{\mu_{\theta} + \epsilon \mathbf{1}_{X}}{ 1 + \epsilon|X|}.
\end{equation}
Doing a Taylor expansion of $\mathcal{E}_{\theta}({\mu}_{\theta}^{\epsilon})$, we get that
\begin{equation}
    \mathcal{E}_{V}^{\theta}({\mu}_{\theta}^{\epsilon}) = \mathcal{E}_{V}^{\theta}(\mu_{\theta}) - \epsilon |X| \left( \mathcal{E}_{V}^{\theta}(\mu_{\theta}) \right)  + \epsilon \int_{X} h^{\mu_{\theta}} (x) + V(x) \, \mathrm d x +  \frac{1}{\theta}|X| \epsilon \log \epsilon + O(\epsilon^{2}). 
\end{equation}

Note that 
\begin{equation}
    \int_{X} h^{\mu_{\theta}} (x) + V(x) \, \mathrm d x < \infty,
\end{equation}
since $X$ is bounded, $g$ satisfies item $1$, $V$ satisfies item $2$, and $\int_{\mathbb{Q}^{d}} \mu_{\theta}\, \mathrm d x = 1$. We then get that 
\begin{equation}
     \mathcal{E}_{V}^{\theta}({\mu}_{\theta}^{\epsilon}) = \mathcal{E}_{V}^{\theta}(\mu_{\theta}) + \epsilon C +  \frac{1}{\theta}|X| \epsilon \log \epsilon + O(\epsilon^{2}),
\end{equation}
where $C$ depends on $X$.

If $|X| \neq 0$, this would imply that 
\begin{equation}
     \mathcal{E}_{V}^{\theta}({\mu}_{\theta}^{\epsilon}) < \mathcal{E}_{V}^{\theta}(\mu_{\theta}) 
\end{equation}
for $\epsilon >0$ small enough. 

This would be a contradiction and therefore $\mu_{\theta}$ is positive a.e. in $\mathbb{Q}^{d}$. 

\textbf{Step 3}[Euler-Lagrange equation]

Let $f$ be a smooth, compactly supported function such that $\int_{\mathbb{Q}^{d}} f \mu_{\theta} (x) \, \mathrm d x =0$. Note that $(1 + tf) \mu_{\theta}$ is a probability measure for small enough $|t|$. Since $\mu_{\theta}$ is a minimizer, we obtain that 
\begin{equation}
    \mathcal{E}_{V}^{\theta}(\mu_{\theta}) \leq  \mathcal{E}_{V}^{\theta}((1 + tf)\mu_{\theta}), 
\end{equation}
which implies, taking the derivative at $t=0$, that 
\begin{equation}
    \int_{\mathbb{Q}^{d}} (2h^{\mu_{\theta}} + V + \frac{1}{\theta} \log \mu_{\theta} ) f  \mu_{\theta} \, \mathrm d x =0.
\end{equation}

Since $\mu_{\theta} \neq 0$ a.e. we infer that 
\begin{equation}
    \int_{\mathbb{Q}^{d}} (2h^{\mu_{\theta}} + V + \frac{1}{\theta} \log \mu_{\theta} ) g \, \mathrm d x =0.
\end{equation}
for all $g$ such that $\int_{\mathbb{Q}^{d}} g(x) \, \mathrm d x =0$, which implies 
\begin{equation}
    2h^{\mu_{\theta}} + V + \frac{1}{\theta} \log \mu_{\theta} = c
\end{equation}
for some $c$.

\end{proof}

\begin{remark}
\label{rem:thembounded}
    Note that equation \eqref{eq:ELeq} may be rewritten as 
    \begin{equation}
    \label{eq:ELeq2}
        \mu_{\theta} = \exp \left( -\theta \left( V + h^{\mu_{\theta}} - c\right) \right). 
    \end{equation}
    Since $V$ is bounded below, and $h^{\mu} $ is bounded below uniformly for any $\mu \in \mathcal{P}(\mathbb{Q}^{d})$, we may infer that $\mu_{\theta} $ is bounded above, by a bound that depends on $\theta$. 

    Since $\mu_{\theta} $ is bounded, we may proceed as in the proof of Lemma \ref{lem:l.s.c.} and show that $ h^{\mu_{\theta}}$ is continuous. This, in turn, implies that $\mu_{\theta} $ is continuous by equation \eqref{eq:ELeq2}. Hence, $\mu_{\theta} $ is uniformly continuous $\mathbb{Q}^{d}$. 
\end{remark}

Now that we have proved the existence, uniqueness, and some basic properties of the thermal equilibrium measure; we will use it to derive a splitting formula for the Hamiltonian. This formula appeared in the Coulomb case in \cite{armstrong2021local}. 

\begin{proposition}[Thermal splitting formula]
Let $\theta >0$. We introduce the notation
\begin{equation}
    \zeta_{\theta} = - \frac{1}{\theta} \log(\mu_{\theta}).
\end{equation}

Then for any point configuration $X_{N}$ the Hamiltonian $\mathcal{H}_{N}$ can be rewritten (split) as  
\begin{equation}\label{eq:thermspltfrm}
    \mathcal{H}_{N} (X_{N}) = N^{2} \left(  \mathcal{E}_{V}^{\theta} (\mu_{\theta}) +  {\rm F}_{N}(X_{N}, \mu_{\theta})+ \int_{\mathbb{R}^{d}} \zeta_{\theta}\, \mathrm d {\rm emp}_{N} \right).
\end{equation}
\end{proposition}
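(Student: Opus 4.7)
The plan is a direct algebraic verification, using the Euler--Lagrange equation for $\mu_{\theta}$ proved in the preceding proposition as the only real input. Expanding the definition of $\mathrm{F}_{N}$ gives
\begin{equation*}
N^{2}\mathrm{F}_{N}(X_{N},\mu_{\theta}) = \sum_{i\neq j} g(x_{i}-x_{j}) + N^{2}\mathcal{E}(\mu_{\theta}) - 2N\sum_{i=1}^{N} h^{\mu_{\theta}}(x_{i}),
\end{equation*}
since $\mathcal{G}(\mathrm{emp}_{N},\mu_{\theta}) = \tfrac{1}{N}\sum_{i} h^{\mu_{\theta}}(x_{i})$. The pair-interaction term already matches the first piece of $\mathcal{H}_{N}$, so it remains to show that the remaining terms of the right-hand side of \eqref{eq:thermspltfrm} reproduce the confinement term $N\sum_{i} V(x_{i})$.

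The Euler--Lagrange equation $2h^{\mu_{\theta}} + V + \tfrac{1}{\theta}\log\mu_{\theta} = c$ rewrites as $\zeta_{\theta} = 2h^{\mu_{\theta}} + V - c$, so that
\begin{equation*}
N^{2}\int_{\mathbb{T}^{d}} \zeta_{\theta}\, \mathrm d\,\mathrm{emp}_{N} = 2N\sum_{i} h^{\mu_{\theta}}(x_{i}) + N\sum_{i} V(x_{i}) - N^{2} c.
\end{equation*}
The $h^{\mu_{\theta}}$-terms cancel against the ones coming from $N^{2}\mathrm{F}_{N}$, leaving the desired $N\sum_{i} V(x_{i})$ plus a constant $-N^{2}c + N^{2}\mathcal{E}(\mu_{\theta})$. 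To absorb this constant I would integrate the Euler--Lagrange equation against $\mu_{\theta}$, which yields $2\mathcal{E}(\mu_{\theta}) + \int V\, \mathrm d\mu_{\theta} + \tfrac{1}{\theta}\mathrm{ent}[\mu_{\theta}] = c$, i.e.\ $\mathcal{E}_{V}^{\theta}(\mu_{\theta}) + \mathcal{E}(\mu_{\theta}) = c$, which is exactly the identity needed to cancel the remaining $N^{2}$-constants.

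There is no serious obstacle here; the only care point is to verify that the Euler--Lagrange equation can be integrated against $\mu_{\theta}$, i.e.\ that each of $h^{\mu_{\theta}}$, $V$, and $\log\mu_{\theta}$ is in $L^{1}(\mu_{\theta})$. This follows from $g,V \in L^{1}(\mathbb{T}^{d})$ together with $\mu_{\theta}$ being a probability density, for the first two terms, and from the finiteness of $\mathrm{ent}[\mu_{\theta}]$ at the minimizer for the last. Once these integrabilities are in hand, the splitting identity is an arrangement of the terms above.
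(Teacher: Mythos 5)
Your proof is correct and follows essentially the same route as the paper: expand the definition of $\mathrm{F}_{N}$, substitute the Euler--Lagrange equation for $\mu_{\theta}$ to convert the $\zeta_{\theta}$-integral into the confinement term, and identify the constant $c$ by pairing the Euler--Lagrange equation with $\mu_{\theta}$, which gives $c = \mathcal{E}_{V}^{\theta}(\mu_{\theta}) + \mathcal{E}(\mu_{\theta})$. The paper's proof is more terse (it decomposes the pair-interaction term around $\mu_{\theta}$ and then says "use the Euler--Lagrange equation"), but the underlying algebra is the same, and your added remark on the integrability of $h^{\mu_{\theta}}$, $V$, and $\log\mu_{\theta}$ against $\mu_{\theta}$ is a valid and useful point of care that the paper leaves implicit.
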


\begin{proof}
It suffices to write 
\begin{equation}
    \begin{split}
        \mathcal{H}_{N}(X_{N}) &= N^{2} \left( \frac{1}{N^{2}} \sum_{i \neq j} g(x_{i} - x_{j}) + \int_{\mathbb{R}^{d}} V \, \mathrm d {\rm emp}_{N} \right) \\
        &= N^{2} \left(\mathcal{E} (\mu_{\theta}) + 2 \mathcal{G} (\mu_{\theta}, {\rm emp}_{N} - \mu_{\theta}) + {\rm F}_{N} ({\rm emp}_{N}, \mu_{\theta}) + \int_{\mathbb{R}^{d}} V \, \mathrm d {\rm emp}_{N} \right)
    \end{split}
\end{equation}
and then use the Euler-Langrange equation for $\mu_{\theta}$.
\end{proof}

\subsection{Next order partition function}

In analogy with previous work in the field \cite{armstrong2021local, leble2017large}, we define a next-order partition function; which will in practice be a negligible error term in the rest of the paper. 

\begin{definition}
We define the next order partition function $K_{N, \beta}$ as 
\begin{equation}
\label{eq:defnext}
    K_{N, \beta} = \frac{Z_{N, \beta}}{ \exp \left( -{ N \theta}  \mathcal{E}_{V}^{\theta}(\mu_{\theta}) \right)},
\end{equation}
with $\theta = N \beta$.
\end{definition}

\begin{proposition}
\label{limitofpartitionfunction}
Assume that $\beta = \frac{\theta}{N}$ for a fixed $\theta \in \mathbb{R}^{+}$. Then 
\begin{equation}
    \lim_{N \to \infty} \frac{\log \left( K_{N, \beta} \right)}{N}  = 0.
\end{equation}
\end{proposition}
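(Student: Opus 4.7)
The plan is to reduce the claim, via the thermal splitting formula~\eqref{eq:thermspltfrm}, to the asymptotics of an exponential integral against the product measure $\mu_\theta^{\otimes N}$, and then bound this integral both above and below. Multiplying~\eqref{eq:thermspltfrm} by $-\beta = -\theta/N$ and using $-\theta \zeta_\theta = \log \mu_\theta$ decomposes the Gibbs density as
\begin{equation}
e^{-\beta \mathcal{H}_N(X_N)} = e^{-N \theta \mathcal{E}_V^\theta(\mu_\theta)} \, e^{-N \theta {\rm F}_N(X_N, \mu_\theta)} \, \mu_\theta^{\otimes N}(X_N),
\end{equation}
so $K_{N,\beta} = \mathbb{E}_{\mu_\theta^{\otimes N}}[\exp(-N\theta {\rm F}_N(\cdot, \mu_\theta))]$, and the claim reduces to showing that $\frac{1}{N}\log \mathbb{E}_{\mu_\theta^{\otimes N}}[\exp(-N\theta {\rm F}_N)] \to 0$.

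For the lower bound, Jensen's inequality applied to the convex function $\exp$ yields $\log K_{N,\beta} \geq -N\theta\,\mathbb{E}_{\mu_\theta^{\otimes N}}[{\rm F}_N(\cdot, \mu_\theta)]$. A direct computation using independence and symmetry gives $\mathbb{E}_{\mu_\theta^{\otimes N}}[g(x_i - x_j)] = \mathcal{E}(\mu_\theta)$ for $i \neq j$ and $\mathbb{E}_{\mu_\theta^{\otimes N}}[\mathcal{G}({\rm emp}_N, \mu_\theta)] = \mathcal{E}(\mu_\theta)$, so $\mathbb{E}_{\mu_\theta^{\otimes N}}[{\rm F}_N] = -\mathcal{E}(\mu_\theta)/N$ and therefore $\log K_{N,\beta} \geq \theta \mathcal{E}(\mu_\theta) = O(1)$, which is more than enough for the $\liminf$.

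For the upper bound, the guiding observation is the formal identity ${\rm F}_N(X_N, \mu_\theta) = \mathcal{E}({\rm emp}_N - \mu_\theta) - g(0)/N$: if $g$ were bounded at the origin, weak positive definiteness applied to the zero-mass signed measure ${\rm emp}_N - \mu_\theta$ would give ${\rm F}_N \geq -g(0)/N$ and hence $\log K_{N,\beta} \leq \theta|g(0)| = O(1)$. To handle a possible singularity of $g$ at the origin, fix a smooth symmetric nonnegative mollifier $\chi_\eta$ supported in $B(0,\eta)$ and set $g_\eta := g \ast \chi_\eta \ast \chi_\eta$. Then $g_\eta$ is smooth and bounded, and weak positive definiteness is preserved (since $\widehat{g_\eta} = \widehat{g}\,|\widehat{\chi_\eta}|^2$), so applying it to the smeared measure $({\rm emp}_N - \mu_\theta) \ast \chi_\eta$ gives ${\rm F}_N^\eta \geq -g_\eta(0)/N$, where ${\rm F}_N^\eta$ denotes ${\rm F}_N$ with $g$ replaced by $g_\eta$. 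Using the $L^1$ hypothesis on $g$ and the $L^\infty$ control of $\mu_\theta$ furnished by the Euler--Lagrange equation, one then bounds the exponential moment of the regularization error $N\theta ({\rm F}_N - {\rm F}_N^\eta)$ via exchangeability and Cauchy--Schwarz, and chooses $\eta = \eta(N) \to 0$ at a rate such that both $g_\eta(0)$ and the expected error are $o(N)$ after multiplication by $\theta$.

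The main obstacle is precisely this upper bound: controlling the exponential moments of the pair sum $\sum_{i \neq j}(g - g_\eta)(x_i - x_j)$ under the i.i.d.\ law $\mu_\theta^{\otimes N}$ in a way that tolerates the blow-up of $g_\eta(0)$ as $\eta \to 0$. The admissible rate of this balance is dictated by how fast $g - g_\eta$ decays in $L^1$ (or in a sharper norm), and ultimately by the strength of the singularity of $g$ at the origin; under the standing integrability and positive-definiteness assumptions this decay is just enough to close the argument.
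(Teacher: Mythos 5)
Your reduction $K_{N,\beta}=\mathbb{E}_{\mu_\theta^{\otimes N}}\bigl[\exp\bigl(-N\theta\,{\rm F}_N(\cdot,\mu_\theta)\bigr)\bigr]$ is correct, and the Jensen lower bound is complete and clean: $\mathbb{E}_{\mu_\theta^{\otimes N}}[{\rm F}_N]=-\mathcal{E}(\mu_\theta)/N$, so $\log K_{N,\beta}\geq \theta\mathcal{E}(\mu_\theta)=O(1)$, giving $\liminf\geq 0$. But this is a genuinely different route from the paper, which does not estimate the exponential moment directly at all. The paper instead invokes the Laplace principle (Varadhan's lemma applied to the already-known macroscopic LDP for the empirical measure, from the cited reference): taking $f=0$ there yields $\frac{1}{N\theta}\log Z_{N,\beta}\to\mathcal{E}_V^\theta(\mu_\theta)$ in one stroke, and $K_{N,\beta}=e^{o(N)}$ follows from the definition. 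That approach trades self-containedness for generality: it needs no information about the singularity of $g$ beyond what is already encoded in the macroscopic LDP, and in particular it does not require $\mu_\theta\in L^\infty$.

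The upper bound in your argument is where the gap is, and it is real. After mollifying to $g_\eta=g\ast\chi_\eta\ast\chi_\eta$ and applying positive definiteness to get ${\rm F}_N^\eta\geq -g_\eta(0)/N$, what remains is the exponential moment $\mathbb{E}_{\mu_\theta^{\otimes N}}\bigl[\exp\bigl(-\tfrac{\theta}{N}\sum_{i\neq j}(g-g_\eta)(x_i-x_j)\bigr)\bigr]$, together with the requirement $g_\eta(0)=o(N)$. You flag this as the ``main obstacle'' and then assert that ``under the standing integrability and positive-definiteness assumptions this decay is just enough to close the argument,'' but that claim is not substantiated and, as far as I can see, cannot be: the hypotheses on $g$ (merely $L^1$, uniformly continuous off the origin, weakly positive definite, $\mathcal{E}>-\infty$) give no quantitative rate on the singularity at $0$, hence no way to balance the growth of $g_\eta(0)$ against the decay of $g-g_\eta$ to choose $\eta(N)$. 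Even in the concrete Riesz case, after H\"older/exchangeability one is reduced to controlling $\bigl(\int\mu_\theta(y)\exp\bigl(\theta(g_\eta-g)(y-x)\bigr)\,\mathrm{d}y\bigr)^{N-1}$, and making this $e^{o(N)}$ uniformly in $x$ while $g_\eta(0)\to\infty$ is an estimate of essentially the same difficulty as the original partition function asymptotics --- so the reduction does not gain you ground. There is also a secondary issue: you invoke ``the $L^\infty$ control of $\mu_\theta$ furnished by the Euler--Lagrange equation,'' but under the stated hypotheses ($V$ only l.s.c. and $L^1$, $g$ only $L^1$) the identity $\mu_\theta=\exp\bigl(\theta(c-V-2h^{\mu_\theta})\bigr)$ does not by itself give a uniform upper bound, since neither $V$ nor $h^{\mu_\theta}$ need be bounded below.
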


\begin{proof}
The strategy of the proof will be to use the Laplace Principle proved in \cite{garcia2019large}.

Consider the probability measure $\pi,$ defined as 
\begin{equation}
    \mathrm d \pi (y)= \frac{1}{{z}^{*}} \exp \left( - V(y) \right) {\, \mathrm d y},
\end{equation}
where
\begin{equation}
    {z}^{*} = \int_{\mathbb{Q}^{d}} \exp \left( - V(x) \right) \, \mathrm d x.
\end{equation}

Consider also the Hamiltonian
\begin{equation}
    \mathcal{H}^{*}_{N}(X_{N}) = \sum_{i\neq j} g(x_{i}- x_{j}) ,
\end{equation}
with mean field limit
\begin{equation}
    \mathcal{E}^{*} (\mu) = \int_{\mathbb{Q}^{d} \times \mathbb{Q}^{d}} g(x-y) \mathrm d \mu(x) \mathrm d \mu(y). 
\end{equation}

Then by \cite{garcia2019large}, we have that the following Laplace principle holds: for every bounded and continuous function $f :  \mathcal{P}(\mathbb{Q}^{d}) \to \mathbb{R}$
\begin{equation}
    \lim_{N \to \infty} \frac{1}{N \theta} \log \left(  \int_{\mathbb{Q}^{d \times N}} \exp \left( - N \theta f ({\rm emp}_{N}) \right)\, \mathrm  d \gamma_{N} \right) = \inf_{\mu \in \mathcal{P}(\mathbb{Q}^{d})} \{ f(\mu) + F(\mu) \},
\end{equation}
where the probability measure $\gamma_{N}$ is defined as 
\begin{equation}
    \mathrm d \gamma_{N} = \exp \left( - \beta  \mathcal{H}^{*}_{N} \right)  \mathrm d \pi ^{\otimes N}, 
\end{equation}
and $F$ is defined as 
\begin{equation}
    F(\mu) =  \mathcal{E}^{*} (\mu) + \frac{1}{\theta} {\rm ent}[\mu | \pi].
\end{equation}

In particular, taking $f =0,$ we have that 
\begin{equation}
    \lim_{N \to \infty} \frac{1}{N \theta} \log \left(  \int_{\mathbb{Q}^{d \times N}} \exp \left( - \beta \mathcal{H}^{*}_{N} \right) \, \mathrm d \pi ^{\otimes N} \right) = \inf_{\mu \in \mathcal{P}(\mathbb{Q}^{d})} \{  \mathcal{E}^{*} (\mu) + \frac{1}{\theta} {\rm ent}[\mu | \pi] \}.
\end{equation}

Note that for any $\mu \in \mathcal{P}(\mathbb{Q}^{d})$ we have that
\begin{equation}
    \mathcal{E}^{*} (\mu) + \frac{1}{\theta} {\rm ent}[\mu | \pi] =  \mathcal{E}_{V} (\mu) +\frac{1}{\theta} \left( {\rm ent}[\mu] - \log z^{*} \right).
\end{equation}

On the other hand, for any $\mu \in \mathcal{P}(\mathbb{Q}^{d})$ we have that
\begin{equation}
    \frac{1}{N \theta} \log \left(  \int_{\mathbb{Q}^{d \times N}} \exp \left( - \beta  \mathcal{H}^{*}_{N} \right) \, \mathrm d \pi ^{\otimes N} \right) =  \frac{1}{N \theta} \log \left(  \int_{\mathbb{Q}^{d \times N}} \exp \left( - \beta  \mathcal{H}_{N} \right) \, \mathrm d X_{N} \right)  - \frac{ \log z^{*}}{ \theta}.
\end{equation}

Therefore 
\begin{equation}
    \lim_{N \to \infty} \frac{1}{N \theta} \log \left(  \int_{\mathbb{Q}^{d \times N}} \exp \left( - \beta \mathcal{H}_{N} \right) \, \mathrm d X_{N} \right) = \inf_{\mu \in \mathcal{P}(\mathbb{Q}^{d})} \{  \mathcal{E}_{V} (\mu) +\frac{1}{\theta} \left( {\rm ent}[\mu] \right) \},
\end{equation}
which implies that 
\begin{equation}
        \lim_{N \to \infty} \frac{1}{N \theta} \log \left(  Z_{N, \beta} \right) = \mathcal{E}_{V}^{\theta}(\mu_{\theta}),
\end{equation}
and therefore by equation \eqref{eq:defnext} that 
\begin{equation}
    \lim_{N \to \infty} \frac{\log \left( K_{N, \beta} \right)}{N}  = 0.
\end{equation}
\end{proof}

\begin{remark}
    It is possible to give a simpler proof of this result, without introducing $\pi$, $\mathcal{H}^{*}_{N}$, or $\mathcal{E}^{*}$. However, we present this proof because it can be generalized to the case in which $V$ is not necessarily infinite outside of $\mathbb{Q}^{d}$. 
\end{remark}

\subsection{Mean-field compatibility, compactness, and lower semi-continuity}
\label{sect:comp}

We now derive fundamental tools about the energy functional: mean-field compatibility, compactness, and lower semi-continuity.

\begin{lemma}[Mean field compatibility]
\label{l:mfcompatibility}
For any $V: \mathbb{R}^{d} \to \mathbb{R}$ that satisfies items $1-3$ of Theorem \ref{LDPbeta1N}, and any $g$ satisfying items $1-5$ of Theorem \ref{LDPbeta1N}, we have that
    \begin{equation}
    \label{eq:mfcompatibility}
        \min_{\mu \in \mathcal{P}(\mathbb{R}^{d})}   \mathcal{E}_{V} (\mu) = \lim_{N \to \infty} \min_{X_{N} \in \mathbb{Q}^{d \times N}} \left(  \frac{1}{N^{2}}  \mathcal{H}_{N}(X_{N}) \right),
    \end{equation}
    where $\mathcal{H}_{N}$ is given by \eqref{eq:hamilton} and $\mathcal{E}_{V}$ is given by \eqref{meanfieldlimit}. 
\end{lemma}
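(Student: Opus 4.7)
The plan is to prove matching inequalities on the $\limsup$ and $\liminf$ of $\min_{X_N} N^{-2}\mathcal{H}_N(X_N)$, sandwiching both around $\min_{\mathcal{P}(\mathbb{T}^d)} \mathcal{E}_V$. The upper bound will be probabilistic (i.i.d.\ sampling), while the lower bound will follow from a splitting identity around a minimizer of the mean-field energy, using weak positive definiteness as the key input.

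\textbf{Upper bound.} First I would check that $\mathcal{E}_V$ attains its minimum on $\mathcal{P}(\mathbb{T}^d)$ at some $\mu_*$: the direct method applies since the uniform measure has finite energy (as $g,V \in L^1$), and $\mu \mapsto \int V\,d\mu + \mathcal{E}(\mu)$ is lower semi-continuous under weak convergence by items 1--2 of $V$ and items 2--3 of $g$. Then I would sample $X_N = (x_1,\dots,x_N)$ i.i.d.\ from $\mu_*$ and compute directly
\begin{equation}
\mathbb{E}\left[\tfrac{1}{N^2}\mathcal{H}_N(X_N)\right] = \tfrac{N-1}{N}\,\mathcal{E}(\mu_*) + \int_{\mathbb{T}^d} V\,d\mu_* \;\longrightarrow\; \mathcal{E}_V(\mu_*).
\end{equation}
Therefore some deterministic $X_N^\circ$ achieves at most the expected value, yielding $\limsup_N \min_{X_N} N^{-2}\mathcal{H}_N(X_N) \leq \mathcal{E}_V(\mu_*)$.

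\textbf{Lower bound.} By the same argument as Step~3 of the thermal-equilibrium proposition, $\mu_*$ satisfies the Euler--Lagrange inequality $2h^{\mu_*} + V \geq c$ on $\mathbb{T}^d$ with equality $\mu_*$-a.e., where $c = \int(2h^{\mu_*}+V)\,d\mu_*$. For any configuration $X_N$ I would then use the splitting identity (entirely analogous to the thermal splitting, but with $\zeta_\theta$ replaced by $0$):
\begin{equation}
\tfrac{1}{N^2}\mathcal{H}_N(X_N) = \mathcal{E}_V(\mu_*) + \int_{\mathbb{T}^d}\!\bigl(2h^{\mu_*}+V-c\bigr)\,d\,\mathrm{emp}_N + \mathcal{E}(\mathrm{emp}_N - \mu_*) - \tfrac{g(0)}{N}.
\end{equation}
The first correction is $\geq 0$ by the Euler--Lagrange inequality; the second is $\geq 0$ by item~4 (weak positive definiteness), since $\mathrm{emp}_N - \mu_*$ has total mass zero. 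Hence
\begin{equation}
\tfrac{1}{N^2}\mathcal{H}_N(X_N) \;\geq\; \mathcal{E}_V(\mu_*) - \tfrac{g(0)}{N},
\end{equation}
which, once the diagonal term is handled, gives $\liminf_N \min_{X_N} N^{-2}\mathcal{H}_N(X_N) \geq \min \mathcal{E}_V$.

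\textbf{Main obstacle.} The delicate point is the diagonal singularity: when $g(0) = +\infty$ (e.g.\ Coulomb/Riesz type), the identity above is only formal because both $\mathcal{E}(\mathrm{emp}_N - \mu_*)$ and $g(0)/N$ are infinite. My approach would be to regularize the atoms: replace $\mathrm{emp}_N$ by $\mathrm{emp}_N \ast \chi_\eta$, where $\chi_\eta$ is a symmetric mollifier of Fourier-positive type (so weak positive definiteness is preserved for the mollified kernel $g\ast\chi_\eta\ast\chi_\eta$), rerun the splitting identity, and take $\eta\to 0$ jointly with $N\to\infty$ along a diagonal sequence $\eta(N)\downarrow 0$. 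The off-diagonal sum $\tfrac{1}{N^2}\sum_{i\neq j} g(x_i-x_j)$ is compared with its mollified counterpart using uniform continuity of $g$ away from $0$ (item~3) and $L^1$-control on the singular bulk (item~2), while the mollified self-interaction $\tfrac{1}{N}(g\ast\chi_\eta\ast\chi_\eta)(0)$ is finite at every fixed $\eta$ and vanishes in the limit. This regularization is the only genuine technicality, and otherwise the argument follows the clean template outlined above.
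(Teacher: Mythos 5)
The paper does not supply its own proof of this lemma; it is attributed to Borodachov--Hardin--Saff, Theorem~4.2.2. Your proposed proof is therefore an independent argument, and its overall architecture --- upper bound by i.i.d.\ sampling from a minimizer $\mu_*$, lower bound by a splitting identity around $\mu_*$ using weak positive definiteness and the Euler--Lagrange inequality --- is natural and sensible. The upper bound is correct as written (one should check that $\mathcal{E}(\mu_*)<\infty$, but this follows since $\mathcal{E}_V(\text{unif})<\infty$ and both pieces of $\mathcal{E}_V(\mu_*)$ are bounded below).

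However, there is a genuine gap in the lower bound, and you have correctly identified where it sits but not closed it. The splitting identity is exact only when $g(0)<\infty$; in the singular case it is formal, as you note. Your proposed fix --- replace $\mathrm{emp}_N$ by $\mathrm{emp}_N \ast \chi_\eta$ and take $\eta(N)\downarrow 0$ along a diagonal --- requires two competing things: (i) $(g\ast\chi_\eta\ast\chi_\eta)(0)/N\to 0$, which forces $\eta(N)$ not to decay too fast, and (ii) control of the error
\begin{equation}
\frac{1}{N^2}\sum_{i\neq j}\bigl(g - g\ast\chi_\eta\ast\chi_\eta\bigr)(x_i - x_j),
\end{equation}
which is only small when $\eta$ is much smaller than the inter-particle spacing of the (a priori unknown) optimal configuration. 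The ingredients you invoke --- uniform continuity of $g$ away from the origin and $L^1$-integrability --- control the far-diagonal part of this sum but do not control pairs at distance $\lesssim \eta$, because empirical measures are purely atomic and $L^1$ bounds on $g$ do not transfer to sums over atoms. Without an a priori minimal-spacing bound on near-optimal configurations, or a pointwise domination $g\geq g\ast\chi_\eta\ast\chi_\eta$ (which holds for Riesz/Coulomb but is not implied by items 1--5), the two requirements on $\eta(N)$ cannot be reconciled. The standard route around this (and the one underlying the cited reference) is to use a monotone truncation $g_M := \min(g, M)$ rather than mollification: since $g\geq g_M$ pointwise, the off-diagonal energy is immediately bounded below by the truncated one, whose diagonal contributes $M/N\to 0$, and then one passes to $M\to\infty$ via monotone convergence and lower semicontinuity. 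That argument, however, needs $g$ to be lower semicontinuous and bounded below, which is not literally among items 1--5 of Theorem~\ref{LDPbeta1N} (though one expects it in practice); so some care is needed either to derive those properties from the stated hypotheses or to make them explicit.

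In summary: the upper bound and the splitting-identity skeleton are correct, but the treatment of the diagonal singularity via mollification has a real hole that the sketch does not fill; a truncation argument (monotone in the kernel, combined with lower semicontinuity of the continuous energy under weak convergence) is the robust replacement.
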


\begin{proof}
See \cite{borodachov2019discrete}, Theorem 4.2.2.
\end{proof}

\begin{remark}
We thank Ed Saff for introducing us to this result. 
\end{remark}

{\begin{lemma}
\label{lem:convden}
Let $X_{N} \in \mathbb{Q}^{d \times N}$, and let ${\rm emp}_{N}$ and $ \overline{\mathbf{P}}_{N}$ be as in equation \eqref{empm} and Definition \ref{def:tagempfiel} with $\Omega = \mathbb{Q}^{d}$, respectively. If
\begin{equation}
    \overline{\mathbf{P}}_{N}(X_{N}) \to \overline{\mathbf{P}},
\end{equation}
 for some $ \overline{\mathbf{P}} \in \mathcal{P}_{s,1}(\mathbb{Q}^{d} \times {\rm Config})$, then 
\begin{equation}
    {\rm emp}_{N} \rightharpoonup \rho,
\end{equation}
weakly in the sense of probability measures, where $\rho$ is defined as 
\begin{equation}
    \rho (x)  :=  {\rm int}[ \overline{\mathbf{P}}^{x}].
\end{equation}
\end{lemma}

\begin{proof}
Since $\mathbb{Q}^{d}$ is compact, modulo a subsequence (not relabelled),
\begin{equation}
    {\rm emp}_{N} \rightharpoonup \widetilde{\rho},
\end{equation}
weakly in the sense of probability measures for some $\widetilde{\rho} \in \mathcal{P}(\mathbb{Q}^{d})$. We now claim that $\widetilde{\rho} = \rho$. To prove this claim, note that for any measurable set $\Omega \subset \mathbb{Q}^{d} $
\begin{equation}
    \begin{split}
        \int_{\Omega} \mathbf{E}_{ \overline{\mathbf{P}}_{N}^{x}} [{\rm Num}(\square_{1})] \, \mathrm d x &=  \int_{\Omega} \left| \theta_{N^{\frac{1}{d}} x} X_{N}' \big|_{\square_{1}} \right| \, \mathrm d x \\
        &\geq \int_{\Omega} {\rm emp}_{N} \, \mathrm d x,
    \end{split}
\end{equation}
where ${\rm Num}(\square_{1})$ denotes the number of points of a point configuration in $\square_{1}$. 

Letting $N$ tend to $\infty$ and using the definition of intensity and weak convergence, we have that for any $\Omega \subset \mathbb{Q}^{d}$,
\begin{equation}
    \int_{\Omega} \widetilde{\rho}(x) \, \mathrm d x  \leq \int_{\Omega} {\rm int}[ \overline{\mathbf{P}}^{x}] \, \mathrm d x.
\end{equation}

On the other hand, by Definition \ref{def:disintegration} and Definition \ref{def:intensity},
\begin{equation}
\begin{split}
 \int_{\mathbb{Q}^{d}} {\rm int}[ \overline{\mathbf{P}}^{x}] \, \mathrm d x &=\lim_{N \to \infty} \int_{\mathbb{Q}^{d}} {\rm int}[ \overline{\mathbf{P}}_{N}(X_{N})^{x}] \, \mathrm d x \\
&= \lim_{N \to \infty} \int_{\mathbb{Q}^{d}}  \mathbf{E}_{ \overline{\mathbf{P}}_{N}(X_{N})^{x}}[|C|\square_{1}] \, \mathrm d x \\
&= \lim_{N \to \infty} \int_{\mathbb{Q}^{d}} [|\theta_{N^{\frac{1}{d}}x} \cdot X_{N}'|\square_{1}] \, \mathrm d x \\
&= \lim_{N \to \infty} \int_{\mathbb{Q}^{d}}\, \mathrm d {\rm emp}_{N} \\
&=  \int_{\mathbb{Q}^{d}} \widetilde{\rho}(x) \, \mathrm d x. 
\end{split}    
\end{equation}
Therefore 
\begin{equation}
    \int_{\mathbb{Q}^{d}} \widetilde{\rho}(x) \, \mathrm d x=1,  
\end{equation}
which implies that 
\begin{equation}
    \widetilde{\rho} (x) = {\rm int}[ \overline{\mathbf{P}}^{x}].
\end{equation}

Since every subsequence of ${\rm emp}_{N}$ has a further subsequence that converges to $\rho$, we conclude that ${\rm emp}_{N}$ itself converges to $\rho$.  
\end{proof}

\begin{lemma}
\label{lem:convden2}
Let $X_{N} \in \mathbb{Q}^{d \times N}$, and let ${\rm emp}_{N}$ and $ \overline{\mathbf{P}}_{N}$ be as in equation \eqref{empm} and Definition \ref{def:tagempfiel} with $\Omega = \mathbb{Q}^{d}$, respectively. Then a subsequence (not relabelled) satisfies that
\begin{equation}
    \overline{\mathbf{P}}_{N}(X_{N}) \to \overline{\mathbf{P}},
\end{equation}
for some $ \overline{\mathbf{P}} \in \mathcal{P}_{s,1}(\mathbb{Q}^{d} \times {\rm Config})$, and that
\begin{equation}
    {\rm emp}_{N} \rightharpoonup \rho,
\end{equation}
weakly in the sense of probability measures, where 
\begin{equation}
    \rho := {\rm int}[ \overline{\mathbf{P}}^{x}].
\end{equation}
\end{lemma}

\begin{proof}
Since $\mathbb{Q}^{d}$ is a compact space, by Proposition 3.5 of \cite{hardin2018large},
\begin{equation}
    \overline{\mathbf{P}}_{N}(X_{N}) \to \overline{\mathbf{P}},
\end{equation}
for some $ \overline{\mathbf{P}} \in \mathcal{P}_{s}(\mathbb{Q}^{d} \times {\rm Config})$. Proposition 3.5 of \cite{hardin2018large} actually deals with a Riesz interaction on Euclidean space. However, the proof of this specific result does not rely on the interaction or the space. We quote the proof from Proposition 3.5 of \cite{hardin2018large}: ``It is not hard to check that $\{\overline{P}_{N}\}_{N}$ converges (up to extraction) to some $\overline{P}$ in $\overline{\mathcal{M}}(\overline{\mathcal{X}})$ (indeed, the average number of points per unit volume is constant, which implies tightness, see, e.g., [18, Lemma 4.1]) whose stationarity is clear (see again, e.g., [18])." In our notation, $\{\overline{P}_{N}\}_{N}$ corresponds to $ \overline{\mathbf{P}}_{N}(X_{N})$, and the set $\overline{\mathcal{M}}(\overline{\mathcal{X}})$ corresponds to $\mathcal{P}_{s}(\mathbb{Q}^{d} \times {\rm Config})$. Reference [18] in the reference list of \cite{hardin2018large} is \cite{leble2017large} in our reference list. 

Lemma \ref{lem:convden} then implies that,
\begin{equation}
    {\rm emp}_{N} \rightharpoonup \rho,
\end{equation}
weakly in the sense of probability measures, which implies that $ \overline{\mathbf{P}} \in \mathcal{P}_{s,1}(\mathbb{Q}^{d} \times {\rm Config})$.
\end{proof}}

Having proved compactness, we now turn to prove lower semi-continuity.

\begin{lemma}\label{lem:l.s.c.}
Let $X_{N} \in \mathbb{R}^{d}$ be such that
\begin{equation}
     {\rm emp}_{N} \rightharpoonup \mu
\end{equation}
weakly in the sense of probability measures for some $\mu \in \mathcal{P}(\mathbb{Q}^{d})$. Then, if either $\nu \in \mathcal{P}(\mathbb{Q}^{d}) \cap L^{\infty}(\mathbb{Q}^{d}) $, or $\mu = \nu$, we have that
\begin{equation}
\label{eq:lsc}
    \mathcal{E}(\mu - \nu) \leq \liminf_{N \to \infty}  {\rm F}_{N}({\rm emp}_{N} , \nu). 
\end{equation}
\end{lemma}

\begin{proof}
\textbf{Step 1}[Case $\mu = \nu$]. 

First, we claim that $h^{\nu} $ is lower semicontinuous. To prove this claim, let $x_n \to x \in \mathbb{Q}^{d}$. Note that the sequence of functions $f_n (y):= g(x_n - y) \nu(y)$ converges pointwise to $f (y):= g(x - y) \nu(y)$. Hence, by Fatou's lemma, 
\begin{equation}
    \begin{split}
        h^{\nu}(x) &= \int_{\mathbb{Q}^{d}} f_n (y) \, \mathrm d y \\
        &\leq \liminf \int_{\mathbb{Q}^{d}} f_n (y) \, \mathrm d y \\
        &\leq \liminf  h^{\nu}(x_n).  
    \end{split}
\end{equation}
Note also that $h^{\nu} \in L^{1}(\mathbb{Q}^{d})$ if $\mu \in \mathcal{P}(\mathbb{Q}^{d})$. 

Using mean field compatibility (Lemma \ref{l:mfcompatibility}) with $V(x) = h^{\nu}(x) + \mathcal{E}(\nu) + \infty \mathbf{1}_{\mathbb{R}^{d} \setminus \mathbb{Q}^{d}}$, we then have that
\begin{equation}
    \begin{split}
        \liminf_{N \to \infty} {\rm F}_{N}({\rm emp}_{N}(X_{N}), \nu) &\geq \liminf_{N \to \infty} \min_{Y_{N} \in \mathbb{Q}^{d \times N}} {\rm F}_{N}({\rm emp}_{N}(Y_{N}), \nu) \\
        &\geq \min_{\rho \in \mathcal{P}(\mathbb{Q}^{d})} \mathcal{E}(\rho - \nu)\\
        &= 0 \\
        &= \mathcal{E}(\mu - \nu).
    \end{split}
\end{equation}

\textbf{Step 2}[General case]

For the general case, we write
\begin{equation}
    {\rm F}_{N}({\rm emp}_{N}(X_{N}), \nu) =  {\rm F}_{N}({\rm emp}_{N}(X_{N}) , \mu) + 2 \mathcal{G} ({\rm emp}_{N}(X_{N}) - \mu, \mu - \nu) + \mathcal{E} (\mu - \nu). 
\end{equation}

By Step 1, we have that 
\begin{equation}
   \liminf {\rm F}_{N}({\rm emp}_{N}(X_{N}) , \mu) \geq 0.
\end{equation}

We now claim that. 
\begin{equation}
    \liminf \mathcal{G} ({\rm emp}_{N}(X_{N}) - \mu, \mu - \nu) \geq 0.
\end{equation}

To prove this claim, note that $h^{\mu}$ is bounded below. Since $h^{\mu}$ is also lower semicontinuous, we have by definition of weak convergence that 
\begin{equation}
    \int_{\mathbb{Q}^{d}} \mu (x)h^{\mu}(x) \, \mathrm d x \leq \liminf  \int_{\mathbb{Q}^{d}} h^{\mu}(x) \, \mathrm d {\rm emp}_{N}. 
\end{equation}

Note also that $h^{\nu} \in L^{\infty}$. We claim that $h^{\nu}$ is also continuous. To prove this claim, let $x_n \to x \in \mathbb{Q}^{d}$. Note that we may write 
\begin{equation}
        h^{\nu} (x) -  h^{\nu} (x_n) =  \int_{\mathbb{Q}^{d}} (g(x_n - y) -g(x - y))\nu(y)  \, \mathrm d y .
\end{equation}

Hence, for any $\delta >0$, 
\begin{equation}
\begin{split}
        &\left|  h^{\nu} (x) -  h^{\nu} (x_n)  \right| \\
        \leq & \left|  \int_{\mathbb{Q}^{d} B(x, \delta)} (g(x_n - y) -g(x - y))\nu(y)  \, \mathrm d y  \right|+  \left|  \int_{  B(x, \delta)} (g(x_n - y) -g(x - y))\nu(y)  \, \mathrm d y  \right|.
\end{split}
\end{equation}

Note that 
\begin{equation}
    \lim_{n \to \infty} \left|  \int_{\mathbb{Q}^{d} \setminus B(x, \delta)} (g(x_n - y) -g(x - y))\nu(y)  \, \mathrm d y  \right|=0,
\end{equation}
since $g$ is uniformly continuous on $\mathbb{Q}^{d} \setminus B(0, \delta)$ by property $3$ of $g$ (see Theorem \ref{LDPbeta1N}). 

On the other hand, 
\begin{equation}
\label{eq:1}
    \begin{split}
        &\limsup_{n \to \infty}  \left|  \int_{ B(x, \delta)} (g(x_n - y) -g(x - y))\nu(y)  \, \mathrm d y  \right|\\
        \leq & 2 \limsup_{n \to \infty} \| \nu \|_{L^{\infty}} \int_{ B(0, \delta + |x - x_n|)} \left| g(y) \right| \, \mathrm d y  \\
        \leq & 2 \| \nu \|_{L^{\infty}} \int_{ B(0, \delta)} \left| g(y) \right| \, \mathrm d y.  
    \end{split}
\end{equation}
Since equation \eqref{eq:1} is valid for any $\delta >0$, and 
\begin{equation}
    \lim_{\delta \to 0} \int_{ B(0, \delta)} \left| g(y) \right| \, \mathrm d y,
\end{equation}
we may conclude that 
\begin{equation}
   \lim_{n \to \infty}  \left|  h^{\nu} (x) -  h^{\nu} (x_n)  \right| =0,
\end{equation}
and hence that $h^{\nu}$ is continuous.  

Since $h^{\nu}$ is continuous and bounded, we have by definition of weak convergence that 
\begin{equation}
    \int_{\mathbb{Q}^{d}} \mu (x)h^{\nu}(x) \, \mathrm d x = \lim  \int_{\mathbb{Q}^{d}} h^{\nu}(x) \, \mathrm d {\rm emp}_{N}. 
\end{equation}

From this, we may conclude equation \eqref{eq:lsc}. 
\end{proof}

\section{Proof of Proposition \ref{LDPnoninteracting}}

We now turn to prove Proposition \ref{LDPnoninteracting}, which is a particular case of Theorem \ref{LDPbeta1N}. This proposition will be a necessary step in the proof of Theorem \ref{LDPbeta1N}. 

The proof will require a previous result from \cite{leble2017large} that allows us to focus only on the local behaviour of a point process, by approximating an arbitrary point configuration, by its restriction to a compact set.
\begin{lemma}\label{lem:lebser1}
For any $\delta>0$ there exists an $R>0$ such that 
\begin{equation}
\label{eq:localizationpoints}
    \sup_{F \in {\rm Lip}_{1}({\rm Config})} \sup_{C \in {\rm Config}} \left| F(C) - F(C \cap \square_{R}) \right| < \delta.
\end{equation}
\end{lemma}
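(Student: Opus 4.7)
The plan is to exploit the built-in localization structure of the distance $d_{\rm Config}$: since the defining series $\sum_{k\ge 1} 2^{-k}(\cdots)$ weights each scale $k$ by $2^{-k}$, contributions from scales beyond $R$ are automatically exponentially small. My strategy is to show that $d_{\rm Config}(C, C \cap \square_{R})$ itself tends to zero as $R \to \infty$, uniformly in $C$, and then invoke the Lipschitz-$1$ property of $F$ to transfer this bound to $|F(C) - F(C\cap\square_{R})|$.

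The key observation is that $C$ and $C\cap\square_{R}$ coincide inside $\square_{R}$. Consequently, for every $k \le R$ the restrictions $C|_{\square_{k}}$ and $(C\cap\square_{R})|_{\square_{k}}$ are identical, so the numerator $\int_{\square_{k}} f\, \mathrm d(C - C\cap\square_{R})$ vanishes for every admissible $f$, killing the entire $k$-th summand in the series. For $k > R$, the normalization by $|C|(\square_{k}) + |C\cap\square_{R}|(\square_{k})$ forces the ratio to be at most $1$ in absolute value (because $f$ is bounded by $1$ and the signed integral is trivially dominated by the total mass in $\square_{k}$). Summing what survives gives
\begin{equation}
d_{\rm Config}(C, C\cap\square_{R}) \;\le\; \sum_{k=R+1}^{\infty} \frac{1}{2^{k}} \;=\; 2^{-R}.
\end{equation}

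To conclude, given $\delta>0$ pick $R$ with $2^{-R} < \delta$. Then uniformly in $F \in {\rm Lip}_{1}({\rm Config})$ and $C \in {\rm Config}$,
\begin{equation}
|F(C) - F(C\cap\square_{R})| \;\le\; d_{\rm Config}(C, C\cap\square_{R}) \;\le\; 2^{-R} \;<\; \delta,
\end{equation}
which is exactly \eqref{eq:localizationpoints}. I do not expect any serious obstacle; the only minor point of care is handling the convention when $|C|(\square_{k}) + |C\cap\square_{R}|(\square_{k}) = 0$, which occurs only if both configurations are empty on $\square_{k}$, in which case the numerator is also zero and the term is simply defined to be $0$. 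Everything else is the bookkeeping sketched above.
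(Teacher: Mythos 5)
Your proof is correct, and it is essentially the argument behind Lemma 2.1 of the cited reference (the paper here simply defers to that source): the metric $d_{\rm Config}$ is built precisely so that the $k$-th term vanishes when $C$ and $C\cap\square_R$ agree on $\square_k$ (i.e.\ for $k\le R$) and is trivially bounded by $2^{-k}$ for $k>R$, giving $d_{\rm Config}(C,C\cap\square_R)\le 2^{-\lfloor R\rfloor}$ uniformly in $C$, after which the $1$-Lipschitz property of $F$ finishes the job. You have also correctly flagged the only degenerate case (empty configurations on $\square_k$), where the term is $0$ by convention.
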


\begin{proof}
See \cite{leble2017large}, Lemma 2.1.
\end{proof}

It is clear that, given $C \in {\rm Config}$, we can find $R$ satisfying equation \eqref{eq:localizationpoints}. The point of Lemma \ref{lem:lebser1} is that is shows that there exists an $R$ such that equation \eqref{eq:localizationpoints} holds \emph{for all} $C \in {\rm Config}$.

We now embark on the proof of Proposition \ref{LDPnoninteracting}, which we restate here for convenience. 

\begin{proposition}[LDP for non-interacting particles at high temperature]
Assume that $g=0$, $\beta = \frac{\theta}{N}$ for fixed $\theta>0$, and $V$ satisfies items $1-3$ of Theorem \ref{LDPbeta1N}. Define $\mu_{\theta}$ by \eqref{def:theqmeas} and define $ \overline{\mathbf{P}}_{N}$ by Definition~\ref{def:tagempfiel} with $\Omega = \mathbb{Q}^{d}$. Then the push-forward of $\mathbf{P}_{N, \beta}$ (equation \eqref{Gibbs}) by $ \overline{\mathbf{P}}_{N}$ satisfies an $LDP$ in $ \mathcal{P}(\mathbb{Q}^{d} \times {\rm Config})$ at speed $N$ and rate function
\begin{equation}
  \mathcal{F}( \overline{\mathbf{P}}) =\begin{cases}
      \overline{{\rm Ent}}[ \overline{\mathbf{P}} | \overline{\mathbf{\Pi}}^{\mu_{\theta}}] \ \ \ \ \ \ \ \ \ \ \ \ \ \ \ \ \ \ \, \ \textrm{if}\ \ \  \overline{\mathbf{P}} \in \mathcal{P}_{s,1}(\mathbb{Q}^{d} \times {\rm Config}),\\
        \infty \ \ \ \ \ \ \ \ \ \ \ \ \ \ \ \ \ \ \ \ \ \ \ \ \ \ \ \ \ \ \ \textrm{if}\ \ \  \overline{\mathbf{P}} \notin \mathcal{P}_{s,1}(\mathbb{Q}^{d} \times {\rm Config}),
  \end{cases} 
\end{equation}
\end{proposition}

The proof will consist of three steps. The first one is an LDP for the empirical field on an inhomogeneous Poisson Point Process. The second one is an LDP for the \emph{tagged} empirical field of an inhomogeneous Poisson Point Process. The third and final one is an LDP for Gibbs measure of a Hamiltoninan with no repulsive interaction, instead of an inhomogeneous Poisson Point Process.

\begin{proof}

{
\textbf{Step 1}[LDP for the empirical field of a Poisson Point Process with variable intensity]

Let $\Lambda_{N} = N^{\frac{1}{d}} \mathbb{Q}^{d}$ and let $\phi$ be a positive continuous function on $\mathbb{Q}^{d}$. Let $\mathbf{\Pi}_{N}$ be a Poisson Point Process on $\Lambda_{N}$ with intensity $\phi (N^{-\frac{1}{d}} x )$. Let ${\mathbf{F}}_{N}^{\phi}$ be defined as 
\begin{equation}
    {\mathbf{F}}_{N}^{\phi}(C) = \frac{1}{ T^{d}} \int_{\mathbb{Q}^{d}} \delta_{\alpha_{\phi(x)} \left(\theta_{N^{\frac{1}{d}}x} \cdot C \right)} \, \mathrm d x,
\end{equation}
 where $\alpha_{\lambda}: {\rm Config} \to {\rm Config}$ is a dilation by $\lambda$. Then we claim that the push-forward of $\mathbf{\Pi}_{N}$ by ${\mathbf{F}}_{N}$ satisfies an LDP in $ \mathcal{P}({\rm Config})$ at speed $N$ with rate function
\begin{equation}
    \mathcal{F}({\mathbf{P}}) = 
         {\rm Ent}[{\mathbf{P}} | {\mathbf{\Pi}}^{1}].  
\end{equation}

In the case $\phi=1$, this is \cite[Proposition 7.5]{leble2017large} and it follows from \cite[Theorem 3.1]{georgii1993large}, along with \cite[Remark 2.4]{georgii1993large} to get rid of the periodization in the definition of the empirical field. In the case of a general $\phi,$ let $S>0$. Then the distribution of $\alpha_{\phi(x)} \left(\theta_{N^{\frac{1}{d}}x} \cdot C\right) \Big|_{\square_S}$ is a Poisson Point Process with density uniformly close to $1$, since $\phi$ is uniformly continuous, and so rare events are similar. We make this intuition rigorous by proving exponential equivalence. 

\textbf{Substep 1.1}[Exponential equivalence]

We claim that the push-forward of $\mathbf{\Pi}_{N}$ by 
\begin{equation}
     {\mathbf{F}}_{N}^{^{\phi},S}(C) := \frac{1}{ T^{d}} \int_{\mathbb{Q}^{d}} \delta_{\alpha_{\phi(x)} \left(\theta_{N^{\frac{1}{d}}x} \cdot C \right)\Big|_{\square_S}} \, \mathrm d x,
\end{equation}
and the push-forward of $\mathbf{\Pi^{1}}$ by 
\begin{equation}
     {\mathbf{F}}_{N}(C) := \frac{1}{ T^{d}} \int_{\mathbb{Q}^{d}} \delta_{ \left(\theta_{N^{\frac{1}{d}}x} \cdot C \right)\Big|_{\square_S} } \, \mathrm d x,
\end{equation}
are exponentially equivalent.

To prove this claim, divide $\mathbb{Q}^{d}$ into hypercubes of size $S N^{-\frac{1}{d}}$, denoted $Q^{S}_{i}$, with centers $x^{s}_{i}$. Arguing as in the proof of \cite[Lemma 6.5]{leble2017large}, it suffices to prove that the discrete approximation, the push-forward of $\mathbf{\Pi}_{N}$ by
\begin{equation}
     {\mathbf{F}}_{N, \rm disc }^{\phi, S}(C) := \left \lfloor{\frac{ S}{T N^{\frac{1}{d}}}}\right \rfloor^{d}  \sum_{i} \delta_{\alpha_{\phi(x_{i})} \left(\theta_{N^{\frac{1}{d}}x_{i}} \cdot C\right)\Big|_{\square_S} } 
\end{equation}
is exponentially equivalent to the push-forward of $\mathbf{\Pi^{1}}$ by ${\mathbf{F}}_{N}$. 

To prove this, define the function $\phi_S$ as 
\begin{equation}
    \phi_S (x) = \frac{\phi(N^{-\frac{1}{d}}x)}{\phi(N^{-\frac{1}{d}}x_{i})}, \ \ \ \ \ x \in N^{\frac{1}{d}}Q^{S}_{i}.
\end{equation}
consider the Poisson Point Process of intensity $1$ on $\Lambda_{N} \times (0,\infty)$, denoted $\Pi^{1}_{d+1}$. Consider consider two operations from ${\rm Config}(\mathbb{R}^{d+1}) \to {\rm Config}(\mathbb{R}^{d})$, denoted $ \pi_{1}$ and $\pi_{\phi}$. The operation $\pi_{1}$ is defined as projecting a point onto $\mathbb{R}^{d}$, if the last component of the position is smaller than $1$. The operation $\pi_{\phi}$ is defined as projecting a point onto $\mathbb{R}^{d}$, if the last component of the position is smaller than $\phi_{S}$ at that point. Note that the push-forward of $\Pi^{1}_{d+1}$ by $\pi_{1}$ is a Poisson Point Process of intensity $1$, and the push-forward of $\Pi^{1}_{d+1}$ by $\pi_{\phi}$ is a Poisson Point Process of intensity $\phi_{S}$. Note also that the pushforward of the pushforward of $\Pi^{1}_{d+1}$ by $\pi_{\phi}$ by 
\begin{equation}
      {\mathbf{F}}_{N, \rm disc }^{S}(C) :=  \left \lfloor{\frac{ S}{T N^{\frac{1}{d}}}}\right \rfloor^{d} \sum_{i} \delta_{ \left(\theta_{N^{\frac{1}{d}}x_{i}} \cdot C \right) \Big|_{\square_S}} 
\end{equation}
has the same distribution as the push-forward of $\mathbf{\Pi}_{N}$ by ${\mathbf{F}}_{N, \rm disc }^{\phi, S}$. 

Let $\delta > 0$. In order for a realization of the pushforward of the pushforward of $\Pi^{1}_{d+1}$ by $\pi_{\phi}$ by $ {\mathbf{F}}_{N, \rm disc }^{S}$ and a realization of the pushforward of the pushforward of $\Pi^{1}_{d+1}$ by $\pi_{1}$ by $ {\mathbf{F}}_{N, \rm disc }^{S}$ to be at distance greater than $\delta$ from each other, there would have to be a fraction of size $\delta$ of squares $N^{\frac{1}{d}}Q^{S}_{i}$ such that $\pi_{1}$ and $\pi_{\phi}$ are different. This implies that there is at least one point in $N^{\frac{1}{d}}Q^{S}_{i} \times (0, \infty)$ such that the last component is between $\min_{x \in N^{\frac{1}{d}}Q^{S}_{i}} \phi_S$ and $\max_{x \in N^{\frac{1}{d}}Q^{S}_{i}} \phi_S$. This event has probability tending to $0$ as $N$ tends to $\infty$ uniformly in $i$, since $\phi$ is uniformly continuous. Denote this probability by $\epsilon(N)$. 
Then the probability that the realizations are at distance greater than $\delta$ from each other is bounded by ${N\choose \delta N}(\epsilon(N))^{\delta N }$, accounting for the possible combinations of squares in which the realizations differ. Recall the elementary bound
\begin{equation}
    {N\choose \delta N} \leq \left( \frac{1}{\delta} \right)^{C N}
\end{equation}
for some absolute constant $C$. Then the probability that the realizations are at distance greater than $\delta$ from each other (denoted $P_{\delta}$) satisfies that
\begin{equation}
    \begin{split}
        \lim_{N \to \infty} \frac{1}{N} \log \left(P_{\delta} \right) &\leq  \lim_{N \to \infty} \frac{1}{N}  \left( C N \log \left( \frac{1}{\delta} \right) + N \log \left(\epsilon(N)\right)  \right)\\
        &= - \infty.
    \end{split}
\end{equation}
Therefore exponential equivalence is proved. 

\textbf{Substep 1.2}[Conclusion of step 1]

By \cite[Proposition 7.5]{leble2017large} and \cite[Theorem 4.2.13]{dembo2009large}, the push-forward of $\mathbf{\Pi}_{N}$ by 
${\mathbf{F}}_{N}^{^{\phi},S}$  satisfies an LDP  at speed $N$ with rate function
\begin{equation}
    \mathcal{F}({\mathbf{P}}) = 
         {\rm Ent}[{\mathbf{P}} | {\mathbf{\Pi}}^{1}].  
\end{equation}

Note that, by Lemma \ref{lem:lebser1}, $d_{\rm Config}({\mathbf{F}}_{N}^{S}(C) ,{\mathbf{F}}_{N}(C) ) $ tends to $0$ uniformly in $C$, as $S$ tends to $\infty$. From this we may conclude the LDP for the push-forward of $\mathbf{\Pi}_{N}$ by 
${\mathbf{F}}_{N}^{^{\phi}}$, and we may conclude step 1. 

Note that, as an immediate consequence, we may infer that, for any $\lambda > 0$,  the push-forward of $\mathbf{\Pi}_{N}$ by 
\begin{equation}
     C \mapsto \frac{1}{ T^{d}} \int_{\mathbb{Q}^{d}} \delta_{\alpha_{\frac{\phi(x)}{\lambda}} \left(\theta_{N^{\frac{1}{d}}x} \cdot C \right)} \, \mathrm d x
\end{equation} satisfies an LDP in $ \mathcal{P}({\rm Config})$ at speed $N$ with rate function
\begin{equation}
    \mathcal{F}({\mathbf{P}}) = 
         {\rm Ent}[{\mathbf{P}} | {\mathbf{\Pi}}^{\lambda}].  
\end{equation}

\textbf{Step 2}[LDP for the tagged empirical field of a Poisson Point Process with variable intensity]

Let $\Lambda_{N} = N^{\frac{1}{d}} \mathbb{Q}^{d}$ and let $\phi$ be a positive continuous function on $\mathbb{Q}^{d}$. Let $\mathbf{\Pi}_{N}$ be a Poisson Point Process on $\Lambda_{N}$ with intensity $\phi (N^{-\frac{1}{d}} x )$. Let $\overline{\mathbf{F}}_{N}$ be defined as 
\begin{equation}
    \overline{\mathbf{F}}_{N}(C) = \frac{1}{ T^{d}} \int_{\mathbb{Q}^{d}} \delta_{ \left( x,\theta_{N^{\frac{1}{d}}x} \cdot C \right)} \, \mathrm d x,
\end{equation}
 Let $\mathfrak{R}_{N}$ be the push-forward of $\mathbf{\Pi}_{N}$ by $\overline{\mathbf{F}}_{N}$. Then we claim that $\mathfrak{R}_{N}$ satisfies an LDP in $ \mathcal{P}_{s}(\mathbb{Q}^{d} \times {\rm Config})$ at speed $N$ with rate function
\begin{equation}
    \mathcal{F}( \overline{\mathbf{P}}) = \begin{cases}
         \overline{\rm Ent}[\overline{\mathbf{P}} | \overline{\mathbf{\Pi}}^{\phi}] \ \ \text{if}\ \  \overline{\mathbf{P}} \in \mathcal{P}_{s}(\Lambda \times {\rm Config}) \\
        \infty \ \ \ \ \ \ \ \ \ \ \ \ \ \, \text{if}\ \  \overline{\mathbf{P}} \notin \mathcal{P}_{s}(\Lambda \times {\rm Config}).
    \end{cases}
\end{equation}
We will prove the claim in a few substeps.

\textbf{Substep 2.1}[Approximation by piece-wise constant functions]

Let $m \in \mathbb{N}$, and let $\{Q_{i}\}_{i \in \{1, ...m\}^{d}}$ be a partition of $\mathbb{Q}^{d}$ into $m^{d}$ hypercubes of size $\frac{T}{m}$. We define the operation $A_{m} : \mathcal{P}_{s}(\mathbb{Q}^{d} \times {\rm Config}) \to \mathcal{P}_{s}(\mathbb{Q}^{d} \times {\rm Config})$ as assigning to a tagged empirical field its approximation by a piecewise constant one on each hypercube: for each $x \in Q_{i}$, $A_{m} ( \overline{\mathbf{P}}) (x, C) $ is defined as
\begin{equation}
    A_{m} ( \overline{\mathbf{P}} ) (x, C) : = \fint_{Q_{i}} \overline{\mathbf{P}} (x, C) \mathrm d x.
\end{equation}
We claim that, for any $\overline{\mathbf{P}} \in \mathcal{P}(\Lambda \times {\rm Config})$, $d_{\mathcal{P}(\Omega \times {\rm Config})} ( \overline{\mathbf{P}}, A_{m} ( \overline{\mathbf{P}} ) ) \leq \frac{T}{m}$. To show this, let $F \in {\rm Lip}_{1}(\Omega \times {\rm Config})$, and define $F_{m}$ as the piece-wise constant approximation of $F$:
\begin{equation}
    F_{m} = \sum_{i \in \{1, ...m\}^{d}} \mathbf{1}_{Q_{i}} \fint_{Q_{i}} F \, \mathrm d x .
\end{equation}
Note that $\|F - F_{m}\|_{L^{\infty}} \leq \frac{T}{m}$. Then 
\begin{equation}
    \begin{split}
         \left| \int F \, \mathrm d ( \overline{\mathbf{P}} - A_{m} (\overline{\mathbf{P}} ) ) \right| &= \left| \int F - F_{m} \, \mathrm d \overline{\mathbf{P}} \right| \\
         &\leq \frac{T}{m},
    \end{split}
\end{equation}
where the last line follows because $\overline{\mathbf{P}}$ is a probability measure. 

\textbf{Substep 2.2}[Conclusion of step 2]

Define $\phi_{m}$ as the piece-wise constant approximation of $\phi$ on the grid $Q_{i}$, and $\overline{\mathbf{F}}_{N}$ as 
\begin{equation}
    \overline{\mathbf{F}}_{N}^{\phi_{m}}(C) = \frac{1}{ T^{d}} \int_{\mathbb{Q}^{d}} \delta_{ \left( x, \alpha_{\frac{\phi(x)}{\phi_{m}(x)}} \theta_{N^{\frac{1}{d}}x} \cdot C \right)} \, \mathrm d x.
\end{equation}
Note that as $m \to \infty$, $d_{\mathcal{P}(\Omega \times {\rm Config})} ( \overline{\mathbf{F}}_{N}(C) ,\overline{\mathbf{F}}_{N}^{\phi_{m}}(C) ) $ tends to $0$ uniformly in $C$ and $N$, since $\frac{\phi(x)}{\phi_{m}(x)}$ is uniformly close to $1$. Denote by 
\begin{equation}
    \epsilon(m) := \sup_{N, C} d_{\mathcal{P}(\Omega \times {\rm Config})} ( \overline{\mathbf{F}}_{N}(C) ,\overline{\mathbf{F}}_{N}^{\phi_{m}}(C) ).  
\end{equation}

Then, for any $\overline{\mathbf{P}} \in \mathcal{P}_{s}(\Lambda \times {\rm Config})$ and $\delta > 0$, 
\begin{equation}
\label{eq:LDPupbound}
    \mathbf{\Pi}_{N} \left( \overline{\mathbf{F}}_{N}(C) \in B \left (\overline{\mathbf{P}}, \delta \right)  \right) \leq \mathbf{\Pi}_{N} \left( A_{m} \left( \overline{\mathbf{F}}_{N}^{\phi_{m}}(C) \right) \in B \left ( A_{m} \left(\overline{\mathbf{P}} \right), \delta + \frac{2T}{M} + \epsilon(m) \right)  \right),
\end{equation}
and similarly, 
\begin{equation}
\label{eq:LDPlowbound2}
    \mathbf{\Pi}_{N} \left( \overline{\mathbf{F}}_{N}(C) \in B \left (\overline{\mathbf{P}}, \delta \right)  \right) \geq \mathbf{\Pi}_{N} \left( A_{m} \left( \overline{\mathbf{F}}_{N}^{\phi_{m}}(C) \right) \in B \left ( A_{m} \left(\overline{\mathbf{P}} \right), \delta - \frac{2T}{M} - \epsilon(m) \right)  \right).
\end{equation}

Letting $N$ tend to $\infty$ in equation \eqref{eq:LDPupbound}, and using step 1 and that $\mathrm{Ent}$ is affine,
\begin{equation}
\label{eq:LDPnearlythere}
    \lim_{N \to \infty} \frac{1}{N} \log \left( \mathbf{\Pi}_{N} \left( \overline{\mathbf{F}}_{N}(C) \in B \left (\overline{\mathbf{P}}, \delta \right)  \right) \right) \leq  \inf_{\overline{\mathbf{P}}' \in B \left ( A_{m} \left(\overline{\mathbf{P}} \right), \delta + \frac{2T}{M} + \epsilon(m) \right) } \overline{\rm Ent}[\overline{\mathbf{P}}' | \overline{\mathbf{\Pi}}^{\phi_{m}}].    
\end{equation}
Letting $m$ tend to $\infty$ in equation \eqref{eq:LDPnearlythere}, 
\begin{equation}
\label{eq:LDPfinally}
    \lim_{N \to \infty} \frac{1}{N} \log \left( \mathbf{\Pi}_{N} \left( \overline{\mathbf{F}}_{N}(C) \in B \left (\overline{\mathbf{P}}, \delta \right)  \right) \right) \leq  \inf_{\overline{\mathbf{P}}' \in B \left ( \overline{\mathbf{P}} , \delta \right) } \overline{\rm Ent}[\overline{\mathbf{P}}' | \overline{\mathbf{\Pi}}^{\phi}].  
\end{equation}
Proceeding similarly with equation \eqref{eq:LDPlowbound2}, 
\begin{equation}
    \lim_{N \to \infty} \frac{1}{N} \log \left( \mathbf{\Pi}_{N} \left( \overline{\mathbf{F}}_{N}(C) \in B \left (\overline{\mathbf{P}}, \delta \right)  \right) \right) \geq  \inf_{\overline{\mathbf{P}}' \in B \left ( \overline{\mathbf{P}} , \delta \right) } \overline{\rm Ent}[\overline{\mathbf{P}}' | \overline{\mathbf{\Pi}}^{\phi}].  
\end{equation}
}

\textbf{Step 3}[Conclusion: from Poisson to Bernoulli]

We now prove the statement of the Theorem. Let $\mu_{\theta}$ be as defined by \eqref{def:theqmeas}. Let $\mathbf{\Pi}_{N}$ be a Poisson Point Process on $N^{\frac{1}{d}} \mathbb{Q}^{d}$ with intensity $\mu_{\theta}(N^{-\frac{1}{d}} x )$. Note that we have, for any $ \overline{\mathbf{Q}} \in \mathcal{P}(\mathbb{Q}^{d} \times {\rm Config})$ and $\delta > 0$,
\begin{equation}
    \mathbf{P}_{N, \beta} \left( \overline{\mathbf{P}}_{N}(X_{N}) \in B( \overline{\mathbf{Q}}, \delta) \right) = \mathbf{\Pi}_{N} \left( \overline{\mathbf{F}}_{N}(C) \in B( \overline{\mathbf{Q}}, \delta) \Big| |C| = N \right).
\end{equation}

Now let $ \overline{\mathbf{P}} \in \mathcal{P}_{s,1}(\mathbb{Q}^{d} \times {\rm Config})$.
{We claim that for any $\epsilon>0$ there exists $\delta$ such that, if $\overline{\mathbf{F}}_{N}(C) \in B( \overline{\mathbf{P}}, \delta)$ then $\left|  \frac{|C|}{N} - 1 \right| < \epsilon$. To prove this claim, note that by Lemma \ref{lem:convden}, if $\overline{\mathbf{F}}_{N}(C) \to \overline{\mathbf{P}}$ then the empirical measure associated to $C$ converges to $ \rho := {\rm int}[ \overline{\mathbf{P}}^{x}]$ weakly in the sense of probability measures. Let ${\rm emp}_{N}(C)$ denote the empirical measure associated to $C$. Then for all $\epsilon > 0$ there exists $\delta$ such that if $\overline{\mathbf{F}}_{N}(C) \in B( \overline{\mathbf{P}}, \delta)$ then $\| {\rm emp}_{N}(C) - \rho \|_{\rm BL} \leq \epsilon$, where $\| \bullet \|_{\rm BL}$ denotes the bounded-Lipschitz metric. In particular, this implies that 
\begin{equation}
    \left|  \frac{|C|}{N} - \int_{\mathbb{Q}^{d}} \rho(x) \, \mathrm d x \right| < \epsilon.  
\end{equation}
Since $\mathcal{P}_{s,1}(\mathbb{Q}^{d} \times {\rm Config})$,
\begin{equation}
    \int_{\mathbb{Q}^{d}} {\rm int}\left[ \overline{\mathbf{P}}^{x}\right] \mathrm d x =1.
\end{equation}
Hence, the claim is proved. For the remainder of the proof, we will treat $\epsilon$ as a function of $\delta$. Note that this function is non-decreasing and satisfies that $\lim_{\delta \to 0}\epsilon = 0$. 
}

We then have that
\begin{equation}
    \begin{split}
        \mathbf{\Pi}_{N} \left( \overline{\mathbf{F}}_{N}(C) \in B( \overline{\mathbf{P}}, \delta) \right) &= \sum_{j=1}^{\infty} \mathbf{\Pi}_{N} \left( \overline{\mathbf{F}}_{N}(C) \in B( \overline{\mathbf{P}}, \delta) \Big| |C| = j \right) \mathbf{\Pi}_{N} \left( |C| = j \right)\\
         &= \sum_{j=(1-\epsilon)N}^{(1+\epsilon)N} \mathbf{\Pi}_{N} \left( \overline{\mathbf{F}}_{N}(C) \in B( \overline{\mathbf{P}}, \delta) \Big| |C| = j \right) \mathbf{\Pi}_{N} \left( |C| = j \right).
    \end{split}
\end{equation}

Therefore
\begin{equation}
\begin{split}
        &\mathbf{\Pi}_{N} \left( \overline{\mathbf{F}}_{N}(C) \in B( \overline{\mathbf{\mathbf{P}}}, \delta) \right) \\
         \leq &\mathbf{\Pi}_{N} \left( \frac{|C|}{N} \in (1-\epsilon, 1+\epsilon) \right) \max_{\frac{j}{N} \in (1-\epsilon, 1+\epsilon)} \mathbf{\Pi}_{N} \left( \overline{\mathbf{F}}_{N}(C) \in B( \overline{\mathbf{P}}, \delta) \Big| |C| = j \right),
\end{split}         
\end{equation}
and similarly, 
\begin{equation}
\begin{split}
        &\mathbf{\Pi}_{N} \left( \overline{\mathbf{F}}_{N}(C) \in B( \overline{\mathbf{P}}, \delta) \right) \\
         \geq & \mathbf{\Pi}_{N} \left( \frac{|C|}{N} \in (1-\epsilon, 1+\epsilon) \right) \min_{\frac{j}{N} \in (1-\epsilon, 1+\epsilon)} \mathbf{\Pi}_{N} \left( \overline{\mathbf{F}}_{N}(C) \in B( \overline{\mathbf{P}}, \delta) \Big| |C| = j \right).
\end{split}         
\end{equation}

Note that
\begin{equation}
    \begin{split}
        \mathbf{\Pi}_{N}(|C|=N) &\leq \mathbf{\Pi}_{N} \left( \frac{|C|}{N} \in (1-\epsilon, 1+\epsilon) \right) \\
        & \leq 1.
    \end{split}
\end{equation}
We recall Stirling's approximation:
\begin{equation}\label{eq:Stirling}
    \log(N!) = N\log(N) - N + O(\log(N)).
\end{equation}
Using equation \eqref{eq:Stirling}, and the definition of a Poisson Point Process (equation \eqref{eq:Poisson}), we have that
\begin{equation}
    \lim_{N \to \infty} \frac{1}{N} \log \left( \mathbf{\Pi}_{N}(|C|=N) \right) =0,
\end{equation}
which implies that
\begin{equation}
    \lim_{\delta \to 0} \lim_{N \to \infty} \frac{1}{N} \log \left( \mathbf{\Pi}_{N} \left( \frac{|C|}{N} \in (1-\epsilon, 1+\epsilon) \right) \right)=0.
\end{equation}

On the other hand, from Definitions \ref{def:distconfig} and \ref{def:disttagged}, we can see that adding or deleting $\epsilon N$ points has a negligible effect on the tagged empirical field as $\epsilon \to 0$, and so
\begin{equation}
    \begin{split}
        &\lim_{\epsilon \to 0} \max_{\frac{j}{N} \in (1-\epsilon, 1+\epsilon)}  \mathbf{\Pi}_{N} \left( \overline{\mathbf{F}}_{N}(C) \in B( \overline{\mathbf{P}}, \delta) \Big| |C| = j \right) \\
        =& \lim_{\epsilon \to 0} \min_{\frac{j}{N} \in (1-\epsilon, 1+\epsilon)} \mathbf{\Pi}_{N} \left( \overline{\mathbf{F}}_{N}(C) \in B( \overline{\mathbf{P}}, \delta) \Big| |C| = j \right)\\
        =&\mathbf{\Pi}_{N} \left( \overline{\mathbf{F}}_{N}(C) \in B( \overline{\mathbf{P}}, \delta) \Big| |C| = N \right).
    \end{split}
\end{equation}

Therefore
\begin{equation}
    \begin{split}
         \lim_{\delta \to 0} \lim_{N \to \infty} \frac{1}{N} \log \left(  \mathbf{P}_{N, \beta} \left( \overline{\mathbf{P}}_{N}(X_{N}) \in B( \overline{\mathbf{P}}, \delta) \right) \right) &=  \lim_{\delta \to 0} \lim_{N \to \infty} \frac{1}{N} \log \left(  \mathbf{\Pi}_{N} \left( \overline{\mathbf{F}}_{N}(C) \in B( \overline{\mathbf{P}}, \delta) \right) \right)\\
         &= - \overline{\rm Ent}[ \overline{\mathbf{P}}| \overline{\mathbf{\Pi}}^{\mu_{\theta}}].
    \end{split}
\end{equation}

\end{proof}

\section{Proof of Theorem \ref{LDPbeta1N}, upper bound}

We now turn to the proof of the upper bound of Theorem~\ref{LDPbeta1N}. The proof is basically a consequence of the compactness and lower semi-continuity results in section \ref{sect:comp}, and Proposition \ref{LDPnoninteracting}. 

\begin{proof}[Proof of Theorem \ref{LDPbeta1N}, upper bound]

We will prove the upper bound of a weak LDP. It is standard (see, for example, \cite{leble2017large}) that this result, along with a lower bound {on $\lim_{\delta \to 0} \lim_{N \to \infty} \\ \left( \frac{1}{N} \log \left( \mathbf{P}_{N, \beta} \left( \overline{\mathbf{P}}_{N} \in B( \overline{\mathbf{P}}, \delta) \right) \right) \right)$ (see Section \ref{sect:lowerbound})} and exponential tightness (which is easy consequence of the fact that the total number of particles is bounded by $N$), implies the full LDP. We thus need to show that for any $ \overline{\mathbf{P}} \in \mathcal{P}_{s,1} (\mathbb{Q}^{d} \times {\rm Config})$ we have 
\begin{equation}
    \lim_{\delta \to 0} \lim_{N \to \infty} \left( \frac{1}{N} \log \left( \mathbf{P}_{N, \beta} \left( \overline{\mathbf{P}}_{N} \in B( \overline{\mathbf{P}}, \delta) \right) \right) \right) \leq -\theta \mathcal{E}(\rho- \mu_{\theta}) - \overline{\rm Ent}[ \overline{\mathbf{P}} | \overline{\mathbf{\Pi}}^{\mu_{\theta}} ],
\end{equation}
where $\rho (x)= {\rm int}[\overline{\mathbf{P}}^{x}]$.

Here comes the argument: using the thermal splitting formula (equation \eqref{eq:thermspltfrm}) and bounding the integral by its maximum value, we have that 
\begin{equation}
\begin{split}
        &\mathbf{P}_{N, \beta} \left( \overline{\mathbf{P}}_{N} \in B( \overline{\mathbf{P}}, \delta) \right) \\
        =& \int_{ \overline{\mathbf{P}}_{N} \in B( \overline{\mathbf{P}}, \delta)} \frac{1}{Z_{N, \beta}} \exp \left( -\beta \mathcal{H}_{N}(X_{N}) \right) \, \mathrm d X_{N}\\
        \leq & \frac{1}{K_{N, \beta}} \inf_{ \overline{\mathbf{P}}_{N} \in B(\overline{\mathbf{P}}, \delta)} \left\{ \exp \left( -N \theta {\rm F}_{N} ({\rm emp}_{N} , \mu_{\theta}) \right) \right\} \int_{ \overline{\mathbf{P}}_{N} \in B(\overline{\mathbf{P}}, \delta)}  \Pi_{i=1}^{N} \mu_{\theta}(x_{i}) \, \mathrm d x_{i}.
\end{split}        
\end{equation}

We recall that, by Lemma \ref{limitofpartitionfunction},
\begin{equation}
    \lim_{N \to \infty} \frac{\log \left( K_{N, \beta} \right)}{N}  = 0.
\end{equation}
Also, by Proposition \ref{LDPnoninteracting}
\begin{equation}
   \lim_{\delta \to 0} \lim_{N \to \infty} \frac{1}{N} \log \left(  \int_{ \overline{\mathbf{P}}_{N}  \in B( \overline{\mathbf{P}}, \delta)} \Pi_{i=1}^{N} \mu_{\theta}(x_{i}) \, \mathrm d x_{i} \right) \leq - \overline{{\rm Ent}}[ \overline{\mathbf{P}} | \overline{\mathbf{\Pi}}^ { \mu_{\theta}}].
\end{equation}

For the remaining term, let $X_{N}^{\delta}$ be defined as 
\begin{equation}
   X_{N}^{\delta} = {\rm argmin}_{ \overline{\mathbf{P}}_{N} \in  B ( \overline{\mathbf{P}}, \delta)} {\rm F}_{N}(X_{N}, \mu_{\theta}).
\end{equation}
We assume for simplicity that the minimum is achieved. Otherwise, we would repeat the argument up to an arbitrarily small error. We also define
\begin{equation}
    {\rm emp}_{N}^{\delta} = {\rm emp}_{N}(X_{N}^{\delta}).
\end{equation}

Then, as $N \to \infty,$ we have that 
\begin{equation}
     {\rm emp}_{N}^{\delta} \to \mu_{\delta}
\end{equation}
weakly in the sense of probability measures for some $\mu_{\delta}$. By Lemma~\ref{lem:l.s.c.} and Remark \ref{rem:thembounded}, we have that
\begin{equation}
    \mathcal{E}(\mu_{\delta} - \mu_{\theta}) \leq \liminf_{N \to \infty} {\rm F}_{N} ({\rm emp}_{N} , \mu_{\theta}).
\end{equation}

Furthermore, as $\delta \to 0,$ we have by Lemma~\ref{lem:convden} that $\mu_{\delta} \to \rho$ weakly in the sense of probability measures. Note that weak positive definiteness implies that $\mathcal{E}(\mu)$ is lower semi-continuous. Therefore
\begin{equation}
    \mathcal{E}(\rho- \mu_{\theta}) \leq \liminf_{\delta \to 0} \mathcal{E}(\mu_{\delta} - \mu_{\theta}).
\end{equation}

Putting everything together, we have that
\begin{equation}
    \lim_{\delta \to 0} \lim_{N \to \infty} \left( \frac{1}{N} \log \left( \mathbf{P}_{N, \beta} \left( \overline{\mathbf{P}}_{N} \in B( \overline{\mathbf{P}}, \delta) \right) \right) \right) \leq -\theta \mathcal{E}(\rho - \mu_{\theta}) - \overline{\rm Ent}[ \overline{\mathbf{P}} | \overline{\mathbf{\Pi}}^{\mu_{\theta}} ].
\end{equation}

\end{proof}

\section{Proof of Theorem \ref{LDPbeta1N}, lower bound}
\label{sect:lowerbound}

Having proved the upper bound of Theorem \ref{LDPbeta1N}, we turn to prove the lower bound. {The proof has the same spirit as the proofs of \cite{leble2017large, leble2017largetwo, leble2017local, hardin2018large, armstrong2021local}. Namely, we will construct a family of configurations with the right energy and enough volume. But unlike the references just mentioned, we need to deal with the mean-field energy of a general interaction and not the renormalized energy of a Riesz interaction.} The crucial ingredient in the proof is the following proposition:
\begin{proposition}
\label{Prop:fund}
Assume that $\beta = \frac{\theta}{N}$ for fixed $\theta > 0$, that $g: \mathbb{R}^{d} \to \mathbb{R}$ satisfies items $1-6$ of Theorem \ref{LDPbeta1N}, and that $V$ satisfies items $1-3$ of Theorem \ref{LDPbeta1N}. Define $ \overline{\mathbf{P}}_{N}$ by Definition~\ref{def:tagempfiel} with $\Omega = \mathbb{Q}^{d}$ and $\mu_{\theta}$ by \eqref{def:theqmeas}.  Let $ \overline{\mathbf{P}} \in \mathcal{P}_{s,1}(\mathbb{Q}^{d} \times {\rm Config})$ be such that ${\rm int}[ \overline{\mathbf{P}}^{x}] \in L^{\infty}$. Then for any ${\rm err}, \delta >0$, there exists a family of configurations $\Lambda_{N} \subset \mathbb{Q}^{d \times N}$ (depending on the previous parameters) such that:
\begin{itemize}
    \item[1.] 
    
    \begin{equation}
    \sup_{X_{N} \in \Lambda_{N}}  d_{\mathcal{P}(\mathbb{Q}^{d} \times {\rm Config})} ( \overline{\mathbf{P}}_{N}(X_{N}), \overline{\mathbf{P}}) \leq \delta.
\end{equation}
    
    \item[2.]
    
    \begin{equation}
        \lim_{N \to \infty} \frac{1}{N} \log \left( \mu_{\theta}^{\otimes N} (\Lambda_{N}) \right) \geq -\overline{\rm Ent}[ \overline{\mathbf{P}} | \overline{\mathbf{\Pi}}^{\mu_{\theta}}] - {\rm err}.
    \end{equation}
\end{itemize}

Furthermore,
  \begin{equation}
         \lim_{\delta \to 0} \lim_{N \to \infty} {\sup_{X_N \in \Lambda_{N}} } \left| {\rm F}_{N}({\rm emp}_{N} , \mu_{\theta}) - \mathcal{E}(\rho - \mu_{\theta}) \right| =0,
    \end{equation}
    where $\rho \in \mathcal{P}(\mathbb{Q}^{d})$ is such that $\rho(x) = {\rm int}[ \overline{\mathbf{P}}^{x}]$.
\end{proposition}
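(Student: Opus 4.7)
The plan is to build $\Lambda_N$ explicitly as the intersection of the field condition and the energy condition, and then reduce item 2 to Proposition~\ref{LDPnoninteracting}. Concretely, I would first approximate $\overline{\mathbf{P}}$ by a tagged process $\overline{\mathbf{P}}^{(m)}$ whose disintegration is piecewise constant on a mesh $(K_i)$ of cubes of side $T/m$ in $\mathbb{T}^d$, with $\int_{K_i}{\rm int}[\overline{\mathbf{P}}^{(m),x}]\,\mathrm dx=\int_{K_i}\rho(x)\,\mathrm dx$ and $\overline{\rm Ent}[\overline{\mathbf{P}}^{(m)}|\overline{\mathbf{\Pi}}^{\mu_\theta}]\to\overline{\rm Ent}[\overline{\mathbf{P}}|\overline{\mathbf{\Pi}}^{\mu_\theta}]$ as $m\to\infty$ (this is a standard piecewise-constant smoothing of the disintegration). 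Then I would define, for suitably small auxiliary $\delta',\eta>0$,
\begin{equation*}
\Lambda_N=\bigl\{X_N\in\mathbb{T}^{d\times N}:\ \overline{\mathbf{P}}_N(X_N)\in B(\overline{\mathbf{P}}^{(m)},\delta/2),\ \bigl|{\rm F}_N(X_N,\mu_\theta)-\mathcal E(\rho-\mu_\theta)\bigr|<\delta'\bigr\}.
\end{equation*}
Item 1 is built in (for $m$ large, $B(\overline{\mathbf{P}}^{(m)},\delta/2)\subset B(\overline{\mathbf{P}},\delta)$) and item 3 is also built in. Everything reduces to proving the volume bound in item 2.

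The volume bound is the content of Proposition~\ref{LDPnoninteracting} applied to the non-interacting Gibbs measure, which is exactly $\mu_\theta^{\otimes N}$: it gives
\begin{equation*}
\liminf_{N\to\infty}\tfrac{1}{N}\log\mu_\theta^{\otimes N}\bigl(\overline{\mathbf{P}}_N\in B(\overline{\mathbf{P}}^{(m)},\delta/2)\bigr)\geq -\overline{\rm Ent}[\overline{\mathbf{P}}^{(m)}|\overline{\mathbf{\Pi}}^{\mu_\theta}],
\end{equation*}
and by the choice of $\overline{\mathbf{P}}^{(m)}$ the right-hand side is within $\mathrm{err}/2$ of the target rate once $m$ is large enough. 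It therefore suffices to show that, conditionally on the field event, the energy event holds on a subset of conditional probability at least $\exp(-\mathrm{err}\cdot N/2)$, so that restricting to $\Lambda_N$ costs only $o(N)$ in log-volume.

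To establish the energy concentration I decompose ${\rm F}_N(X_N,\mu_\theta)=\tfrac{1}{N^2}\sum_{i\neq j}g(x_i-x_j)+\mathcal E(\mu_\theta)-2\int h^{\mu_\theta}\,\mathrm d\,{\rm emp}_N$. Because $g\in L^1$, $\mu_\theta\in L^\infty$, and $g$ is uniformly continuous away from $0$, the potential $h^{\mu_\theta}=g\ast\mu_\theta$ is bounded and continuous on $\mathbb{T}^d$; weak convergence ${\rm emp}_N\rightharpoonup\rho$ (Lemma~\ref{lem:convden}) thus gives $\int h^{\mu_\theta}\,\mathrm d\,{\rm emp}_N\to\mathcal G(\rho,\mu_\theta)$ on the field event. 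For the quadratic term I would split $g=g_\eta+r_\eta$ with $g_\eta\in C^0(\mathbb{T}^d)$ bounded and $r_\eta=g\,\mathbf 1_{B(0,\eta)}$ satisfying $\|r_\eta\|_{L^1}\to 0$. The continuous piece converges to $\int g_\eta\,\rho\otimes\rho$ by weak convergence of ${\rm emp}_N\otimes{\rm emp}_N$ (after subtracting the $O(1/N)$ diagonal). The singular piece is handled via a first-moment bound under $\mu_\theta^{\otimes N}$: $\mathbb E\bigl[\tfrac{1}{N^2}\sum_{i\neq j}r_\eta(x_i-x_j)\bigr]\leq \|\mu_\theta\|_\infty^2\|r_\eta\|_{L^1}\to 0$, upgraded to a probability statement via Markov. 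Finally, Lemma~\ref{lem:l.s.c.} combined with the weak positive definiteness of $g$ pins ${\rm F}_N$ from below by $\mathcal E(\rho-\mu_\theta)$ in the limit, ruling out downward fluctuations.

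The main obstacle is the last step: the conditioning $\{\overline{\mathbf{P}}_N\in B(\overline{\mathbf{P}}^{(m)},\delta/2)\}$ can concentrate mass on configurations with atypical short-range pair statistics, so the unconditional first-moment estimate on $r_\eta$ does not transfer directly to the conditional measure. The way around this is to leverage the volume lower bound from Proposition~\ref{LDPnoninteracting}: any bad set on which the $r_\eta$-energy exceeds $\|\mu_\theta\|_\infty^2\|r_\eta\|_{L^1}+\delta'$ by more than $\exp(\mathrm{err}\cdot N)$ in relative probability would force the unconditional $\mu_\theta^{\otimes N}$-expectation to blow up, contradicting the Markov bound once $\eta$ is chosen small enough as a function of $\mathrm{err}$. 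Taking $\eta\to 0$, then $m\to\infty$, then $\delta\to 0$ in the right order yields the required estimate and closes the argument.
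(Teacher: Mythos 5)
The paper does not define $\Lambda_N$ by intersecting the field event with the energy event; it constructs $\Lambda_N$ from scratch and then proves the energy estimate on it. Concretely, the paper draws configurations from inhomogeneous Poisson processes on a mesoscopic mesh, adjusts the point counts, and then applies a regularization procedure (the one from Lemma 5.11 of \cite{leble2017large}) that pushes points onto a lattice of spacing $\sim\tau$ whenever two points are closer than $6\tau$. The point of regularization is that it produces a \emph{deterministic} minimum separation (hence a deterministic bound on the local number of points per ball), and it is precisely this uniform local density bound that feeds into the estimate $\int_{y:0<|y-x|\le r} h^{\delta_x-\delta_x^\epsilon}\,\mathrm d\,{\rm emp}_N(y)\le C_{S,R,\tau} I(r+\epsilon)$ in Step 3 of the paper's proof. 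Your construction has no such mechanism for controlling short-range pair statistics, and the workaround you propose does not close the gap.

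Here is the concrete problem with your item 2. You want
\begin{equation*}
\mu_\theta^{\otimes N}\bigl(\Lambda_N^{\rm field}\setminus B\bigr)\geq e^{-(\overline{\rm Ent}+{\rm err})N}, \quad\text{where } B=\bigl\{\bigl|{\rm F}_N(X_N,\mu_\theta)-\mathcal E(\rho-\mu_\theta)\bigr|\geq\delta'\bigr\},
\end{equation*}
and $\Lambda_N^{\rm field}$ is the field event of $\mu_\theta^{\otimes N}$-measure roughly $e^{-\overline{\rm Ent}\cdot N}$. To subtract off $B$ at this exponential scale you would need $\mu_\theta^{\otimes N}(B)$ to be smaller than $e^{-\overline{\rm Ent}\cdot N}$ by a further exponential factor. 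But the first-moment Markov bound gives only $\mu_\theta^{\otimes N}\{{\rm F}_N^{\rm sing}>\delta'\}\leq C\|r_\eta\|_{L^1}/\delta'$, which is an $N$-independent constant; it is never small compared to $e^{-\overline{\rm Ent}\cdot N}$ for $N$ large, no matter how small you take $\eta$ (as long as $\eta$ is fixed before $N\to\infty$, which it must be for the weak-convergence argument on the continuous piece $g_\eta$ to apply). Your phrasing in terms of ``relative probability exceeding $\exp({\rm err}\,N)$'' obscures this, but unrolling it just reproduces the mismatch: a polynomial-in-$\delta'$ bound cannot beat an exponentially small volume. The hypothesis $g\in L^1$ gives no exponential moments for $g(x_i-x_j)$, so there is no hope of upgrading Markov to a Chernoff-type inequality either. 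The obstacle you identify in your last paragraph (``the conditioning can concentrate mass on configurations with atypical short-range pair statistics'') is therefore real and unresolved; it is exactly the obstacle the paper's regularization step is designed to eliminate, by making the favourable short-range statistics hold deterministically on $\Lambda_N$ rather than trying to show they hold with conditional probability close to one.

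Two smaller points. First, the paper's item 3 asserts \emph{uniform} convergence of ${\rm F}_N$ over $X_N\in\Lambda_N$; your definition makes this trivially uniform with tolerance $\delta'$, but the cost is that $\delta'$ enters the definition of $\Lambda_N$ and therefore the volume bound, so you cannot send $\delta'\to 0$ at the end without reopening item 2. Second, the continuous/bounded part of your energy decomposition (the $g_\eta$ piece and the $h^{\mu_\theta}$ linear term) is fine and parallels the paper's Substeps 3.1--3.2; the problem is localized entirely to the singular remainder $r_\eta$, where the paper leans on the regularized configuration and you lean on an inequality that is not strong enough.
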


The proof of Proposition \ref{Prop:fund} is found in section \ref{sect:ProofofProp}. We will now prove the lower bound of Theorem \ref{LDPbeta1N} using Proposition \ref{Prop:fund}. 

\begin{proof}[Proof of Theorem \ref{LDPbeta1N}, lower bound]

With the help of Proposition \ref{Prop:fund}, we will prove a weak LDP, namely, we will prove that for any $ \overline{\mathbf{P}} \in \mathcal{P}_{s,1}(\mathbb{Q}^{d} \times {\rm Config})$ {there holds,
\begin{equation}
\label{eq:LDPlowbound}
     \lim_{\delta \to 0} \lim_{N \to \infty} \frac{1}{N} \log \left( \mathbf{P}_{N, \beta} ( \overline{\mathbf{P}}_{N} \in B( \overline{\mathbf{P}}, \delta)) \right) \geq  -\theta \mathcal{E}(\rho - \mu_{\theta}) - \overline{\rm Ent}[ \overline{\mathbf{P}} | \overline{\mathbf{\Pi}}^{\mu_{\theta}} ].
\end{equation}
Note that it is enough to prove equation \eqref{eq:LDPlowbound} for $ \overline{\mathbf{P}} \in \mathcal{P}_{s,1}(\mathbb{Q}^{d} \times {\rm Config})$ such that ${\rm int}[\overline{\mathbf{P}}]^{x} \in L^{\infty}$ (as a function of $x$), since such tagged point processes are dense in $\mathcal{P}_{s,1}(\mathbb{Q}^{d} \times {\rm Config})$ (see \cite{leble2017large}).
}

To this end, let ${\rm err}, \delta >0$ and let $\Lambda_{N}$ be as in Proposition \ref{Prop:fund}, with
\begin{equation}
    \sup_{X_{N} \in \Lambda_{N}}  d_{\mathcal{P}(\mathbb{Q}^{d} \times {\rm Config})} ( \overline{\mathbf{P}}_{N}(X_{N}), \overline{\mathbf{P}}) \leq \delta
\end{equation}
and 
    \begin{equation}
     \lim_{N \to \infty} \frac{1}{N} \log \left( \mu_{\theta}^{\otimes N} (\Lambda_{N}) \right) \geq -\overline{\rm Ent}[ \overline{\mathbf{P}} | \overline{\mathbf{\Pi}}^{\mu_{\theta}}] - {\rm err}.
    \end{equation}
Then
\begin{equation}
    \begin{split}
        \mathbf{P}_{N, \beta} ( \overline{\mathbf{P}}_{N} \in B( \overline{\mathbf{P}}, \delta)) &\geq \frac{1}{K_{N, \beta}} \int_{\Lambda_{N}} \exp \left( - { N \theta}  \left[ {\rm F}_{N}({\rm emp}_{N} , \mu_{\theta}) \right] \right) \, \mathrm d \mu_{\theta}^{\otimes N}\\
        &\geq \frac{1}{K_{N, \beta}} \mu_{\theta}^{\otimes N} (\Lambda_{N})  \exp \left( -{ N \theta}  \sup_{X_{N} \in \Lambda_{N}}\left[ {\rm F}_{N}({\rm emp}_{N} , \mu_{\theta})  \right] \right).
    \end{split}
\end{equation}

Therefore for any ${\rm err}, \delta >0$,  we have that
\begin{equation}
     \lim_{N \to \infty} \frac{1}{N} \log \left( \mathbf{P}_{N, \beta} ( \overline{\mathbf{P}}_{N} \in B( \overline{\mathbf{P}}, \delta)) \right) \geq -\theta \mathcal{E}(\rho - \mu_{\theta}) - \overline{\rm Ent}[ \overline{\mathbf{P}} | \overline{\mathbf{\Pi}}^{\mu_{\theta}}] - {\rm err}.
\end{equation}

Since ${\rm err} $ is arbitrary, we can conclude that
\begin{equation}
     \lim_{N \to \infty} \frac{1}{N} \log \left( \mathbf{P}_{N, \beta} ( \overline{\mathbf{P}}_{N} \in B( \overline{\mathbf{P}}, \delta)) \right) \geq -\theta \mathcal{E}(\rho - \mu_{\theta}) - \overline{\rm Ent}[ \overline{\mathbf{P}} | \overline{\mathbf{\Pi}}^{\mu_{\theta}}],
\end{equation}
which implies the desired result. 

\end{proof}

\section{Proof of Proposition \ref{Prop:fund}}
\label{sect:ProofofProp}

This section is devoted to proving Proposition \ref{Prop:fund}.

\begin{proof}
The construction is basically the same as the one found in 
\cite{leble2017large, hardin2018large}, but the energy estimate is essentially different since we are dealing with the mean-field energy of a general interaction, not the renormalized energy of a Riesz interaction.

\textbf{Step 1}[Generating microstates]

Consider $N^{\frac{1}{d}} \mathbb{Q}^{d}$, and divide it into smaller hypercubes $\{K_{i}\}_{i \in I}$ of sidelength $R$, for some $R>0$ to be determined later. The following statement is a close adaptation of Step 1 in the proof of Proposition 4.4 of \cite{hardin2018large}:

We claim that for any $\delta > 0$, $S, R >0$, and $N >1$ there exists a family $\mathcal{A} = \mathcal{A}(\delta, S, R, N)$ of point configurations $C$ such that: 
\begin{itemize}
    \item[1.] 
    \begin{equation}
        C = \sum_{i \in I} C_{i},
    \end{equation}
    where $C_{i}$ is a point configuration on $K_{i}$.
    
    \item[2.] $|C|= N$.
    
    \item[3.] The associated tagged empirical field is close to $ \overline{\mathbf{P}}$:
    \begin{equation}
        \overline{\mathbf{P}}_{N}(C) \in B( \overline{\mathbf{P}}, \delta),
    \end{equation}
    where
    \begin{equation}
         \overline{\mathbf{P}}_{N}(C) = \frac{1}{T^{d}} \int_{\mathbb{Q}^{d}} \delta_{ \left( x, \theta_{N^{\frac{1}{d}}x}C \right)} \, \mathrm d x.
    \end{equation}
    
    \item[4.] The volume of $\mathcal{A}$ satisfies, for any $\delta>0$,
    \begin{equation}
        \liminf_{S \to \infty} \liminf_{R \to \infty} \lim_{N \to \infty} \frac{1}{N} \log \left( \mu_{\theta}^{\otimes N} (\alpha_{N^{-\frac{1}{d}}} \mathcal{A}) \right) \geq -\overline{\rm Ent}[ \overline{\mathbf{P}} | \overline{\mathbf{\Pi}}^{\mu_{\theta}} ], 
    \end{equation}
    where (as defined before), $\alpha_{\lambda}: {\rm Config} \to {\rm Config}$ is a dilation by $\lambda$.
    
    \item[5.] For each $i \in I$ we have
    \begin{equation}
        |C_{i}| \leq 2 \|\mu_{\theta}\|_{L^{\infty}} S R^{d}.
    \end{equation}
\end{itemize}

We have omitted item $4$ in the analogous statement in \cite{hardin2018large} since it is not relevant for our purpose. Other than that, the only difference with respect to \cite{hardin2018large} is that in items $4$ and $5$ (or rather, their analogue in \cite{hardin2018large}) the reference measure is the uniform measure; i.e. in our notation, items $4$ and $5$ in \cite{hardin2018large} are stated as follows:
\begin{itemize}
        \item[4.] The volume of $\mathcal{A}$ satisfies, for any $\delta>0$,
    \begin{equation}
        \liminf_{S \to \infty} \liminf_{R \to \infty} \lim_{N \to \infty} \frac{1}{N} \log \left( \mu^{\otimes N} (\alpha_{N^{-\frac{1}{d}}} \mathcal{A}) \right) \geq -\overline{\rm Ent}[ \overline{\mathbf{P}} | \overline{\mathbf{\Pi}}^{\mu} ], 
    \end{equation}
    where $\mu$ denotes the uniform probability measure on $\mathbb{Q}^{d}$. 
    
    \item[5.] For each $i \in I$ we have
    \begin{equation}
        |C_{i}| \leq \frac{2S R^{d}}{T}.
    \end{equation}
\end{itemize}

The proof of our statement is basically the same as the proof of the analogous statement in \cite{hardin2018large}, which is basically the same as the proof of Lemma 6.3 in \cite{leble2017large}, which is by construction. Hence, we will not give a full proof, but rather a sketch, and indicate the differences with respect to the proof of Lemma 6.3 in \cite{leble2017large}. The idea of the construction in the proof of of Lemma 6.3 in \cite{leble2017large} is to draw points from a Poisson Point Process of uniform intensity, and keep the ones that satisfy item $3$. Proposition 4.1 in \cite{leble2017large} (which is a particular case of Proposition \ref{LDPnoninteracting} for a Poisson Point Process of constant intensity) then implies that the volume of this family of point configurations satisfies item $4$. The authors then modify the point configurations so that they satisfy items $1,2$, and $5$, without changing the volume of this family too much. The proof in our case is basically the same, the only difference in our case is that, for each $i$, we draw the point configuration in $K_{i}$ from an in-homogeneous Poisson Point Process of intensity $\mu_{\theta}(\cdot N^{-\frac{1}{d}})|_{K_{i}}$, and Proposition \ref{LDPnoninteracting} then implies that the volume of this family satisfies item $4$. The rest of the proof is the same. 

\textbf{Step 2}[Regularization]

We then apply the regularization procedure described in \cite{leble2017large} Lemma 5.11 and \cite{hardin2018large} Proposition 4.4. As in step 1, we will not give a full proof, but rather indicate the main steps, since this is essentially the same construction as in \cite{leble2017large}. The goal of the regularization procedure is to modify the configurations so that no two points are too close together, while not changing the volume of the point configurations or the associated tagged empirical field too much. The regularization procedure is defined as follows: 
\begin{itemize}
    \item[1.] We partition $N^{\frac{1}{d}} \mathbb{Q}^{d}$ into smaller hypercubes of side length $6 \tau$, for $\tau$ to be determined later.  
    
    \item[2.] If one of these hypercubes $\mathcal{K}$ contains more than one point or if it contains a point and one of the adjacent hypercubes also contains a point, we replace the point configuration in $\mathcal{K}$  by one with the same number of points but confined in the central, smaller hypercube $\mathcal{K}' \subset \mathcal{K}$  of side length $3 \tau$ and that lives on a lattice (the spacing of the lattice depends on the initial number of points in $\mathcal{K}$).
\end{itemize}

Figure \ref{fig:regular} shows the effect of the regularization procedure on a point configuration. 
\begin{figure}
    \centering
    \includegraphics[scale=0.4]{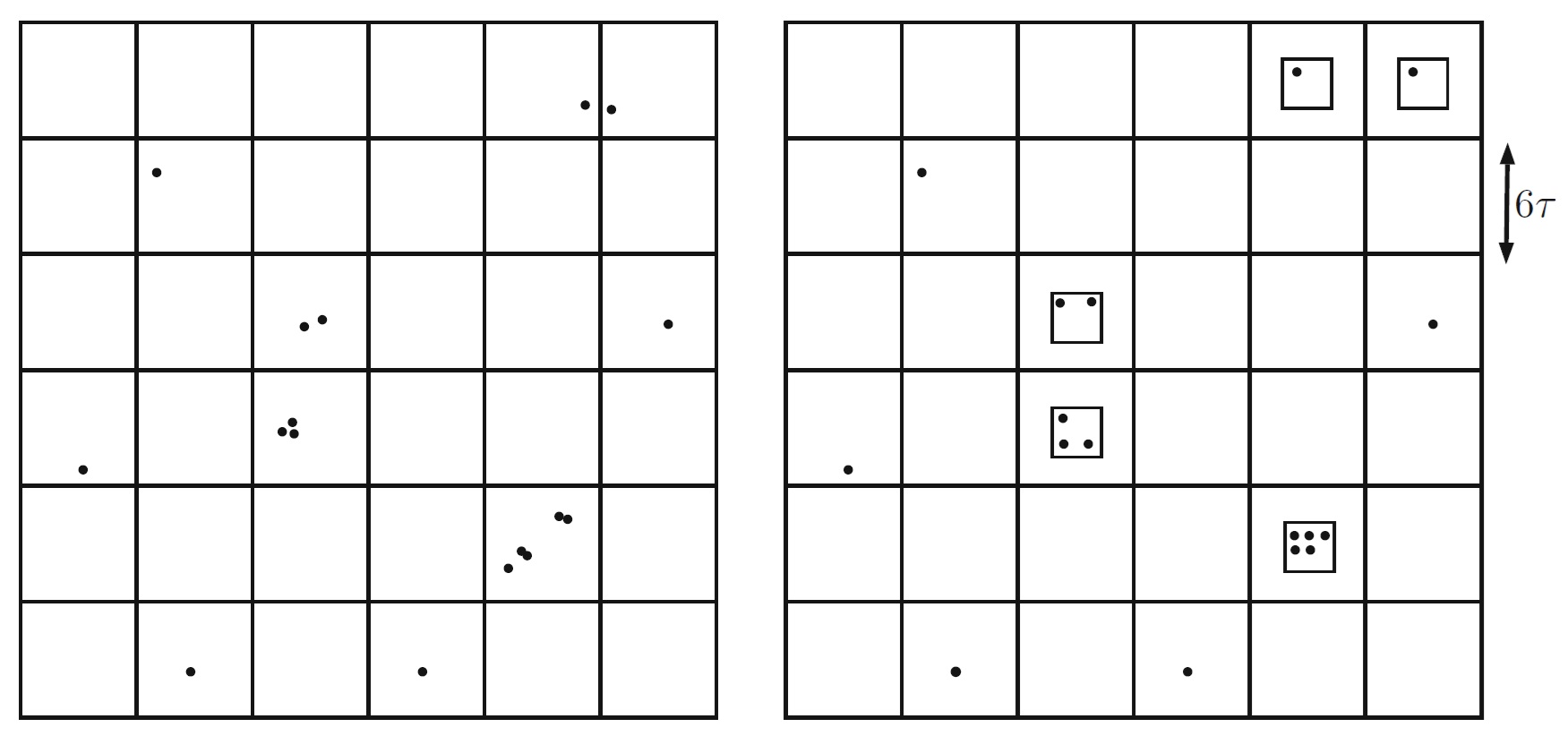}
    \caption{An unregularized configuration (left) and the regularization procedure applied to it (right). Taken from \cite{leble2017large}.}
    \label{fig:regular}
\end{figure}

The set $\Lambda_{N}$ is now defined as $\Lambda_{N} = \alpha_{N^{-\frac{1}{d}}} \mathcal{R}\mathcal{A}$, where the family of point configurations $\mathcal{R}\mathcal{A}$ consists of the regularization procedure applied to each point configuration in $\mathcal{A}$.

By Lemma 5.9 of \cite{leble2017large}, we have that for any $i \in I$,
\begin{equation}
\label{eq:tau}
    \lim_{\tau \to 0} \sup_{C \in {\rm Config}(K_{i})} d_{\rm Config} (C, \mathcal{R}C) =0.
\end{equation}

On the other hand, by Claim 6.8 of \cite{leble2017large} we have that
\begin{equation}
\label{eq:tauR}
    \limsup_{\tau \to 0, R \to \infty, N \to \infty} \sup_{C \in \mathcal{A}}  d_{\mathcal{P}(\Omega \times {\rm Config})} ( \overline{\mathbf{P}}_{N}(C), \overline{\mathbf{P}}_{N}(\mathcal{R}C)) - \frac{1}{|I|} \sum_{i \in I}  d_{\rm Config} (\theta_{x_{i}} \cdot C, \theta_{x_{i}} \cdot \mathcal{R}C ) =0,
\end{equation}
where $x_{i}$ is the center of the hypercube $K_{i}$. Putting together equations \eqref{eq:tau} and \eqref{eq:tauR}, we have that
\begin{equation}
    \limsup_{\tau \to 0, R \to \infty, S \to \infty} \sup_{C \in \Lambda_{N}}  d_{\mathcal{P}(\Omega \times {\rm Config})} ( \overline{\mathbf{P}}_{N}(C), \overline{\mathbf{P}}) \leq \delta.
\end{equation}

Using Lemmas 6.10-6.16 in \cite{leble2017large}, we have that 
\begin{equation}
    \begin{split}
         \lim_{\tau \to 0} \liminf_{S \to \infty} \liminf_{R \to \infty} \lim_{N \to \infty} \frac{1}{N} \log \left( \mu_{\theta}^{\otimes N} (\Lambda_{N}) \right) &\geq \liminf_{S \to \infty} \liminf_{R \to \infty} \lim_{N \to \infty} \frac{1}{N} \log \left( \mu_{\theta}^{\otimes N} (\alpha_{N^{-\frac{1}{d}}} \mathcal{A}) \right)
        \\
        &\geq -\overline{\rm Ent}[ \overline{\mathbf{P}} | \overline{\mathbf{\Pi}}^{\mu_{\theta}} ]. 
    \end{split}
\end{equation}

Hence, for any ${\rm err}>0$, we can find $R, S, \tau$ such that
    \begin{equation}
         \lim_{N \to \infty} \frac{1}{N} \log \left( \mu_{\theta}^{\otimes N} (\Lambda_{N}) \right) \geq -\overline{\rm Ent}[ \overline{\mathbf{P}} | \overline{\mathbf{\Pi}}^{\mu_{\theta}}] - {\rm err}.
    \end{equation}
    
Items $1$ and $2$ are now proved, we move to item $3$, which requires us to estimate the energy of such configurations. 

\textbf{Step 3}[Energy estimate]

Throughout the proof, we will use the notation $\delta_{x}^{\epsilon}$ for the uniform probability measure on $ B (x,\epsilon)$, and we will also use the notation $\delta_{\epsilon}= \delta_{0}^{\epsilon}$.

\textbf{Substep 3.1}

Let ${\rm emp}_{N}^{\epsilon} =  {\rm emp}_{N} \ast \delta_{\epsilon}$, for $\epsilon>0$ to be determined later. We will first derive an estimate for
\begin{equation}
    \left| {\rm F}_{N} \left( {\rm emp}_{N} , \mu_{\theta} \right) - \mathcal{E} \left( {\rm emp}_{N}^{\epsilon} - \mu_{\theta} \right) \right|.
\end{equation}

For this, we write
\begin{equation}
\begin{split}
    &\left| {\rm F}_{N} \left( {\rm emp}_{N} , \mu_{\theta} \right) - \mathcal{E} \left( {\rm emp}_{N}^{\epsilon} - \mu_{\theta} \right) \right| \\
    =&\left| {\rm F}_{N} \left( {\rm emp}_{N} \right) , \mathcal{E} \left( {\rm emp}_{N}^{\epsilon} \right) + 2 \mathcal{G} \left( {\rm emp}_{N} - {\rm emp}_{N}^{\epsilon}, \mu_{\theta} \right) \right|  \\
    \leq& 2 \left|  \mathcal{G}\left( {\rm emp}_{N} - {\rm emp}_{N}^{\epsilon}, \mu_{\theta} \right) \right| + \left|  \mathcal{G}^{\neq} \left( {\rm emp}_{N} - {\rm emp}_{N}^{\epsilon},  {\rm emp}_{N} \right) \right|+ \left|  \mathcal{G} \left( {\rm emp}_{N} - {\rm emp}_{N}^{\epsilon},  {\rm emp}_{N}^{\epsilon} \right) \right|\\
     &\ \ \ \ \  + 2 \left|  \mathcal{G}\left( {\rm emp}_{N} - {\rm emp}_{N}^{\epsilon}, \mu_{\theta} \right) \right| + \left|  \sum_{i \neq j} \int h^{\delta_{x_{i}} - \delta_{x_{i}}^{\epsilon}} \delta_{x_{j}} \right|+ \left|  \sum_{i \neq j} \int h^{\delta_{x_{i}} - \delta_{x_{i}}^{\epsilon}} \delta_{x_{j}}^{\epsilon} \right| + \frac{1}{N} \mathcal{E}(\delta^{\epsilon})
\end{split}
\end{equation}

First we deal with the term $\left|  \mathcal{G} \left( {\rm emp}_{N} - {\rm emp}_{N}^{\epsilon}, \mu_{\theta} \right) \right|$. Note that in order to show that \\ $\left|  \mathcal{G} \left( {\rm emp}_{N} - {\rm emp}_{N}^{\epsilon}, \mu_{\theta} \right) \right| \to 0$, it suffices to show that $\left|  \mathcal{G} \left( \delta_{x} - \delta_{x}^{\epsilon}, \mu_{\theta} \right) \right| \to 0$ uniformly in $x$. To this end, take any $r>0$ and write 
\begin{equation}
    \mathcal{G} \left( \delta_{x} - \delta_{x}^{\epsilon}, \mu_{\theta} \right) = \int_{|y-x| \leq r} h^{\delta_{x}- \delta_{x}^{\epsilon}} \mu_{\theta}(y) \, \mathrm d y + \int_{|y-x| \geq r} h^{\delta_{x}- \delta_{x}^{\epsilon}} \mu_{\theta}(y) \, \mathrm d y. 
\end{equation}

Note that
\begin{equation}\label{eq:firstterm}
   \left| \int_{|y-x| \leq r} h^{\delta_{x}- \delta_{x}^{\epsilon}} \mu_{\theta}(y) \, \mathrm d y \right| \leq 2 \| \mu_{\theta} \|_{L^{\infty}} \int_{|y-x| \leq r + \epsilon} g(y) \, \mathrm d y.
\end{equation}

For the remainder of the proof, we will use the notation {
\begin{equation}
    I(\epsilon) = \int_{\square_{\epsilon}} D(x) \, \mathrm d x,
\end{equation}
where $D$ is an in hypothesis $6$ of $g$ in Theorem \ref{LDPbeta1N}, }and also the function
\begin{equation}
    \Psi(\alpha,\beta) = \sup_{|x-y|<\beta, |x|>\alpha, |y|>\alpha} \left| g(x) - g(y) \right|.
\end{equation}

Note that $\Psi(\alpha,\beta)$ is decreasing in $\alpha$, increasing in $\beta$, and for every $\alpha$, $\Psi(\alpha,\beta) \to 0$ as $\beta \to 0$ by uniform continuity (Property $3$ of $g$). 
We then have that 
\begin{equation}\label{eq:secondterm}
   \left|  \int_{|y-x| \geq r} h^{\delta_{x}- \delta_{x}^{\epsilon}} \mu_{\theta}(y) \, \mathrm d y \right| \leq \| \mu_{\theta} \|_{L^{1}} \Psi(r-\epsilon, \epsilon),
\end{equation}
since for any $y$ such that $|x-y|>r$, we have that
\begin{equation}
    \left| h^{\delta_{x}- \delta_{x}^{\epsilon}} (y) \right| \leq \Psi(r-\epsilon, \epsilon).
\end{equation}

Putting together equations \eqref{eq:firstterm} and \eqref{eq:secondterm}, we have that
\begin{equation}
    \left| \mathcal{G} \left( \delta_{x} - \delta_{x}^{\epsilon}, \mu_{\theta} \right) \right| \leq   \| \mu_{\theta} \|_{L^{1}} \Psi(r-\epsilon, \epsilon) + \| \mu_{\theta} \|_{L^{\infty}} I(r+\epsilon).
\end{equation}

We will now deal with the term $\mathcal{G}^{\neq} \left( {\rm emp}_{N} - {\rm emp}_{N}^{\epsilon},  {\rm emp}_{N} \right)$. The procedure is similar. Note that it suffices to show that $\mathcal{G}^{\neq} \left( {\rm emp}_{N} - {\rm emp}_{N}^{\epsilon},x \right) \to 0$ uniformly for any $x$. For arbitrary $r>0$, we write
\begin{equation}
    \mathcal{G}^{\neq} \left( \delta_{x} - \delta_{x}^{\epsilon}, {\rm emp}_{N} \right) = \int_{y:0<|y-x| \leq r} h^{\delta_{x}- \delta_{x}^{\epsilon}} \, \mathrm d {\rm emp}_{N}(y)  + \int_{y:|y-x| \geq r} h^{\delta_{x}- \delta_{x}^{\epsilon}} \, \mathrm d {\rm emp}_{N}(y),
\end{equation}
and we have 
\begin{equation}
    \int_{y:|y-x| \geq r} h^{\delta_{x}- \delta_{x}^{\epsilon}} \, \mathrm d {\rm emp}_{N}(y) \leq \Psi(r-\epsilon, \epsilon).
\end{equation}

On the other hand, it is not hard to prove that our construction (as a consequence of the regularization procedure) satisfies that {for any $i \neq j$ the particle positions satisfy that $|x_{i} - x_{j}| \geq c_{S, R, \tau} N^{- \frac{1}{d}}$. Basically, this is a consequence of the fact that the number of particles in each hypercube of side $R$ is bounded above, and so the minimum spacing of the lattice to which the particles are assigned is bounded below. As a consequence, for any $t > 0$ the number of particles at distance greater than $t$ and smaller than $t+N^{ - \frac{1}{d}}$ is bounded above by $C_{S, R, \tau} N^{\frac{d-1}{d}} t^{d-1}$.
Let $\{r_{i}\}$ be a partition of the interval $(0,r)$ of size $N^{- \frac{1}{d}}$. Then
\begin{equation}
    \begin{split}
        \left| \int_{y:0<|y-x| \leq r} h^{\delta_{x}} \, \mathrm d {\rm emp}_{N}(y) \right| &=  \left| \sum_{i} \int_{y:r_{i}<|y-x| \leq r_{i+1}} h^{\delta_{x}} \, \mathrm d {\rm emp}_{N}(y) \right| \\
        &\leq C_{S, R, \tau} \frac{1}{N^{\frac{1}{d}}}\sum_{i} D(r_{i}) r_{i+1}^{d-1}\\
        &\leq C_{S, R, \tau} \int_{0}^{r} D(s) s^{d-1} \, \mathrm d s \\
        & \leq C_{S, R, \tau} I(r).
    \end{split}
\end{equation}

Proceeding analogously, we may prove that 
\begin{equation}
   \left| \int_{y:0<|y-x| \leq r} h^{ \delta_{x}^{\epsilon}} \, \mathrm d {\rm emp}_{N}(y) \right|  \leq C_{S, R, \tau} I(r+\epsilon),
\end{equation}
and so 
\begin{equation}
    \left| \int_{y:0<|y-x| \leq r} h^{\delta_{x}- \delta_{x}^{\epsilon}} \, \mathrm d {\rm emp}_{N}(y) \right| \leq C_{S, R, \tau} I(r+\epsilon).
\end{equation}
} 

Putting everything together, we have that
\begin{equation}
    \mathcal{G}^{\neq} \left( \delta_{x} - \delta_{x}^{\epsilon}, {\rm emp}_{N} \right) \leq  C_{S, R, \tau} I(r+\epsilon) + \Psi(r-\epsilon, \epsilon).
\end{equation}

Similarly, we have that 
\begin{equation}
   \left|  \sum_{i \neq j} \int h^{\delta_{x_{i}} - \delta_{x_{i}}^{\epsilon}} \delta_{x_{j}}^{\epsilon} \right| \leq C_{S, R, \tau} I(r+\epsilon) + \Psi(r-\epsilon, \epsilon),
\end{equation}
where $C_{S, R, \tau}$ depends on $S, R, \tau$.

Adding the errors we have that for any $r>0$, 
\begin{equation}
   \left| {\rm F}_{N} \left( {\rm emp}_{N} , \mu_{\theta} \right) - \mathcal{E} \left( {\rm emp}_{N}^{\epsilon} - \mu_{\theta} \right) \right| \leq C_{S, R, \tau} I(r+\epsilon)+C\Psi(r-\epsilon, \epsilon)+ \frac{1}{N} \mathcal{E}(\delta^{\epsilon}),
\end{equation}
where $C$ is a constant that depends on $\mu_{\theta}$ and $C_{S, R, \tau}$ is a constant that depends, additionally. on $S, R,$ and  $\tau$.

\textbf{Substep 3.2}

We now get an estimate for 
\begin{equation}
    \mathcal{E} \left( {\rm emp}_{N}^{\epsilon} - \rho^{\epsilon} \right), 
\end{equation}
where 
\begin{equation}
    \rho^{\epsilon} = \rho \ast \delta^{\epsilon}.
\end{equation}

For this, let $\eta>0$ to be determined later, and let $K_{i}$ be hypercubes of length $\eta$ which cover $\mathbb{Q}^{d}$, and which are pairwise disjoint except for a set of measure $0$. Let $x \in \mathbb{Q}^{d}$, with $x \in K_{j}$. Note that we can write 
\begin{equation}
\label{eq:splith}
    h^{{\rm emp}_{N}^{\epsilon} - \rho^{\epsilon}} = \int_{K_{j}} g^{\epsilon}(x-y) \, \mathrm d ( {\rm emp}_{N} - \rho) (y) + \sum_{i \neq j} \int_{K_{i}} g^{\epsilon}(x-y) \, \mathrm d ( {\rm emp}_{N} - \rho) (y),
\end{equation}
where
\begin{equation}
    g^{\epsilon} = g \ast \delta^{\epsilon}.
\end{equation}

We will now get an estimate for each of the terms in the RHS of equation \eqref{eq:splith}. Assume W.L.O.G. that
\begin{equation}
     {\rm emp}_{N}(K_{i}) \leq \rho(K_{i}).
\end{equation}
Then we have that 
\begin{equation}
\label{eq:splittingerr}
    \begin{split}
       &\sum_{i \neq j} \left| \int_{K_{i}} g^{\epsilon}(x-y) \, \mathrm d ( {\rm emp}_{N} - \rho) (y) \right|  \\
       \leq &\sum_{i \neq j} \left|  {\rm emp}_{N}(K_{i}) \min_{y \in K_{i}}g^{\epsilon}(x-y) -  \rho(K_{i}) \max_{y \in K_{i}}g^{\epsilon}(x-y)  \right|   \\ 
       \leq &\sum_{i \neq j} \left| \left[ {\rm emp}_{N}(K_{i}) -  \rho(K_{i}) \right] \max_{y \in K_{i}}g^{\epsilon}(x-y)  \right| + \sum_{i \neq j}\left|   \rho(K_{i}) \left[ \max_{y \in K_{i}}g^{\epsilon}(x-y) - \min_{y \in K_{i}}g^{\epsilon}(x-y) \right] \right|.
    \end{split}
\end{equation}

We now introduce the function ${\rm Disc}(\eta, \delta, N)$, defined as 
\begin{equation}
    {\rm Disc}(\eta, \delta, N) = \max_{X_{N} \in \Lambda_{N}} \max_{i}  \left|  {\rm emp}_{N}(K_{i}) -  \rho(K_{i}) \right|.
\end{equation}
Note that for any \emph{fixed} $ \eta >0$, we have that
\begin{equation}
\label{eq:weakconvergence}
   \lim_{\delta \to 0} \lim_{N \to \infty} {\rm Disc}(\eta, \delta, N)  =0,
\end{equation}
since ${\rm emp}_{N}$ converges to $\rho$ weakly in the sense of probability measures. This would not be true if $\eta$ was tending to $0$, since in that case the number of $i$'s would grow with $N$. On the other hand,
\begin{equation}
\begin{split}
    \sum_{i \neq j} \left| \left[ {\rm emp}_{N}(K_{i}) -  \rho(K_{i}) \right] \max_{y \in K_{i}}g^{\epsilon}(x-y)  \right| &\leq  {\rm Disc}(\eta, \delta, N) \sum_{i \neq j} \left| \max_{y \in K_{i}}g^{\epsilon}(x-y)  \right| \\
    &\leq \frac{c_{g, \epsilon}}{\eta^{d}} {\rm Disc}(\eta, \delta, N) ,
\end{split}    
\end{equation}
where $c_{g, \epsilon}$ is defined as 
\begin{equation}
    c_{g, \epsilon} =  \max_{y \in \mathbb{Q}^{d}} \left| g^{\epsilon}(y) \right| < \infty.
\end{equation}

We now turn to the second term in the last line of \eqref{eq:splittingerr}. For this, let $r_{1}>0$ be big enough compared to $\eta$. Then we have that 
\begin{equation}
    \begin{split}
        &\sum_{i \neq j}\left|   \rho(K_{i}) \left[ \max_{y \in K_{i}}g^{\epsilon}(x-y) - \min_{y \in K_{i}}g^{\epsilon}(x-y) \right] \right|  \\
        \leq&\sum_{i \neq j}\left|   \|\rho\|_{L^{\infty}} \eta^{d} \left[ \max_{y \in K_{i}}g^{\epsilon}(x-y) - \min_{y \in K_{i}}g^{\epsilon}(x-y) \right] \right|\\
       =& \sum_{i \neq j, K_{i} \subset B(x,r_{1})}\left|   \|\rho\|_{L^{\infty}} \eta^{d} \left[ \max_{y \in K_{i}}g^{\epsilon}(x-y) - \min_{y \in K_{i}}g^{\epsilon}(x-y) \right] \right|  \\
       &\ \ \ \ \ \ \ \ +\sum_{i \neq j, K_{i} \cap (\mathbb{Q}^{d} \setminus B(x,r_{1})) \neq \phi}\left|   \|\rho\|_{L^{\infty}} \eta^{d} \left[ \max_{y \in K_{i}}g^{\epsilon}(x-y) - \min_{y \in K_{i}}g^{\epsilon}(x-y) \right] \right|.
    \end{split}
\end{equation}

Proceeding as in step 3.1, we have that 
\begin{equation}
\begin{split}
    \sum_{i \neq j, K_{i} \subset B(x,r_{1})}\left|   \|\rho\|_{L^{\infty}} \eta^{d} \left[ \max_{y \in K_{i}}g^{\epsilon}(x-y) - \min_{y \in K_{i}}g^{\epsilon}(x-y) \right] \right| \leq 2  \|\rho\|_{L^{\infty}} I(r_{1}+\epsilon), \\
   \sum_{i \neq j, K_{i} \cap (\mathbb{Q}^{d} \setminus B(x,r_{1})) \neq \phi}\left|   \|\rho\|_{L^{\infty}} \eta^{d} \left[ \max_{y \in K_{i}}g^{\epsilon}(x-y) - \min_{y \in K_{i}}g^{\epsilon}(x-y) \right] \right| \leq  \|\rho\|_{L^{\infty}} \Psi(r_{1}-\eta, \eta).
\end{split}    
\end{equation}

We now estimate the last term in equation \eqref{eq:splith}, namely
\begin{equation}
    \int_{K_{j}} g(x-y) \, \mathrm d ( {\rm emp}_{N}^{\epsilon} - \rho) (y). 
\end{equation}
Proceeding again as in step 3.1, we have that 
\begin{equation}
   \left|  \int_{K_{j}} g(x-y) \, \mathrm d ( {\rm emp}_{N}^{\epsilon} - \rho) (y) \right| \leq (1+C_{S, R, \tau}) I(\eta). 
\end{equation}

Putting everything together,  we have that
\begin{equation}
    h^{ {\rm emp}_{N}^{\epsilon} - \rho^{\epsilon} } \leq \frac{c_{g, \epsilon}}{\eta^{d}} {\rm Disc}(\eta, \delta, N)+ \|\rho\|_{L^{\infty}} \Psi(r_{1}-\eta, \eta) + \|\rho\|_{L^{\infty}} I(r_{1}+\epsilon)+(1+C_{S, R, \tau}) I(\eta).
\end{equation}

\textbf{Substep 3.3}[Conclusion of the energy estimate]

By adding and subtracting terms, we have that for any $\epsilon>0$,
\begin{equation}
    \begin{split}
        & \left| {\rm F}_{N}({\rm emp}_{N} , \mu_{\theta}) - \mathcal{E}(\rho - \mu_{\theta}) \right|  \\
        \leq & \left| {\rm F}_{N}({\rm emp}_{N} , \mu_{\theta}) - \mathcal{E}({\rm emp}_{N}^{\epsilon} - \mu_{\theta}) \right| + \left| \mathcal{E}({\rm emp}_{N}^{\epsilon} - \mu_{\theta}) - \mathcal{E}(\rho - \mu_{\theta}) \right|  \\
        \leq & \left| {\rm F}_{N}({\rm emp}_{N} , \mu_{\theta}) - \mathcal{E}({\rm emp}_{N}^{\epsilon} - \mu_{\theta}) \right| + \left| \mathcal{E}({\rm emp}_{N}^{\epsilon} - \mu_{\theta}) - \mathcal{E}(\rho^{\epsilon} - \mu_{\theta}) \right| \\
        &\ \ \ \ \ \ \ + \left| \mathcal{E}(\rho - \mu_{\theta}) - \mathcal{E}(\rho^{\epsilon} - \mu_{\theta})  \right|.
    \end{split}
\end{equation}

Using polar factorization for the quadratic form $\mu \to \mathcal{E}(\mu)$, we have
\begin{equation}
    \begin{split}
       \left| \mathcal{E}({\rm emp}_{N}^{\epsilon} - \mu_{\theta}) - \mathcal{E}(\rho^{\epsilon} - \mu_{\theta}) \right| &= \left| \mathcal{G} \left( {\rm emp}_{N}^{\epsilon} + \rho^{\epsilon} - 2 \mu_{\theta}, {\rm emp}_{N}^{\epsilon} - \rho^{\epsilon}  \right) \right| \\
        &\leq 4 \|  h^{ {\rm emp}_{N}^{\epsilon} - \rho^{\epsilon} } \|_{L^{\infty}}.
    \end{split}
\end{equation}

Putting together all the previous estimates, we have that, for any $\epsilon, \eta, r, r_{1} >0$,
\begin{equation}
\label{eq:nearlythere}
\begin{split}
    &\lim_{\delta \to 0}  \lim_{N \to \infty}{\sup_{X_N \in \Lambda_{N}} } \left| {\rm F}_{N}({\rm emp}_{N} , \mu_{\theta}) - \mathcal{E}(\rho - \mu_{\theta}) \right|  \\
    \leq &C_{S, R, \tau} I(r+\epsilon)+C\Psi(r-\epsilon, \epsilon) +  \|\rho\|_{L^{\infty}} \Psi(r_{1}-\eta, \eta) + \|\rho\|_{L^{\infty}} I(r_{1}+\epsilon)+\\
    & \ \ \ \ \ \ (1+C_{S, R, \tau}) I(\eta) + \left| \mathcal{E}(\rho - \mu_{\theta}) - \mathcal{E}(\rho^{\epsilon} - \mu_{\theta})  \right| .
\end{split}    
\end{equation}

Taking the limit $\epsilon \to 0$, then $\eta \to 0$ \footnote{Note that there is no issue with taking $\eta \to 0$, since the term ${\rm Disc}(\eta, \delta, N)$ present in equation \eqref{eq:weakconvergence} is no longer present in equation \eqref{eq:nearlythere}}, then $r, r_{1} \to 0$, we have that
\begin{equation}
    \lim_{\delta \to 0}  \lim_{N \to \infty} {\sup_{X_N \in \Lambda_{N}} } \left| {\rm F}_{N}({\rm emp}_{N} , \mu_{\theta}) - \mathcal{E}(\rho - \mu_{\theta}) \right| =0.
\end{equation}

Since the estimates are valid for any $X_{N} \in \Lambda_{N}$, we have that convergence is uniform. 

This concludes the proof of Proposition \ref{Prop:fund}. 

\end{proof}

\section{Appendix: Riesz and log gases at mid temperature}
\label{Rieszplasma}

So far, we have discussed only general interactions at high temperature. It is natural to ask if it is possible to obtain a result valid in a more general temperature regime if we assume additional hypotheses on the interactions. If we specialize to Riesz interactions, then we can indeed obtain such a result. The results obtained in this section are not used in the rest of the paper, but we include them out of independent interest. For this section, we will work on $\mathbb{R}^{d}$, and not on $\mathbb{Q}^{d}$, i.e. We will no longer assume that $V$ is infinite outside of $\mathbb{Q}^{d}$. We start by recalling some well-known facts about Riesz gasses. 

\begin{lemma}[Equilibrium measure]
\label{lem:eqmeas}
Let $g: \mathbb{R}^{d} \to \mathbb{R}$ be given by equations \eqref{Coulombcase} and \eqref{Rieszcase}.

Assume that $V$ satisfies:
\begin{itemize}
    \item[1.] $V$ is lower semi-continuous. 
    
    \item[2.] 
    \begin{equation}
        \lim_{x \to \infty} \frac{V(x)}{2} + g(x) = \infty.
    \end{equation}
    
    \item[3.] $V$ is finite on a set of positive measure. 
\end{itemize}

Then $\mathcal{E}_{V}$ (given by equation \eqref{meanfieldlimit}) has a unique minimizer in the set of probability measures, which we denote $\mu_{V}$. Furthermore, $\mu_{V}$ has compact support, which we denote $\Sigma$, and satisfies
\begin{equation}
    \begin{cases}
    h^{\mu_{V}} + \frac{V}{2} + c \geq 0\\
    h^{\mu_{V}}(x) + \frac{V}{2}(x) + c = 0 \ {\rm for} \ x \in \Sigma
    \end{cases}
\end{equation}
for some $c \in \mathbb{R}$. 
\end{lemma}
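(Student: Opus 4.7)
The plan is to follow the classical approach to equilibrium measures in Riesz/Coulomb potential theory, adapted to the confining field $V$ on $\mathbb{R}^d$. The overall structure will mirror Step 1--Step 3 of the thermal analogue proved earlier, but with the entropy term replaced by the coercivity coming from $V$.

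I would first prove existence by the direct method. Symmetrizing and using $g(x-y) + V(x)/2 + V(y)/2$ as the integrand of $\mathcal{E}_V$, the growth hypothesis (item 2) guarantees that this integrand is bounded below on $\mathbb{R}^d \times \mathbb{R}^d$ and tends to $+\infty$ when $|x|$ or $|y| \to \infty$. This yields simultaneously a uniform lower bound on $\mathcal{E}_V$ and tightness of any minimizing sequence: if mass escaped to infinity, the energy would diverge. After extraction, $\mu_n \rightharpoonup \mu_V$, and lower semicontinuity of $\mathcal{E}_V$ (from l.s.c.\ of $V$, truncation of $g$ near the diagonal, and Fatou) produces a minimizer. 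Item 3 is used here only to guarantee that the infimum is finite (take a bounded probability density supported where $V < \infty$).

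For uniqueness, I would use strict positive definiteness of the Riesz kernel on signed measures of zero total mass: $\mathcal{E}(\nu) > 0$ for any nonzero $\nu \in TV(\mathbb{R}^d)$ with $\int d\nu = 0$. This is classical, most transparently via the Fourier representation $\widehat{g}(\xi) = c_{d,s}\,|\xi|^{-2s}$ (with the usual modification in the logarithmic case). Given two minimizers $\mu_1,\mu_2$, the identity
\begin{equation}
\mathcal{E}_V\!\left(\tfrac{\mu_1+\mu_2}{2}\right) = \tfrac{1}{2}\mathcal{E}_V(\mu_1)+\tfrac{1}{2}\mathcal{E}_V(\mu_2) - \tfrac{1}{4}\mathcal{E}(\mu_1-\mu_2)
\end{equation}
forces $\mathcal{E}(\mu_1-\mu_2)=0$, hence $\mu_1=\mu_2$. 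The Euler--Lagrange inequalities then come from testing against admissible perturbations: for any compactly supported signed measure $\sigma$ with $\int d\sigma = 0$ and $\mu_V + t\sigma \geq 0$ for small $t \geq 0$, minimality yields $\int (2h^{\mu_V}+V)\,d\sigma \geq 0$. Symmetric perturbations supported inside $\Sigma$ show that $h^{\mu_V} + V/2$ equals some constant $-c$ q.e.\ on $\Sigma$, while one-sided perturbations putting mass off $\Sigma$ give $h^{\mu_V}+V/2+c \geq 0$ everywhere.

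Compactness of $\Sigma$ is then read off from the Euler--Lagrange characterization: in the Riesz case $s>0$ we have $h^{\mu_V}(x)\to 0$ as $|x|\to\infty$ while $V(x)\to\infty$, so the equality can hold only on a bounded set. The main technical obstacle I expect is precisely the logarithmic case $d\in\{1,2\}$, where $h^{\mu_V}$ itself tends to $-\infty$ and one cannot treat $V$ and $g$ separately; one must exploit the joint growth hypothesis $V/2+g\to\infty$ to conclude that $h^{\mu_V}+V/2\to +\infty$, and the same joint growth is what underlies tightness of minimizing sequences in Step~1. Once this is handled uniformly in both cases, all four conclusions of the lemma follow.
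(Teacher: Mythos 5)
The paper does not give its own proof of this lemma; it simply cites \cite{serfaty2015coulomb}, and your sketch correctly reproduces the standard argument given there (and in Frostman/Saff--Totik before it): direct method with tightness from the coercivity of the symmetrized integrand $g(x-y)+V(x)/2+V(y)/2$, strict positive definiteness of the Riesz/log kernel via Fourier representation plus the parallelogram identity for uniqueness, Euler--Lagrange variational inequalities, and compactness of $\Sigma$ from the growth condition with the correct observation that the logarithmic case requires the \emph{joint} growth of $V/2+g$ rather than that of $V$ alone. Your proposal is therefore correct and follows essentially the same route as the reference the paper invokes.
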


\begin{proof}
See, for example, \cite{serfaty2015coulomb}.
\end{proof}

\begin{lemma}[Splitting formula]
Assume that $g$ is given by equations \eqref{Coulombcase} and \eqref{Rieszcase}, and that $V$ satisfies items $1-3$ of Lemma \ref{lem:eqmeas}. Then for any point configuration $X_{N} \in \mathbb{R}^{d \times N}$, the Hamiltonian can be rewritten (split) as 
\begin{equation}
    \mathcal{H}_{N}(X_{N}) = N^{2} \mathcal{E}_{V}(\mu_{V}) + 2N \sum_{i=1}^{N} \zeta(x_{i}) + N^{2}    {\rm F}_{N}(X_{N}, \mu_{V}) ,
\end{equation}
where
\begin{equation}
    \zeta(x) = h^{\mu_{V}} + \frac{V}{2} + c.
\end{equation}
\end{lemma}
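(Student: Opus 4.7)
The plan is a purely algebraic manipulation, closely mirroring the proof of the thermal splitting formula \eqref{eq:thermspltfrm} given earlier. The starting point is to write $\mathcal{H}_{N}(X_{N}) = N^{2}\left( \frac{1}{N^{2}} \sum_{i\neq j} g(x_{i}-x_{j}) + \int V \, \mathrm d {\rm emp}_{N} \right)$ and expand the pair interaction around $\mu_{V}$ via the identity
\begin{equation*}
    \frac{1}{N^{2}} \sum_{i\neq j} g(x_{i}-x_{j}) \;=\; {\rm F}_{N}(X_{N}, \mu_{V}) + \mathcal{E}(\mu_{V}) + 2\,\mathcal{G}({\rm emp}_{N} - \mu_{V},\, \mu_{V}),
\end{equation*}
which is merely a rearrangement of the definition of ${\rm F}_{N}$.

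After this expansion, using that $\int h^{\mu_{V}} \, \mathrm d \mu_{V} = \mathcal{E}(\mu_{V})$ to consolidate the constant terms, the Hamiltonian takes the form
\begin{equation*}
    \mathcal{H}_{N}(X_{N}) \;=\; N^{2} {\rm F}_{N}(X_{N}, \mu_{V}) - N^{2} \mathcal{E}(\mu_{V}) + N^{2} \int \bigl( 2 h^{\mu_{V}} + V \bigr) \, \mathrm d {\rm emp}_{N}.
\end{equation*}
Writing $2 h^{\mu_{V}} + V = 2\zeta - 2c$ (which is simply the definition of $\zeta$ from Lemma \ref{lem:eqmeas}) converts the last integral into $2N \sum_{i} \zeta(x_{i}) - 2N^{2} c$. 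Finally I would test the Euler-Lagrange equation against $\mu_{V}$: since $\mu_{V}$ is supported on $\Sigma$ and $\zeta \equiv 0$ on $\Sigma$, we get $\int \zeta \, \mathrm d \mu_{V} = 0$, i.e.\ $\mathcal{E}(\mu_{V}) + \tfrac{1}{2} \int V \, \mathrm d \mu_{V} + c = 0$. This yields $-\mathcal{E}(\mu_{V}) - 2c = \mathcal{E}(\mu_{V}) + \int V \, \mathrm d \mu_{V} = \mathcal{E}_{V}(\mu_{V})$, so the constant contributions combine to $N^{2} \mathcal{E}_{V}(\mu_{V})$ and the claimed identity follows.

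There is no real obstacle here; the argument is a bookkeeping exercise with the definitions of $\mathcal{E}$, $\mathcal{G}$, and ${\rm F}_{N}$. The only point that deserves attention is the distinction between the two uses of the Euler-Lagrange relation: on $\Sigma$ one has $2 h^{\mu_{V}} + V = -2c$, whereas off $\Sigma$ the object $\zeta = h^{\mu_{V}} + V/2 + c$ is merely nonnegative. Both roles are consistent with the definition of $\zeta$ used in the statement, so no extra hypothesis is needed.
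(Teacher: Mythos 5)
Your proof is correct. The paper does not supply an argument for this lemma (it only cites \cite{serfaty2015coulomb}), but your derivation is the standard one found there and is the exact analogue of the proof of the thermal splitting formula \eqref{eq:thermspltfrm} given earlier in the paper: expand the pair sum via the definition of ${\rm F}_N$, use $\mathcal{G}({\rm emp}_N,\mu_V)=\int h^{\mu_V}\,\mathrm d\,{\rm emp}_N$ to collect the potential, substitute $2h^{\mu_V}+V=2\zeta-2c$, and pin down the constant by integrating $\zeta$ against $\mu_V$ (which vanishes since $\zeta\equiv 0$ on $\Sigma$), giving $-\mathcal{E}(\mu_V)-2c=\mathcal{E}_V(\mu_V)$. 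The bookkeeping checks out.
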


\begin{proof}
See, for example, \cite{serfaty2015coulomb}.
\end{proof}

\begin{definition}[Next-order partition function]
We define the mean-field next-order partition function $\widetilde{K}_{N, \beta}$ as 
\begin{equation}
    \widetilde{K}_{N, \beta} = \frac{Z_{N, \beta}}{\exp \left( -N^{2} \beta \mathcal{E}_{V}(\mu_{V}) \right)}.
\end{equation}
\end{definition}

Now that we have recalled some well-known results, we state the main new result of this section. 

\begin{proposition}
\label{Prop:rieszmedtemp}
Let $g$ be given by equations \eqref{Coulombcase} and \eqref{Rieszcase}. Let $ \overline{\mathbf{P}}_{N}$ be as in Definition \ref{def:tagempfiel}, with $\Omega = \Sigma$. Assume that $\beta = N^{-\gamma}$, with $\gamma \in \left( \frac{d-2s}{d}, 1 \right)$ (in the case of equation \eqref{Coulombcase} we take $s=1$ for $d \geq 2$ and $s = \frac{1}{2}$ for $d=1$). Assume hypotheses (H1)-(H5) of \cite{leble2017large}. Then the push-forward of $\mathbf{P}_{N, \beta}$ by $ \overline{\mathbf{P}}_{N}$ satisfies an LDP in $\mathcal{P}(\Sigma \times {\rm Config})$ at speed $N$ with rate function given by   
\begin{equation}
    \mathcal{F}( \overline{\mathbf{P}}) = \begin{cases}
         \mathcal{J}( \overline{\mathbf{P}})  - \left(  {\rm ent}[\mu_{V}] -1 + |\Sigma| \right) \ \ &\textrm{if}\ \  \overline{\mathbf{P}} \in \mathcal{P}_{s,1}(\Sigma \times {\rm Config})\\
       \infty \ \ \ \ \ \ \  \ \ \ \ \ \ \ \ \ \ \ \ \ \ \ \ \ \ \ \ \ \ \ \ \ \ \ \ \ \ \ \ &\textrm{if}\ \  \overline{\mathbf{P}} \notin \mathcal{P}_{s,1}(\Sigma \times {\rm Config})
    \end{cases} 
\end{equation}
where
\begin{equation}
    \mathcal{J}( \overline{\mathbf{P}}) = 
    \begin{cases}
    \overline{\rm Ent}[ \overline{\mathbf{P}} | \overline{\mathbf{\Pi}^{1}}] \ \   \textrm{if} \ \ {\rm int}[ \overline{\mathbf{P}}^{x}]=\mu_{V}(x)\  {\rm a.e.}\\
    \infty \ \ \ \ \ \ \ \ \ \ \ \ \textrm{if not.}
    \end{cases}
\end{equation}

Furthermore, $\widetilde{K}_{N, \beta}$ satisfies
\begin{equation}
    \lim_{N \to \infty} \frac{\log \widetilde{K}_{N, \beta}}{N} \to - {\rm ent}[\mu_{V}].
\end{equation}
\end{proposition}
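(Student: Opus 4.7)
The plan is to mimic the proof of Theorem \ref{LDPbeta1N}, using the splitting formula for Riesz interactions stated just above together with an LDP for essentially non-interacting particles in $\Sigma^{N}$. After splitting,
\begin{equation*}
    \beta \mathcal{H}_{N}(X_{N}) = N^{2-\gamma} \mathcal{E}_{V}(\mu_{V}) + 2 N^{1-\gamma} \sum_{i=1}^{N} \zeta(x_{i}) + N^{2-\gamma} {\rm F}_{N}(X_{N}, \mu_{V}),
\end{equation*}
with $\zeta \ge 0$ and $\zeta = 0$ on $\Sigma$. Since $\gamma < 1$, the confinement factor $\exp(-2 N^{1-\gamma} \sum \zeta(x_{i}))$ acts at super-constant speed and effectively restricts points to $\Sigma$ at the LDP scale $N$; since $2 - \gamma > 1$, the interaction factor $\exp(-N^{2-\gamma} {\rm F}_{N})$ acts at super-linear speed in $N$, forcing ${\rm F}_{N} \to 0$, i.e., ${\rm emp}_{N} \to \mu_{V}$ and equivalently ${\rm int}[\overline{\mathbf{P}}^{x}] = \mu_{V}(x)$.

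For the upper bound I would argue as in the upper bound of Theorem \ref{LDPbeta1N}: if ${\rm int}[\overline{\mathbf{P}}^{x}] \neq \mu_{V}$, weak positive definiteness together with the lower semi-continuity tools of Section \ref{sect:comp} give $\liminf {\rm F}_{N} \ge \mathcal{E}(\rho - \mu_{V}) > 0$, so the super-linear factor forces the rate to be $+\infty$. If ${\rm int}[\overline{\mathbf{P}}^{x}] = \mu_{V}$, bound $\exp(-N^{2-\gamma} {\rm F}_{N}) \le 1$ and reduce the integral to the LDP for the tagged empirical field under the uniform product measure on $\Sigma^{N}$, which is the special case of Proposition \ref{LDPnoninteracting} obtained by taking $V \equiv 0$ on $\Sigma$ and yields the rate $\overline{\rm Ent}[\overline{\mathbf{P}} | \overline{\mathbf{\Pi}}^{1/|\Sigma|}]$. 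For the lower bound, I would construct a family of admissible microstates with empirical measure close to $\mu_{V}$, prescribed tagged empirical field, and ${\rm F}_{N} = o(N^{\gamma - 1})$ so that the interaction factor is subexponential at speed $N$; such a construction is available in \cite{leble2017large, hardin2018large, armstrong2021local}, where the analogous screening procedure was carried out for Riesz interactions.

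The partition function asymptotic $\frac{1}{N} \log \widetilde{K}_{N,\beta} \to -{\rm ent}[\mu_{V}]$ follows either by integrating the LDP (Varadhan) or directly by Laplace's principle: the dominant contribution comes from configurations in $\Sigma^{N}$ with empirical measure close to $\mu_{V}$, whose volume is approximately $|\Sigma|^{N} \exp(-N ({\rm ent}[\mu_{V}] + \log |\Sigma|)) = \exp(-N {\rm ent}[\mu_{V}])$ by Sanov's theorem relative to the uniform measure on $\Sigma$. Finally, the form of the rate function stated in the proposition comes from the Poisson entropy identity ${\rm Ent}[\mathbf{P}|\mathbf{\Pi}^{1}] = {\rm Ent}[\mathbf{P}|\mathbf{\Pi}^{\lambda}] + \lambda \log \lambda - \lambda + 1$, valid for stationary $\mathbf{P}$ of intensity $\lambda$, applied pointwise with $\lambda = \mu_{V}(x)$ and integrated over $\Sigma$: this converts the natural entropy with respect to $\overline{\mathbf{\Pi}}^{\mu_{V}}$ into $\overline{\rm Ent}[\overline{\mathbf{P}} | \overline{\mathbf{\Pi}}^{1}] - ({\rm ent}[\mu_{V}] - 1 + |\Sigma|)$, as claimed. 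The main technical obstacle is the microstate construction for the lower bound: since $g$ is singular at the origin for Riesz kernels, the proof of Proposition \ref{Prop:fund} does not apply verbatim, and one must import the screening/regularization machinery developed for Riesz gases in the low-temperature regime.
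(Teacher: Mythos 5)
Your overall structure — split the Hamiltonian around $\mu_V$, argue that the $N^{2-\gamma}$ weight kills the interaction term at speed $N$, reduce to a non-interacting reference measure, and convert entropies via the Poisson identity — matches the paper's strategy, and your entropy bookkeeping at the end is correct. However, there is a real gap in the upper-bound step.

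You write ``bound $\exp(-N^{2-\gamma}{\rm F}_N) \le 1$,'' which is false in the Riesz/Coulomb setting: $F_N(X_N,\mu_V)$ is not sign-definite. It equals $\mathcal{E}({\rm emp}_N - \mu_V)$ \emph{minus} the discarded diagonal $\frac{1}{N^2}\sum_i g(0)$, and for singular $g$ this subtraction can push $F_N$ below zero. The correct ingredient — and the one the paper uses — is the renormalized-energy lower bound from \cite{leble2017large}:
\begin{equation*}
  \liminf_{N\to\infty} N^{\frac{2s}{d}}\,{\rm F}_N(X_N,\mu_V) \ge \overline{\mathbb{W}}(\overline{\mathbf{P}},\mu_V) \qquad\text{(with the usual $\log N$ corrections in the log cases).}
\end{equation*}
This gives $F_N \gtrsim -N^{-2s/d}$, so that $N\beta\,\inf F_N = N^{1-\gamma}\inf F_N \to 0$ precisely because $\gamma > 1 - \frac{2s}{d}$. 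Without this, $\exp(-N^{2-\gamma}F_N)$ could in principle grow exponentially at speed $N$, and the upper bound does not close. The heuristic ``$2-\gamma>1$ forces $F_N\to 0$'' controls only the positive part of $F_N$; the lower bound on $F_N$ comes from the renormalized energy, and that is where the restriction $\gamma > \frac{d-2s}{d}$ actually enters — a point your argument never uses.

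Two smaller discrepancies. First, the confining factor $\exp(-2N^{1-\gamma}\sum_i\zeta(x_i))$ concentrates $\rho_N$ on $\omega=\{\zeta=0\}$, not on $\Sigma$ per se; the paper keeps $\omega$ distinct and introduces the constant $c_{\omega,\Sigma}=\log|\omega|-|\Sigma|+1$ to absorb this, whereas you implicitly identify the two. Second, for the non-interacting LDP you propose to invoke Proposition \ref{LDPnoninteracting} on $\Sigma\subset\mathbb{R}^d$, which is stated on the torus; the paper instead cites Propositions 4.1--4.2 of \cite{leble2017large} directly, which are formulated for exactly this setting and also supply the microstate construction for the lower bound that you correctly flag as the remaining technical obstacle.
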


\begin{proof}
We introduce the constant 
\begin{equation}
    \omega_{N} = \int_{\mathbb{R}^{d}} \exp \left( - 2 N \beta \zeta (x) \right) \, \mathrm d x,
\end{equation}
and also the probability measure 
\begin{equation}
    \rho_{N} (x) = \frac{1}{\omega_{N}} \exp \left( - 2 N \beta \zeta (x) \right).
\end{equation}
Note that the Gibbs measure may be rewritten, for any point configuration $X_{N} \in \mathbb{R}^{d \times N} $ as 
\begin{equation}
    \mathrm d \mathbf{P}_{N, \beta} (X_{N}) = \frac{1}{\widetilde{K}_{N, \beta}} \exp \left( - N^{2} \beta {\rm F}_{N}(X_{N}, \mu_{V}) + N\log \omega_{N} \right) \Pi_{i=1}^{N} \rho_{N}(x_{i}) \mathrm d X_{N}.
\end{equation}

We also introduce the constant 
\begin{equation}
    c_{\omega, \Sigma} = \log |\omega| - |\Sigma| + 1, 
\end{equation}
where
\begin{equation}
    \omega := \{ x \in \mathbb{R}^{d} | \zeta(x)=0 \}.
\end{equation}

Note that if $ \overline{\mathbf{P}}_{N} \to \overline{\mathbf{P}}$ and ${\rm int}[ \overline{\mathbf{P}}^{x}] = \mu_{V}$ a.e. does not hold, then 
\begin{equation}
    \liminf_{N \to \infty} {\rm F}_{N}(X_{N}, \mu_{V}) >0,
\end{equation}
(see Subsection \ref{sect:comp}) and therefore for $\epsilon$ small enough
\begin{equation}
    \lim_{N \to \infty} \frac{1}{N} \log \left( \mathbf{P}_{N, \beta} \left( \overline{\mathbf{P}}_{N} \in B( \overline{\mathbf{P}},\epsilon)  \right)\right) = - \infty.
\end{equation}

On the other hand, if ${\rm int}[ \overline{\mathbf{P}}^{x}] = \mu_{V}$ a.e. then by \cite{leble2017large} we have that, in the $1d$ log case
\begin{equation}
     \liminf_{N \to \infty} N {\rm F}_{N}(X_{N}, \mu_{V}) + {\log N} \geq \overline{\mathbb{W}}( \overline{\mathbf{P}}, \mu_{V}),
\end{equation}
in the $2d$ log case
\begin{equation}
     \liminf_{N \to \infty} N{\rm F}_{N}(X_{N}, \mu_{V}) + \frac{\log N}{2} \geq \overline{\mathbb{W}}( \overline{\mathbf{P}}, \mu_{V}),
\end{equation}
and in all other cases
\begin{equation}
     \liminf_{N \to \infty} N^{\frac{2s}{d}} {\rm F}_{N}(X_{N}, \mu_{V})  \geq \overline{\mathbb{W}}( \overline{\mathbf{P}}, \mu_{V}).
\end{equation}
See \cite{leble2017large} for a definition of $\overline{\mathbb{W}}$. 

Furthermore, by Proposition 4.1 of \cite{leble2017large},
\begin{equation}
    \lim_{\epsilon \to 0} \lim_{N \to \infty} \frac{1}{N} \log \left( \rho_{N}^{\otimes N} \left( \overline{\mathbf{P}}_{N} \in B( \overline{\mathbf{P}},\epsilon)  \right)\right) = -\overline{\rm Ent}[ \overline{\mathbf{P}} | \overline{\mathbf{\Pi}}^{1}]- c_{\omega, \Sigma}.
\end{equation}

Therefore, as long as $\overline{\mathbb{W}}( \overline{\mathbf{P}}, \mu_{V}) < \infty$ we have
\begin{equation}
    \begin{split}
        &\lim_{\epsilon \to 0} \lim_{N \to \infty} \frac{1}{N} \log \left( \mathbf{P}_{N, \beta} \left( \overline{\mathbf{P}}_{N} \in B( \overline{\mathbf{P}},\epsilon)  \right)\right)  \\
       \leq & \lim_{\epsilon \to 0} \lim_{N \to \infty} -\frac{1}{N}\log \left( \widetilde{K}_{N, \beta} \right) -  N \beta \inf_{X_{N}: \overline{\mathbf{P}}_{N} \in B( \overline{\mathbf{P}},\epsilon)}{\rm F}_{N}(X_{N}, \mu_{V}) \\
       &\ \ \ \ \ \ \ \ \ + \log \omega_{N} +  \frac{1}{N} \log \left( \rho_{N}^{\otimes N} \left( \overline{\mathbf{P}}_{N} \in B( \overline{\mathbf{P}},\epsilon) \right)\right)  \\
    \leq & \lim_{N \to \infty} - \frac{1}{N}\log \left( \widetilde{K}_{N, \beta} \right) - \mathcal{J}( \overline{\mathbf{P}}) + |\Sigma| - 1,
    \end{split}
\end{equation}
where we have used that if $N \beta \to \infty$ then
\begin{equation}
    \lim_{N \to \infty} \log \omega_{N} = \log |\omega|
\end{equation}
by Dominated Convergence Theorem. Similarly, if $\overline{\mathbb{W}}( \overline{\mathbf{P}}, \mu_{V}) < \infty$ then by Proposition 4.2 of \cite{leble2017large}, we have that 
\begin{equation}
        \lim_{\epsilon \to 0} \lim_{N \to \infty} \frac{1}{N} \log \left( \mathbf{P}_{N, \beta} \left( \overline{\mathbf{P}}_{N} \in B( \overline{\mathbf{P}},\epsilon) \right) \right) \geq \lim_{N \to \infty} - \frac{1}{N}\log \left( \widetilde{K}_{N, \beta} \right) - \mathcal{J}( \overline{\mathbf{P}}) + |\Sigma| - 1.
\end{equation}

From this, and since stationary tagged point processes $\overline{\mathbf{P}}$ such that $\overline{\mathbb{W}}( \overline{\mathbf{P}}, \mu_{V}) < \infty$ are dense in $\mathcal{P}_{s,1}(\Sigma \times {\rm Config})$ (see \cite{leble2017large}), we conclude that the push-forward of $\mathbf{P}_{N, \beta}$ by $ \overline{\mathbf{P}}_{N}$ satisfies an LDP at speed $N$ with (the non-trivial part of the) rate function given by   
\begin{equation}
    \mathcal{F}( \overline{\mathbf{P}}) = \mathcal{J}( \overline{\mathbf{P}}) - \inf_{\overline{\mathbf{P}}^* \in \mathcal{P}_{s,1}(\Sigma \times {\rm Config}) } \mathcal{J}( \overline{\mathbf{P}}^*),
\end{equation}
and that $\widetilde{K}_{N, \beta}$ satisfies
\begin{equation}
    \lim_{N \to \infty} \frac{\log \widetilde{K}_{N, \beta}}{N} \to |\Sigma|-1 - \inf_{\overline{\mathbf{P}}^* \in \mathcal{P}_{s,1}(\Sigma \times {\rm Config}) } \mathcal{J}( \overline{\mathbf{P}}^*).
\end{equation}

In order to conclude, we note that the minimum of $\mathcal{J}(\overline{\mathbf{P}})$ is achieved at $\overline{\mathbf{P}} = \overline{\mathbf{\Pi}}^{\mu_{V}}$. By \cite{leble2017large}, Lemma 4.4, we have that the minimum is given by 
\begin{equation}
\begin{split}
     \inf_{\overline{\mathbf{P}}^* \in \mathcal{P}_{s,1}(\Sigma \times {\rm Config}) } \mathcal{J}( \overline{\mathbf{P}}^*) &= \overline{\rm Ent}[ \overline{\mathbf{\Pi}}^{\mu_{V}} | \overline{\mathbf{\Pi}}^{1}]\\
     &= \int_{\Sigma} \mu_{V}(x)\log(\mu_{V}(x)) - (\mu_{V}(x) - 1)\, \mathrm d x\\
     &= {\rm ent}[\mu_{V}] -1 + |\Sigma|. 
\end{split}     
\end{equation}
\end{proof}

\begin{remark}
As mentioned before, it is not possible to guess the right rate function by starting from the LDP in \cite{leble2017large} and simply dropping the energy term. However, $\mathcal{J}$ is the pointwise limit of the rate functions as $\theta \to 0$. As mentioned before, $\overline{\mathbb{W}}( \overline{\mathbf{P}}, \mu_{V}) = \infty$ if it is not true that ${\rm int}[ \overline{\mathbf{P}}^{x}] = \mu_{V}(x)$ a.e. This implies that 
\begin{equation}
   \lim_{\theta \to 0} \theta \overline{\mathbb{W}}( \overline{\mathbf{P}}, \mu_{V}) + \overline{\rm Ent}[ \overline{\mathbf{P}} | \overline{\mathbf{\Pi}}^{1}] = \mathcal{J}( \overline{\mathbf{P}})
\end{equation}
for all $ \overline{\mathbf{P}} \in \mathcal{P}_{s}(\Sigma \times {\rm Config})$.
\end{remark}

\section{Acknowledgements}

DPG acknowledges support by the German Research Foundation (DFG) via the research unit FOR 3013 “Vector- and tensor-valued surface PDEs” (grant no. NE2138/3-1). {We thank Sylvia Serfaty, Ed Saff, and Gaultier Lambert for useful conversations}.\\

\textbf{Data Availibility Statement:} All data generated or analysed during this study are included in this published article.

\section*{Declarations}

\textbf{Conflict of interest:} This work has no conflicts of interest.

\bibliographystyle{plain}
\bibliography{bibliography.bib}

\begin{thebibliography}{10}

\bibitem{ambrosio2005gradient}
Luigi Ambrosio, Nicola Gigli, and Giuseppe Savare.
\newblock Gradient flows in metric spaces and in the wasserstein space of probability measures.
\newblock {\em Lectures in Mathematics, ETH Zurich, Birkh{\"a}user}, 2005.

\bibitem{armstrong2021local}
Scott Armstrong and Sylvia Serfaty.
\newblock Local laws and rigidity for coulomb gases at any temperature.
\newblock {\em The Annals of Probability}, 49(1):46--121, 2021.

\bibitem{armstrong2022thermal}
Scott Armstrong and Sylvia Serfaty.
\newblock Thermal approximation of the equilibrium measure and obstacle problem.
\newblock In {\em Annales de la Facult{\'e} des sciences de Toulouse: Math{\'e}matiques}, volume~31, pages 1085--1110, 2022.

\bibitem{arous1997large}
G~Ben Arous and Alice Guionnet.
\newblock Large deviations for wigner's law and voiculescu's non-commutative entropy.
\newblock {\em Probability theory and related fields}, 108(4):517--542, 1997.

\bibitem{berman2014determinantal}
Robert~J Berman.
\newblock Determinantal point processes and fermions on complex manifolds: large deviations and bosonization.
\newblock {\em Communications in Mathematical Physics}, 327(1):1--47, 2014.

\bibitem{borodachov2019discrete}
Sergiy~V Borodachov, Douglas~P Hardin, and Edward~B Saff.
\newblock {\em Discrete energy on rectifiable sets}.
\newblock Springer, 2019.

\bibitem{bourgade2014universality}
Paul Bourgade, L{\'a}szl{\'o} Erd{\H{o}}s, and Horng-Tzer Yau.
\newblock Universality of general $\beta$-ensembles.
\newblock {\em Duke Mathematical Journal}, 163(6):1127--1190, 2014.

\bibitem{bourgade2014local}
Paul Bourgade, Horng-Tzer Yau, and Jun Yin.
\newblock Local circular law for random matrices.
\newblock {\em Probability Theory and Related Fields}, 159(3):545--595, 2014.

\bibitem{chafai2014first}
Djalil Chafa{\"\i}, Nathael Gozlan, and Pierre-Andr{\'e} Zitt.
\newblock First-order global asymptotics for confined particles with singular pair repulsion.
\newblock {\em The Annals of Applied Probability}, 24(6):2371--2413, 2014.

\bibitem{dembo2009large}
Amir Dembo.
\newblock {\em Large deviations techniques and applications}.
\newblock Springer, 2009.

\bibitem{friedli2017statistical}
Sacha Friedli and Yvan Velenik.
\newblock {\em Statistical mechanics of lattice systems: a concrete mathematical introduction}.
\newblock Cambridge University Press, 2017.

\bibitem{garcia2019large}
David Garc{\'\i}a-Zelada.
\newblock A large deviation principle for empirical measures on polish spaces: Application to singular gibbs measures on manifolds.
\newblock In {\em Annales de l'Institut Henri Poincar{\'e}, Probabilit{\'e}s et Statistiques}, volume~55, pages 1377--1401. Institut Henri Poincar{\'e}, 2019.

\bibitem{garcia2024generalized}
David Garc{\'\i}a-Zelada and David Padilla-Garza.
\newblock Generalized transport inequalities and concentration bounds for riesz-type gases.
\newblock {\em Electronic Journal of Probability}, 29:1--35, 2024.

\bibitem{georgii2011gibbs}
Hans-Otto Georgii.
\newblock Gibbs measures and phase transitions.
\newblock In {\em Gibbs Measures and Phase Transitions}. de Gruyter, 2011.

\bibitem{georgii1993large}
Hans-Otto Georgii and Hans Zessin.
\newblock Large deviations and the maximum entropy principle for marked point random fields.
\newblock {\em Probability theory and related fields}, 96(2):177--204, 1993.

\bibitem{girvin2005introduction}
Steven~M Girvin.
\newblock Introduction to the fractional quantum hall effect.
\newblock In {\em The Quantum Hall Effect}, pages 133--162. Springer, 2005.

\bibitem{hardin2018large}
Douglas~P Hardin, Thomas Lebl{\'e}, Edward~B Saff, and Sylvia Serfaty.
\newblock Large deviation principles for hypersingular riesz gases.
\newblock {\em Constructive Approximation}, 48(1):61--100, 2018.

\bibitem{hardy2021clt}
Adrien Hardy and Gaultier Lambert.
\newblock Clt for circular beta-ensembles at high temperature.
\newblock {\em Journal of Functional Analysis}, 280(7):108869, 2021.

\bibitem{jancovici1993large}
B~Jancovici, Joel~L Lebowitz, and G~Manificat.
\newblock Large charge fluctuations in classical coulomb systems.
\newblock {\em Journal of statistical physics}, 72(3):773--787, 1993.

\bibitem{lambert2021poisson}
Gaultier Lambert.
\newblock Poisson statistics for gibbs measures at high temperature.
\newblock In {\em Annales de l'Institut Henri Poincar{\'e}, Probabilit{\'e}s et Statistiques}, volume~57, pages 326--350. Institut Henri Poincar{\'e}, 2021.

\bibitem{lambert2019quantitative}
Gaultier Lambert, Michel Ledoux, and Christian Webb.
\newblock Quantitative normal approximation of linear statistics of $\beta$-ensembles.
\newblock {\em The Annals of Probability}, 47(5):2619--2685, 2019.

\bibitem{leble2017local}
Thomas Lebl{\'e}.
\newblock Local microscopic behavior for 2d coulomb gases.
\newblock {\em Probability Theory and Related Fields}, 169(3):931--976, 2017.

\bibitem{leble2017large}
Thomas Lebl{\'e} and Sylvia Serfaty.
\newblock Large deviation principle for empirical fields of log and riesz gases.
\newblock {\em Inventiones mathematicae}, 210(3):645--757, 2017.

\bibitem{leble2017largetwo}
Thomas Lebl{\'e}, Sylvia Serfaty, and Ofer Zeitouni.
\newblock Large deviations for the two-dimensional two-component plasma.
\newblock {\em Communications in Mathematical Physics}, 350(1):301--360, 2017.

\bibitem{neri2004statistical}
Cassio Neri.
\newblock Statistical mechanics of the n-point vortex system with random intensities on a bounded domain.
\newblock {\em Annales de l'Institut Henri Poincar{\'e} C}, 21(3):381--399, 2004.

\bibitem{padilla2024large}
David Padilla-Garza.
\newblock Large deviations principles for coulomb gases at intermediate temperature regime.
\newblock {\em Electronic Journal of Probability}, 29:1--42, 2024.

\bibitem{petz1998logarithmic}
D{\'e}nes Petz and Fumio Hiai.
\newblock Logarithmic energy as an entropy functional.
\newblock {\em Contemporary Mathematics}, 217:205--221, 1998.

\bibitem{rotskoff2018parameters}
Grant Rotskoff and Eric Vanden-Eijnden.
\newblock Parameters as interacting particles: long time convergence and asymptotic error scaling of neural networks.
\newblock {\em Advances in neural information processing systems}, 31, 2018.

\bibitem{rotskoff2022trainability}
Grant Rotskoff and Eric Vanden-Eijnden.
\newblock Trainability and accuracy of artificial neural networks: An interacting particle system approach.
\newblock {\em Communications on Pure and Applied Mathematics}, 75(9):1889--1935, 2022.

\bibitem{rotskoff2018neural}
Grant~M Rotskoff and Eric Vanden-Eijnden.
\newblock Neural networks as interacting particle systems: Asymptotic convexity of the loss landscape and universal scaling of the approximation error.
\newblock {\em stat}, 1050:22, 2018.

\bibitem{rougerie2014quantum}
Nicolas Rougerie, Sylvia Serfaty, and Jakob Yngvason.
\newblock Quantum hall phases and plasma analogy in rotating trapped bose gases.
\newblock {\em Journal of Statistical Physics}, 154(1):2--50, 2014.

\bibitem{rougerie2015incompressibility}
Nicolas Rougerie and Jakob Yngvason.
\newblock Incompressibility estimates for the laughlin phase.
\newblock {\em Communications in Mathematical Physics}, 336(3):1109--1140, 2015.

\bibitem{ruelle1967variational}
David Ruelle.
\newblock A variational formulation of equilibrium statistical mechanics and the gibbs phase rule.
\newblock {\em Communications in Mathematical Physics}, 5:324--329, 1967.

\bibitem{ruelle1968statistical}
David Ruelle.
\newblock Statistical mechanics of a one-dimensional lattice gas.
\newblock {\em Communications in Mathematical Physics}, 9:267--278, 1968.

\bibitem{serfaty2015coulomb}
Sylvia Serfaty.
\newblock {\em Coulomb gases and Ginzburg--Landau vortices}.
\newblock 2015.

\bibitem{stormer1999fractional}
Horst~L Stormer, Daniel~C Tsui, and Arthur~C Gossard.
\newblock The fractional quantum hall effect.
\newblock {\em Reviews of Modern Physics}, 71(2):S298, 1999.

\bibitem{wen2024coupling}
Yuxiao Wen, Eric Vanden-Eijnden, and Benjamin Peherstorfer.
\newblock Coupling parameter and particle dynamics for adaptive sampling in neural {G}alerkin schemes.
\newblock {\em Physica D: Nonlinear Phenomena}, page 134129, 2024.

\end{thebibliography}

\end{document}